\documentclass[a4paper]{elsarticle}

\usepackage{tikz-cd, tikz, graphicx}
\usepackage{lipsum}
\usepackage{amsmath,amsfonts,amsthm}
\usepackage{algorithmic}
\usepackage{algorithm}
\usepackage{array}
\usepackage[caption=false,font=normalsize,labelfont=sf,textfont=sf]{subfig}
\usepackage{textcomp}
\usepackage{stfloats}
\usepackage{url}
\usepackage{verbatim}
\usepackage{graphicx}
\usepackage{amssymb}
\usepackage{mwe}
\usepackage{booktabs}
\usetikzlibrary{shapes,decorations.pathreplacing,decorations.pathmorphing,calc}

\newtheorem{theorem}{Theorem}[section]
\newtheorem{lemma}[theorem]{Lemma}
\newtheorem{Corollary}[theorem]{Corollary}
\newtheorem{Proposition}[theorem]{Proposition}

\theoremstyle{definition}
\newtheorem{Remark}[theorem]{Remark}
\newtheorem{Example}[theorem]{Example}
\newtheorem{Definition}[theorem]{Definition}
\begin{document}
\begin{frontmatter}

\title{Commutative Distance-Regularity: Algebraic Hierarchies and Cayley Graph Constructions}
\address[AO]{Centro de Investigaciones en Ciencias Exactas y Naturales (CIEM), Universidad Nacional de Córdoba; Argentina.}
\address[AP1]{Instituto de Matem\'atica Aplicada San Luis, Universidad Nacional de San Luis and CONICET, San Luis, Argentina.}
\address[AP2]{Departamento de Matem\'atica, Universidad Nacional de San Luis, San Luis, Argentina.}

\author[AO]{Agust\'in Orom\'i}
\author[AP1,AP2]{Adri\'an Pastine}

\begin{abstract}
This paper resolves several open questions regarding the algebraic and metric inheritance of distance-based regularities under graph products, with a particular focus on Commutative Distance Degree-Regular (CDDR) graphs. We characterize the behavior of CDDR, distance mean-regular (DMR), and distance degree-regular (DDR) graphs under the strong and direct (tensor) products. To address the non-closure of classical regularities under the direct product, we introduce and study a novel, broader family of graphs: \textit{Distance Walk Regular (DWR)} graphs. Furthermore, we relax the global commutativity condition by defining and structuralizing the class of $i$-CDDR graphs, providing deep characterizations within the realm of Cayley graphs. By developing systematic lifting operations, we construct infinite families of graphs with arbitrarily large order and diameter within specific structural intersections, effectively settling topological gaps in the current regularity hierarchy.
\end{abstract}

\begin{keyword}
Distance regularity \sep
Graph products \sep
Distance matrices \sep
Cayley graphs \sep
Association schemes \sep
Simultaneous diagonalization

\textit{MSC 2020:} 05C50, 05C12, 05C75, 05E30
\end{keyword}

\end{frontmatter}
\section{Introduction}
All graphs considered in this manuscript are assumed to be simple, finite, and connected. Given a graph $G$, we write $V(G)$ for its vertex set and $E(G)$ for its edge set. Two vertices $u, v \in V(G)$ are \emph{adjacent}, written $u \sim v$, whenever $uv \in E(G)$. The \emph{neighborhood} of a vertex $v \in V(G)$ is $\Gamma(v) = \{u \in V(G) : uv \in E(G)\}$. For $u, v \in V(G)$, the \emph{distance} $dist(u,v)$ is the length of a shortest $u$-$v$ path in $G$, and the \emph{diameter} $diam(G)$ is $\max_{u,v \in V(G)} dist(u,v)$.

Recall that the \emph{adjacency matrix} $A_G$ of $G$ has entries
\begin{equation*}
    (A_G)_{uv} =
    \begin{cases}
        1 & \text{if } uv \in E(G), \\
        0 & \text{otherwise,}
    \end{cases}
\end{equation*}
for $u, v \in V(G)$. More generally, for each integer $i \geq 0$, the \emph{$i$-th distance matrix} $A_i(G)$ is given by
\begin{equation*}
    (A_i(G))_{uv} =
    \begin{cases}
        1 & \text{if } dist(u, v) = i, \\
        0 & \text{otherwise.}
    \end{cases}
\end{equation*}
Equivalently, $A_i(G)$ is the adjacency matrix of the $i$-th distance graph $G_i$, in which two vertices are joined precisely when they lie at distance $i$ in $G$. In particular, $A_0(G) = I$ and $A_1(G) = A_G$, while $A_i(G) = 0$ for every $i > diam(G)$. The collection $\mathcal{A}_G = \{A_0(G), A_1(G), \dots, A_{diam(G)}(G)\}$ is a linearly independent set whose sum equals $J$, the all-ones matrix.

The spectrum of a graph $G$, denoted by $\operatorname{Spec}(G)$, is defined as the multiset of eigenvalues of its adjacency matrix $A_1(G)$. Since $A_1(G)$ is a real symmetric matrix, all its eigenvalues are real, and we denote them as $\lambda_1 \geq \lambda_2 \geq \dots \geq \lambda_n$. Frequently, we express the spectrum as $\operatorname{Spec}(G) = \{\lambda_1^{m_1}, \lambda_2^{m_2}, \dots, \lambda_k^{m_k}\}$, where the exponent $m_j$ denotes the algebraic multiplicity of the eigenvalue $\lambda_j$.

Furthermore, throughout this work, we denote by $G_i$ the $i$-th distance graph of $G$, for $1 \leq i \leq \operatorname{diam}(G)$. Formally, $G_i$ is defined as the graph with the same vertex set as $G$, where two vertices are adjacent if and only if their distance in $G$ is exactly $i$. Consequently, the adjacency matrix of $G_i$, denoted $A_1(G_i)$, is identically the $i$-th distance matrix of the original graph $G$, i.e., $A_1(G_i) = A_i(G)$. This notation allows us to treat the distance shells of $G$ as standard adjacency relationships, facilitating the spectral analysis of commutative distance-regularity.

Several well-known graph matrices---including the distance matrix, the Harary matrix, and the generalized adjacency-distance matrix of \cite{PasRoj2023}---arise as linear combinations of the elements of $\mathcal{A}_G$. When $G$ is regular, this linear-combination representation extends further, encompassing the Laplacian, normalized Laplacian, Randi\'c, and signless Laplacian matrices as well.

Let $G$ and $H$ be two graphs. The concept of graph products refers to various methods of combining two or more graphs to create a new graph. These products provide different structures and properties, which are highly useful for various applications in algebraic graph theory.

In this work, we study how regularity properties behave under graph products. We focus primarily on two fundamental operations:

The \textit{Strong Product} of two Graphs G and H, is a graph defined as follows: The vertex set of the graph $G\boxtimes H$ is the Cartesian product of the vertex sets of $G$ and $H$. Thus, the vertices are ordered pairs $(g, h)$, where $g \in V(G)$ and
$h \in V(H)$. Furthermore, two vertices $(g_1, h_1)$ and $(g_2, h_2)$ are adjacent in $G\boxtimes H$ if and only if one of the following conditions
is satisfied: $g_1 \sim g_2$ in $G$ and $h_1 = h_2$ , $g_1 = g_2$ and $h_1 \sim h_2$ in $H$ or  $g_1 \sim g_2$ in $G$ and $h_1 \sim h_2$ in $H$.
    
The \textit{Direct Product} (or Tensor Product) of two graphs $G$ and $H$, denoted as $G \times H$, is a new graph constructed as follows: the vertices
of the graph $G\times H$ are the pairs $(g, h)$, where $g \in  V(G)$ and $h \in V(H)$. Two vertices $(g_1, h_1)$ and $(g_2, h_2)$ are connected by
an edge in $G\times H$ if  $g_1 \sim g_2$ in $G$ and $h_1 \sim h_2$ in $H$.

The \textit{Cartesian product} $G \square H$ has vertex set $V(G) \times V(H)$, and two vertices $(g_1, h_1)$ and $(g_2, h_2)$ are adjacent whenever $g_1 = g_2$ and $h_1 \sim h_2$ in $H$, or $h_1 = h_2$ and $g_1 \sim g_2$ in $G$. This product has been extensively employed by Conde et al.\ \cite{conde2026}.

The Lexicographic Product of two graphs $G$ and $H$, denoted as $G[H]$, has the vertex set $V(G) \times V(H)$. Two vertices $(g_1, h_1)$ and $(g_2, h_2)$ are adjacent in $G[H]$ if and only if either $g_1 \sim g_2$ in $G$, or $g_1 = g_2$ and $h_1 \sim h_2$ in $H$. The distance and spectral properties of this product have been recently explored by Conde et al. \cite{conde2026}.

For a comprehensive understanding of the properties that are preserved when performing the product of two graphs, we recommend the reader to refer to \cite{HaImKl2011}.

A complete graph, $K_n$, is the graph such that every pair of vertices is adjacent. The cycle $C_n$ is the graph with vertex set $\{v_1, v_2, \dots, v_n\}$, where $v_i \sim v_{i+1}$ for $i = 1, \dots, n-1$ and $v_n \sim v_1$. 

Let $H$ be a finite group and let $S \subseteq H$ be a generating set such that the identity $e \notin S$ and $S = S^{-1}$. The \textit{Cayley graph} $Cay(H, S)$ is defined as the graph with vertex set $H$, where two vertices $x, y \in H$ are adjacent if and only if $yx^{-1} \in S$. A graph is called \textit{circulant} if it is a Cayley graph over a cyclic group. Equivalently, $G$ is circulant if there exists an automorphism of $G$ that acts as a cyclic permutation of all its vertices, or if its adjacency matrix $A_G$ is a circulant matrix. Cycles and complete graphs are classical examples of circulant graphs.

Finally, we recall the definition of the Generalized Petersen graph. For integers $n \geq 3$ and $1 \leq k < n/2$, the \textit{Generalized Petersen graph} $GP(n,k)$ has vertex set $\{u_0, \dots, u_{n-1}\} \cup \{v_0, \dots, v_{n-1}\}$ and edge set consisting of $u_i \sim u_{i+1}$, $u_i \sim v_i$, and $v_i \sim v_{i+k}$ for all $0 \leq i \leq n-1$ (with indices taken modulo $n$).

Two $n \times n$ matrices $A$ and $B$ are \emph{simultaneously diagonalizable} if a single invertible matrix $S$ exists such that $S^{-1}AS$ and $S^{-1}BS$ are both diagonal. The following classical result characterizes this situation in terms of commutativity.

\begin{lemma}\cite[5F]{Strang1988}
    Two diagonalizable matrices $A$ and $B$ share a common eigenvector basis if and only if $AB = BA$.
\end{lemma}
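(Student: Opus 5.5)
The plan is to prove the two implications separately, working with eigenspace decompositions. Let $A$ and $B$ be diagonalizable $n\times n$ matrices over $\mathbb{C}$ (or $\mathbb{R}$, when all eigenvalues are real, as for the symmetric distance matrices $A_i(G)$ used later).

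\emph{Necessity.} Suppose $A$ and $B$ share a basis of eigenvectors. Let $S$ be the invertible matrix whose columns form that basis. Then $S^{-1}AS=D_1$ and $S^{-1}BS=D_2$ are both diagonal. Diagonal matrices commute, so
\begin{equation*}
AB=SD_1S^{-1}SD_2S^{-1}=SD_1D_2S^{-1}=SD_2D_1S^{-1}=BA.
\end{equation*}
This direction is routine.

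\emph{Sufficiency.} Assume $AB=BA$. Because $A$ is diagonalizable, we can decompose $\mathbb{C}^n=\bigoplus_{\lambda}E_\lambda$ into the eigenspaces $E_\lambda=\ker(A-\lambda I)$ of $A$. The first key step is that each $E_\lambda$ is $B$-invariant. Indeed, if $Av=\lambda v$, then $A(Bv)=BAv=\lambda Bv$, so $Bv\in E_\lambda$.

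The second step, which I expect to be the main obstacle, is to show that the restriction $B|_{E_\lambda}$ is itself diagonalizable. I would use the minimal polynomial criterion. A matrix is diagonalizable if and only if its minimal polynomial $m_B$ splits into distinct linear factors. Since $m_B(B)=0$, we also have $m_B(B|_{E_\lambda})=0$. Hence the minimal polynomial of $B|_{E_\lambda}$ divides $m_B$, so it too has distinct linear factors, and $B|_{E_\lambda}$ is diagonalizable. In the symmetric case there is a simpler route: $B|_{E_\lambda}$ is self-adjoint on $E_\lambda$ with the inherited inner product, so the spectral theorem applies directly and even gives an orthonormal basis.

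Finally, in each $E_\lambda$ we choose a basis of eigenvectors of $B|_{E_\lambda}$. Each of these vectors is an eigenvector of $B$, and since it lies in $E_\lambda$ it is also an eigenvector of $A$. The union of these bases over all $\lambda$ is a basis of $\mathbb{C}^n$, because the sum of the $E_\lambda$ is direct and fills $\mathbb{C}^n$. This union is the required common eigenvector basis. Taking $S$ to be the matrix with these vectors as columns makes both $S^{-1}AS$ and $S^{-1}BS$ diagonal, which is simultaneous diagonalization.
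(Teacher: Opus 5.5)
Your proof is correct and complete. Note that the paper gives no proof of this lemma at all; it simply cites Strang [5F]. So the only meaningful comparison is with the argument behind that citation.

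In the cited textbook, the converse is handled by exactly your invariance step: $Av=\lambda v$ gives $A(Bv)=\lambda Bv$. It then assumes, for convenience, that $A$ has distinct eigenvalues. Each eigenspace is then one-dimensional, so $Bv$ must be a multiple of $v$. The repeated-eigenvalue case is only remarked to be longer.

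Your second step closes precisely that case. You show that $B|_{E_\lambda}$ is diagonalizable because its minimal polynomial divides $m_B$, which has distinct linear factors. This is the case the paper actually needs. The distance matrices it applies the lemma to typically have repeated eigenvalues. Indeed, the DP criterion it imports from Conde et al.\ is entirely about how the $A_i$ act on eigenspaces of $A_1$ of dimension greater than one.

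Your alternative route via the spectral theorem for $B|_{E_\lambda}$ is also apt here. Every $A_i(G)$ is real symmetric, so that route even produces an orthonormal common eigenbasis.

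One small point is worth adding. The paper invokes the lemma for the whole commuting family $A_0,\dots,A_d$, not just for a pair. Your restriction argument iterates directly to cover this. First restrict the family to each eigenspace of $A_1$, then to each joint eigenspace of $A_1,A_2$, and so on. This yields simultaneous diagonalization of any finite commuting family of diagonalizable matrices, which is the form in which the result is actually used.
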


This work is heavily motivated by the open questions recently posed by Conde et al. \cite{conde2026}. In their concluding remarks, the authors highlighted that, from an algebraic perspective, it remained unclear whether the class of CDDR graphs is closed under standard graph operations, such as the strong product or the tensor (direct) product. Furthermore, they pointed out the potential for refining the current regularity hierarchy by identifying proper subclasses or superclasses of CDDR graphs involving relaxed commutativity conditions. 

To address these open problems, the remainder of this article is organized as follows. In Section 2, we introduce the necessary preliminaries and notations. In Section 3, we investigate the strong product of these graphs, determining which algebraic and metric regularities are preserved. In Section 4, we extend several key results from Conde et al. concerning the Cartesian product, deriving new sharpness bounds for spectral determination. In Section 5, we analyze the direct product; to resolve the non-closure of classical regularities under this operation, we introduce the novel family of Distance Walk-Regular (DWR) graphs and prove its structural properties. Finally, we explore the question of relaxed commutativity by defining $i$-CDDR graphs, and we extend these deep structural characterizations to the realm of Cayley graphs in the concluding sections.

\section{Preliminaries}

In order to set the theoretical foundation for this manuscript, we review key concepts and families of graphs that are central to our study. We focus specifically on graphs characterized by distance-based structural symmetries and algebraic properties of their distance matrices. Let us first define the standard terminology used throughout this text.

Consider a graph $G$ having a vertex set $V(G)$ and an edge set $E(G)$. For any integer $1 \leq i \leq \operatorname{diam}(G)$, let $\Gamma_i(u)$ represent the subset of vertices located at an exact distance $i$ from a given vertex $u$. We define the following notable graph classes:

\begin{enumerate}
    \item $G$ is classified as \textit{distance-regular} (DR) if the intersection size $|\Gamma_i(u) \cap \Gamma_j(v)|$ is completely determined by $i$, $j$, and the distance $d(u, v)$, remaining invariant regardless of the specific choice of vertices $u, v \in V(G)$ \cite{BrCoNe1989}.
    \item $G$ is known as \textit{distance degree-regular} (DDR) (or super-regular) if the size of the distance neighborhoods satisfies $|\Gamma_i(u)| = |\Gamma_i(v)|$ for every pair of vertices $u, v \in V(G)$ and all $1 \le i \le \operatorname{diam}(G)$ \cite{BlQuKe1981,HiNo1984,Weichsel1982}.
    \item $G$ is termed \textit{distance mean-regular} (DMR) if the average intersection size over the $h$-th neighborhood, given by
    $$\frac{1}{|\Gamma_h(u)|} \sum_{v \in \Gamma_h(u)} |\Gamma_i(u) \cap \Gamma_j(v)|,$$
    depends solely on the parameters $h$, $i$, and $j$. This average must be independent of the initial vertex $u \in V(G)$ for any $1 \le i, j \le \operatorname{diam}(G)$ \cite{HiNo1984}.
    \item $G$ belongs to the \textit{distance-polynomial} (DP) class if every distance matrix $A_i$ can be expressed as a polynomial evaluated at the adjacency matrix $A_1$, for $i = 1, \dots , \operatorname{diam}(G)$ \cite{Weichsel1982}.
    \item $G$ is defined as \textit{commutative distance degree-regular} (CDDR) if, for every pair of vertices $u, v \in V(G)$ and any distances $1 \leq i, j \leq \operatorname{diam}(G)$, the intersection sizes are symmetric:
    $|\Gamma_i(u) \cap \Gamma_j(v)| = |\Gamma_i(v) \cap \Gamma_j(u)|$ \cite{conde2026}.
\end{enumerate}

Let $\mathcal{A}_G=\{A_0,A_1,\dots,A_d\}$ denote the set of distance matrices of a CDDR graph $G$. Because the matrices in this family mutually commute, they admit a simultaneous diagonalization (as shown in \cite{conde2026}). Thus, there exists a non-singular matrix $S$ such that
$$S^{-1}A_iS=D_i \quad \text{for all } i\in\{0,1,\dots,d\},$$
where each $D_i$ is a diagonal matrix. From this point forward, we denote the diagonal entries of $D_i$ as $(D_i)_{jj}=\lambda_j^{(i)}$.

A comparative framework for several of these classes was provided by Diego and Fiol in \cite{DiFi2017}, where they established the strict inclusions:
\begin{equation*}
    \{\text{DR}\} \subsetneq \{\text{DMR}\} \subsetneq \{\text{DDR}\}.
\end{equation*}
Subsequently, Conde et al. \cite{conde2026} expanded this taxonomy by analyzing the commutativity property, leading to the hierarchy:
\begin{equation*}
    \{\text{DR}\} \subsetneq \{\text{DP}\} \subsetneq \{\text{CDDR}\} \subsetneq \{\text{DDR}\}.
\end{equation*}
They additionally proved that the classes $\{\text{DMR}\}$ and $\{\text{CDDR}\}$ intersect non-trivially but are not strictly contained within one another, highlighting a symmetric difference between the two sets.

To further analyze DMR graphs, we rely on the algebraic characterizations introduced in \cite{DiFi2017}. We first adopt their notation: for any vertex $u \in V(G)$, let $\omega_{ij}(u)$ denote the number of edges connecting a vertex located at distance $i$ from $u$ to another vertex at distance $j$ from $u$. Formally:
\begin{equation*}
    \omega_{ij}(u) = |\{vw \in E(G) : d(u, v) =i, d(u, w) = j\}|.
\end{equation*}

\begin{Proposition}\cite[Proposition~4.2]{DiFi2017}\label{lem: caracterización de DMR}
    Let $G$ be a DDR graph. Then $G$ is a DMR graph if and only if the value of $\omega_{ii}(u)$ is a constant with respect to $u$, for every $i = 1, \dots, \operatorname{diam}(G)$ and $u \in V(G)$.
\end{Proposition}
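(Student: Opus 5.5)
Fix a vertex $u$ and indices $h,i,j$. Write $k_h=|\Gamma_h(u)|$; since $G$ is DDR this does not depend on $u$. Set
$$S_{hij}(u)=\sum_{v\in\Gamma_h(u)}|\Gamma_i(u)\cap\Gamma_j(v)| .$$
DMR says exactly that $S_{hij}(u)/k_h$ is independent of $u$, and since $k_h$ is constant this is the same as $S_{hij}(u)$ being independent of $u$. The plan is to specialize to $h=1$ and relate $S_{1ij}(u)$ to the edge counts $\omega_{ab}(u)$. After that, both directions should follow from the triangle inequality together with degree counting.

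\emph{Reduction to edge counts.} Double counting gives
$$S_{1ij}(u)=\#\{(v,w): v\sim u,\ d(u,w)=i,\ d(v,w)=j\}.$$
For $v\sim u$ we have $|d(v,w)-d(u,w)|\le 1$, so only $j\in\{i-1,i,i+1\}$ contribute. I will express these counts through edges in the distance partition from $u$, taking the point of view of $w$.

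For the reverse direction (constant $\omega_{ii}\Rightarrow$ DMR), I will use three ingredients.
\begin{itemize}
\item The regular degree $k$ splits as $k|\Gamma_i(u)|=\omega_{i,i-1}(u)+2\omega_{ii}(u)+\omega_{i,i+1}(u)$.
\item The counts $\omega_{i-1,i}(u)$ are determined recursively: $\omega_{01}(u)=k$, and each step is fixed by the DDR sizes $k_i$ together with the $\omega_{ii}(u)$.
\item From these it follows that the mean parameters $a_i=2\omega_{ii}/k_i$, $b_i=\omega_{i,i+1}/k_i$ and $c_i=\omega_{i,i-1}/k_i$ (averaged over $\Gamma_i(u)$) are constant.
\end{itemize}
The identity $\sum_{v\sim u}|\Gamma_i(u)\cap\Gamma_{i\pm1}(v)|$ is then a combination of these averaged parameters. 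That gives constancy for $h=1$.

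For general $h$ I would argue by induction on $h$. The idea is to write $A_1A_h=\omega$-type combinations of $A_{h-1},A_h,A_{h+1}$ on the row of $u$. I expect to use the identity
$$\sum_{v\in\Gamma_h(u)}|\Gamma_i(u)\cap\Gamma_j(v)|=(A_hA_jA_i)_{uu}\text{-type diagonal entry},$$
namely $(A_iA_jA_h)_{uu}$, and to reduce it via the trace-like symmetry $(A_hA_jA_i)_{uu}=\sum_w (A_h)_{uw}(A_jA_i)_{wu}$.

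\emph{Forward direction.} If $G$ is DMR, take $h=1$ and $i=j$. Then $S_{1ii}(u)$ counts pairs $(v,w)$ with $v\sim u$ and $d(u,w)=d(v,w)=i$. Summing the same quantity over $w$ by layers gives the averaged $a_i$-type count, which is $2\omega_{ii}(u)$ plus boundary terms already fixed by the DDR degrees. Solving for $\omega_{ii}(u)$ then shows it is constant.

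\emph{Main obstacle.} I expect the hardest step to be the induction from $h=1$ to general $h$. The diagonal entries $(A_iA_jA_h)_{uu}$ are not automatically determined by layer edge counts around $u$. There I would first try the symmetric-sum identity $(A_iA_jA_h)_{uu}=(A_hA_jA_i)_{uu}$ together with DDR, and if that fails I would fall back on citing \cite[Proposition~4.2]{DiFi2017}.
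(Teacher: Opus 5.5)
The paper gives no proof of this statement; it is quoted from \cite{DiFi2017}. So the question is whether your argument stands on its own, and it does not. Write $T_u(h,j,i)=(A_hA_jA_i)_{uu}$ for the number of pairs $(v,w)$ with $d(u,v)=h$, $d(v,w)=j$, $d(w,u)=i$.

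\textbf{The step that fails.} The edge counts of $u$'s distance partition are exactly the slice with middle index $1$: $T_u(h,1,i)=\omega_{hi}(u)$ for $h\neq i$, and $T_u(i,1,i)=2\omega_{ii}(u)$. Your $S_{1ij}(u)$ is $T_u(1,j,i)$, with the $1$ in an outer slot, and that is a different quantity. Concretely:
\begin{itemize}
\item $S_{1ii}(u)=\sum_{w\in\Gamma_i(u)}|\Gamma_1(u)\cap\Gamma_i(w)|$ and $2\omega_{ii}(u)=\sum_{w\in\Gamma_i(u)}|\Gamma_1(w)\cap\Gamma_i(u)|$ are the row sum and the column sum of $A_1A_i\circ A_i$.
\item Likewise $\sum_{v\sim u}|\Gamma_i(u)\cap\Gamma_{i+1}(v)|$ and $\omega_{i,i+1}(u)$ are the row sum and the column sum of $A_1A_{i+1}\circ A_i$.
\end{itemize}
Row and column sums agree after summing over all $u$ (a trace identity), but not vertexwise. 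So your forward step (``$S_{1ii}$ is $2\omega_{ii}$ plus DDR terms'') and your reverse step (``$S_{1,i,i\pm1}$ is a combination of $a_i,b_i,c_i$'') are both unjustified. The symmetry $(A_iA_jA_h)_{uu}=(A_hA_jA_i)_{uu}$ you plan to use only swaps the two outer indices; it never moves the middle one, which is the only one the $\omega$'s control. The forward direction is easily repaired: take $h=i$ and $j=1$, which gives $\sum_{v\in\Gamma_i(u)}|\Gamma_i(u)\cap\Gamma_1(v)|=2\omega_{ii}(u)$ directly.

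\textbf{The ``if'' direction is false.} No argument can close this gap, because the ``if'' direction fails for DMR as the paper defines it (all $h,i,j$). Let $G$ be the point--line incidence graph of the $(9_3)$ configuration with points $0,\dots,5,a,b,c$ and lines $a02,a14,a35,b03,b15,b24,c04,c13,c25$. Its non-collinearity graph is $C_6\cup C_3$.
\begin{itemize}
\item $G$ is cubic and bipartite on $18$ vertices.
\item Every point and every line has distance distribution $(1,3,6,6,2)$, so $G$ is DDR.
\item Bipartiteness gives $\omega_{ii}(u)=0$ for all $u$ and $i$.
\end{itemize}
For a point $u$, $\Gamma_2(u)$ is the set of six points collinear with $u$. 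Then $\sum_{v\in\Gamma_2(u)}|\Gamma_2(u)\cap\Gamma_2(v)|=6+2t(u)$, where $t(u)$ is the number of triangles of the configuration through $u$. Here $t(u)$ is $12$ minus the number of non-collinear pairs among those six points. That gives $t(u)=12-6$ for $u\in\{a,b,c\}$ and $t(u)=12-5$ for $u\in\{0,\dots,5\}$. So the $h=i=j=2$ average is $3$ at $a$ but $10/3$ at $0$, and $G$ is not DMR.

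Only the ``only if'' half holds as stated. The equivalence does hold if DMR is weakened to the averages with $j=1$, or for diameter at most $2$, where the identities $\sum_j T_u(h,j,i)=k_hk_i$ determine everything. The statement needs a stronger hypothesis, and so does any later argument in the paper that relies on its ``if'' direction.
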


\begin{lemma}\cite[Proposition 4.5]{DiFi2017}\label{lem: caracterización de DMR con matrices}
    A graph $G$ with diameter $d$ and distance matrices $A_0, A_1, \dots, A_d$ is DMR if and only if, for all $h,i,j = 0, \dots, d$, the all-ones vector $\mathbf{1}$ is an eigenvector of the matrix $A_i A_j \circ A_h$, where $\circ$ denotes the Hadamard product.
\end{lemma}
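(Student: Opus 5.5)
The plan is to translate both sides of the equivalence into entrywise identities and compare them. Write $d=\operatorname{diam}(G)$ and fix $h,i,j\in\{0,\dots,d\}$. For a vertex $u$, the $u$-th entry of $(A_iA_j\circ A_h)\mathbf{1}$ is
\[
\sum_{v} (A_iA_j)_{uv}(A_h)_{uv}=\sum_{v\in\Gamma_h(u)} |\Gamma_i(u)\cap\Gamma_j(v)|,
\]
because $(A_iA_j)_{uv}$ counts the vertices $w$ with $d(u,w)=i$ and $d(w,v)=j$. Thus $\mathbf{1}$ is an eigenvector of $A_iA_j\circ A_h$ exactly when the sum $s_{hij}(u):=\sum_{v\in\Gamma_h(u)}|\Gamma_i(u)\cap\Gamma_j(v)|$ does not depend on $u$.

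First, take $i=h$ and $j=0$. Then $A_hA_0\circ A_h=A_h$, so the eigenvector condition says precisely that $|\Gamma_h(u)|$ is constant in $u$ for every $h$, i.e.\ that $G$ is DDR. Next, the DMR definition asks that $s_{hij}(u)/|\Gamma_h(u)|$ be independent of $u$. Once $|\Gamma_h(u)|=k_h$ is known to be constant, this is equivalent to $s_{hij}(u)$ being constant. So it remains to show that DMR implies DDR.

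For the forward direction, assume $G$ is DMR. I would use the average with $j=0$: $|\Gamma_i(u)\cap\Gamma_0(v)|$ equals $1$ if $v\in\Gamma_i(u)$ and $0$ otherwise, so this average is $\delta_{hi}$ and carries no information. Instead I would take $h=1$, $j=1$, $i=0$. The averaged quantity $|\Gamma_0(u)\cap\Gamma_1(v)|$ is then $1$ for every neighbour $v$, again trivial. Both choices fail, so I expect the main obstacle to be extracting DDR from DMR. Here I would rely on the convention in \cite{DiFi2017}: there DMR is defined only for graphs that are DDR (or at least regular), and the average is taken over $v\in\Gamma_h(u)$, which presupposes this. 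I would therefore state that, following \cite{DiFi2017}, the DMR property includes DDR, and note that Proposition~\ref{lem: caracterización de DMR} is stated under the DDR hypothesis. If a genuine derivation is required, I would try to obtain regularity from the average with $h=1$ and $i=j=1$, namely $\frac{1}{k(u)}\sum_{v\sim u}|\Gamma(u)\cap\Gamma(v)|=2e(\Gamma(u))/k(u)$, and from the averages with $i=j=h$. I would then count $\sum_v |\Gamma_{h+1}(v)|$ via double counting in the $(h,h+1)$ averages, proceeding by induction on $h$ to show that each $k_h$ is constant.

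For the reverse direction, assume every $A_iA_j\circ A_h$ has $\mathbf{1}$ as an eigenvector. The case $j=0$ gives DDR, as shown above. The remaining cases give $s_{hij}(u)$ constant, and dividing by the constant $k_h$ yields DMR.
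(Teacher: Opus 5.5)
Your reverse direction is correct. The triple $(h,h,0)$ collapses $A_hA_0\circ A_h$ to $A_h$, which gives DDR, and dividing the constant sums $s_{hij}(u)$ by the constant $k_h$ then gives DMR.

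The gap is in the forward direction, at exactly the step you flag: you never show that a DMR graph is DDR, and your fallbacks do not close it. You cannot take DDR as part of the definition. Neither the statement nor the paper's definition of DMR assumes it, and Diego and Fiol do not assume it either; they prove, as a consequence, that every DMR graph is DDR. The DDR hypothesis in \cite[Proposition~4.2]{DiFi2017} is needed for the converse of that characterization, not because it is built into DMR. The induction you sketch through the $(h,h+1)$ averages is not an argument as written. Constancy of the $h=i=j=1$ average, $2e(\Gamma(u))/k(u)$, does not by itself force regularity.

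The missing observation is that $h=0$ is an admissible index. Since $\Gamma_0(u)=\{u\}$, that average is defined for every $u$ and has a single term:
\[
\frac{1}{|\Gamma_0(u)|}\sum_{v\in\Gamma_0(u)}|\Gamma_i(u)\cap\Gamma_j(v)|=|\Gamma_i(u)\cap\Gamma_j(u)|=\delta_{ij}\,|\Gamma_i(u)|.
\]
So DMR with $h=0$ and $i=j$ says precisely that $|\Gamma_i(u)|$ does not depend on $u$. This is the DMR-side counterpart of your matrix-side triple $(h,h,0)$. On the DMR side that triple is useless exactly because the average has already divided out $|\Gamma_h(u)|$.

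If you prefer to avoid $h=0$, fix $h=1$ and sum the averages over $j=0,\dots,d$. Since $\sum_j|\Gamma_i(u)\cap\Gamma_j(v)|=|\Gamma_i(u)|$, this expresses $|\Gamma_i(u)|$ as a sum of vertex-independent quantities.

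With DDR established, $s_{hij}(u)$ equals $k_h$ times a vertex-independent average, and your forward direction is complete. The paper itself gives no proof of this lemma; it only cites \cite{DiFi2017}.
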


Moreover, Conde et al. \cite{conde2026} deduced the following core spectral and structural criteria for CDDR graphs, which will be extensively applied in our proofs:

\begin{theorem}\cite[Theorem 3]{conde2026}
    Let $G$ be a graph with diameter $d$. Then, $G$ is a CDDR graph if and only if $A_i(G)A_j(G)=A_j(G)A_i(G)$ for all $1\le i, j \le d$.
\end{theorem}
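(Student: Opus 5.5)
The plan is to translate both sides of the equivalence into statements about individual entries of the matrix products, since the CDDR condition is a pointwise condition on intersection sizes. First I would record the basic identity: for vertices $u,v$ and $0\le i,j\le d$,
\begin{equation*}
(A_iA_j)_{uv}=\sum_{w\in V(G)}(A_i)_{uw}(A_j)_{wv}=|\{w: d(u,w)=i,\ d(w,v)=j\}|=|\Gamma_i(u)\cap\Gamma_j(v)|,
\end{equation*}
using that each $A_k$ is a symmetric $0/1$ matrix, so $(A_j)_{wv}=(A_j)_{vw}$.

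Next I would compute the corresponding entry of the reversed product, $(A_jA_i)_{uv}=|\Gamma_j(u)\cap\Gamma_i(v)|$. By symmetry of the sets this is the same as $|\Gamma_i(v)\cap\Gamma_j(u)|$. Therefore $(A_iA_j)_{uv}=(A_jA_i)_{uv}$ holds for the pair $u,v$ exactly when $|\Gamma_i(u)\cap\Gamma_j(v)|=|\Gamma_i(v)\cap\Gamma_j(u)|$.

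Both directions then follow by quantifying over all $u,v$. If $G$ is CDDR, every entry of $A_iA_j$ agrees with the corresponding entry of $A_jA_i$, so the matrices commute for every $1\le i,j\le d$. Conversely, if $A_iA_j=A_jA_i$, comparing the $(u,v)$ entries gives the CDDR identity for all $u,v$ and all $1\le i,j\le d$.

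Finally I would check the boundary conventions. The indices $i,j$ in the CDDR definition range over $1,\dots,d$, which matches the theorem. The cases involving $A_0=I$, or $A_k=0$ for $k>d$, are trivial and need not be treated. There is no genuine obstacle here. The only point requiring care is the index bookkeeping in the second step: one must correctly match $(A_jA_i)_{uv}$ with $|\Gamma_j(u)\cap\Gamma_i(v)|$ rather than with the swapped-vertex expression, since that matching is exactly what turns matrix commutativity into the symmetric intersection condition.
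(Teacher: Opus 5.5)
Your proof is correct: the entrywise identities $(A_iA_j)_{uv}=|\Gamma_i(u)\cap\Gamma_j(v)|$ and $(A_jA_i)_{uv}=|\Gamma_j(u)\cap\Gamma_i(v)|$ turn commutativity of $A_i$ and $A_j$ into exactly the paper's CDDR condition. Equivalently, since the distance matrices are symmetric, $A_jA_i=(A_iA_j)^T$, so commutativity just says $A_iA_j$ is symmetric. The paper does not prove this result; it cites it from Conde et al. It does use the same entrywise identity later, in the Cayley graph commutativity theorem, so your argument is the standard one.
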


\begin{theorem}\cite[Theorem 20]{conde2026}\label{teo: caracterización de CDDR 2}
    Let $G$ be a graph with diameter $d$. Then, $G$ is a CDDR graph if and only if its distance matrices $A_1(G), \dots , A_d(G)$ are simultaneously diagonalizable.
\end{theorem}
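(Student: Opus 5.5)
The statement to prove is the last one displayed, \cite[Theorem 20]{conde2026}: $G$ is CDDR if and only if $A_1(G),\dots,A_d(G)$ are simultaneously diagonalizable. The plan is to pass through the commutativity characterization \cite[Theorem 3]{conde2026}, which says that CDDR is equivalent to $A_iA_j=A_jA_i$ for all $1\le i,j\le d$. The remaining work is linear algebra.

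First, note that each $A_i(G)$ is a real symmetric $0/1$ matrix, since distance is symmetric. Hence each $A_i$ is orthogonally diagonalizable, and in particular diagonalizable. This is what allows the two-matrix lemma \cite[5F]{Strang1988} to apply.

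\emph{Forward direction.} Assume $G$ is CDDR. By \cite[Theorem 3]{conde2026} the family $A_1,\dots,A_d$ pairwise commutes. Since the lemma is stated only for two matrices, I extend it to a finite commuting family by induction on $d$.
\begin{itemize}
\item Decompose $\mathbb{R}^n$ into the eigenspaces $E_\lambda$ of $A_1$.
\item Each $A_j$ with $j\ge 2$ commutes with $A_1$, so it maps every $E_\lambda$ into itself.
\item The restriction of a symmetric matrix to an invariant subspace is again diagonalizable; using an orthonormal basis of $E_\lambda$, the restriction is symmetric.
\item The restrictions of $A_2,\dots,A_d$ to $E_\lambda$ still pairwise commute.
\item By induction they are simultaneously diagonalizable on each $E_\lambda$. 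Concatenating these bases over all $\lambda$ gives a common eigenbasis of $\mathbb{R}^n$.
\end{itemize}
This yields the matrix $S$ with $S^{-1}A_iS=D_i$ diagonal for every $i$. Since $A_0=I$, it is diagonal in this basis as well.

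\emph{Reverse direction.} Suppose a single invertible $S$ has $S^{-1}A_iS=D_i$ diagonal for all $i$. Diagonal matrices commute, so
\[
A_iA_j = SD_iD_jS^{-1} = SD_jD_iS^{-1} = A_jA_i .
\]
By \cite[Theorem 3]{conde2026} again, $G$ is CDDR.

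\emph{Main obstacle.} The only nontrivial step is upgrading the pairwise lemma to the whole family. One has to check that the restrictions to the eigenspaces remain diagonalizable. This is immediate from symmetry, or in general from the fact that the minimal polynomial of a restriction divides the original minimal polynomial, which has distinct linear factors. Everything else is a direct appeal to earlier results.
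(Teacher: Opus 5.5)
Your proof is correct and follows the route the paper indicates (it states this theorem without proof, citing \cite{conde2026}): reduce CDDR to pairwise commutativity via \cite[Theorem 3]{conde2026}, then apply the commuting-implies-simultaneously-diagonalizable lemma \cite[5F]{Strang1988}, which you rightly extend from two matrices to the whole symmetric family by restricting to the eigenspaces of $A_1$. One small wording point: the induction is on the number of matrices in an arbitrary commuting family of symmetric operators, not on the diameter $d$, because the restrictions to $E_\lambda$ are no longer distance matrices of any graph; your argument already treats it this way.
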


\begin{Proposition}\cite[Proposition 22]{conde2026}\label{prop: caracterizacion de DP}
    Let $G$ be a CDDR graph. Then, the following statements are equivalent:
    \begin{enumerate}
        \item $G$ is a DP graph. 
        \item Given $1 \le j, k \le \operatorname{diam}(G)$, if $(D_1(G))_{jj} = (D_1(G))_{kk}$, then $(D_i(G))_{jj} = (D_i(G))_{kk}$ for all $1 \le i \le \operatorname{diam}(G)$. 
    \end{enumerate}
\end{Proposition}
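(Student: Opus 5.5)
The plan is to work entirely in the common eigenbasis supplied by Theorem~\ref{teo: caracterización de CDDR 2}. Since $G$ is CDDR, there is a non-singular $S$ with $S^{-1}A_iS=D_i$ diagonal for every $0\le i\le d$, where $d=\operatorname{diam}(G)$. Both implications then reduce to statements about diagonal matrices, because conjugation by $S$ commutes with evaluating polynomials: $S^{-1}p(A_1)S=p(D_1)$ for every polynomial $p$. We read the indices $j,k$ in condition~2 as ranging over all diagonal positions $1,\dots,|V(G)|$.

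\emph{(1)$\Rightarrow$(2).} Assume $G$ is DP, so for each $i$ there is a polynomial $p_i$ with $A_i=p_i(A_1)$. Conjugating by $S$ gives $D_i=p_i(D_1)$, that is, $(D_i)_{jj}=p_i\bigl((D_1)_{jj}\bigr)$ for every diagonal position $j$. Hence if $(D_1)_{jj}=(D_1)_{kk}$, applying the same function $p_i$ yields $(D_i)_{jj}=(D_i)_{kk}$, for every $i$.

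\emph{(2)$\Rightarrow$(1).} Let $\theta_1,\dots,\theta_m$ be the distinct diagonal entries of $D_1$, i.e.\ the distinct eigenvalues of $A_1$. Fix $i$ and define $\mu_r$ as the common value of $(D_i)_{jj}$ over all positions $j$ with $(D_1)_{jj}=\theta_r$.

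\begin{itemize}
\item The value $\mu_r$ is well defined precisely because of condition~2; this is the one place the hypothesis is used.
\item Since the $\theta_r$ are distinct, Lagrange interpolation gives a polynomial $p_i$ of degree at most $m-1$ with $p_i(\theta_r)=\mu_r$ for all $r$.
\item Then $p_i(D_1)=D_i$ entrywise on the diagonal, so $A_i=SD_iS^{-1}=Sp_i(D_1)S^{-1}=p_i(A_1)$.
\end{itemize}

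Doing this for every $1\le i\le d$ shows that $G$ is DP.

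The only delicate point is the well-definedness of the interpolation data $\mu_r$ in the converse. We must make sure condition~2 is applied to the same simultaneous diagonalization $S$ used to define the $D_i$. We must also note that the conclusion does not depend on the choice of $S$: the DP property is basis-free, and the forward direction shows condition~2 holds for every simultaneous diagonalizing $S$ once $G$ is DP. Everything else is routine linear algebra.
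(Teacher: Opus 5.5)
Your proof is correct and complete. The paper gives no proof of its own here; the proposition is quoted from Conde et al.\ \cite{conde2026}, so there is no in-text argument to compare against. Your argument is the standard one: polynomials in $A_1$ act entrywise on the common diagonal form $D_1$, and once condition~2 makes the interpolation data well defined, Lagrange interpolation on the distinct eigenvalues of $A_1$ recovers each $A_i$.

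You were right to read $j,k$ as ranging over the diagonal positions $1,\dots,|V(G)|$ rather than $1,\dots,\operatorname{diam}(G)$. The latter is a typo in the statement, and your reading is the one the paper uses when it applies the criterion, for example in Theorem~\ref{teo: si alguno no es dp el producto no es dp}, where $j,k$ index eigenvectors of $H$.

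Keep your remark that the forward direction does not depend on the choice of $S$. It really only uses $p_i(A_1)\mathbf{x}=p_i(\theta)\mathbf{x}$ for a common eigenvector $\mathbf{x}$. That is what justifies the paper's practice of testing the criterion on whatever common eigenvectors are convenient (tensor products, Fourier vectors) rather than on the $S$ fixed in the preliminaries.
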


Analyzing these regular graph structures naturally links to the algebraic combinatorics of association schemes, as these schemes expertly handle sets of binary matrices encoding distance relationships. We briefly outline the formal definition of an association scheme for completeness. A $d$-class association scheme is a set $\mathcal{A} = \{A_0, \dots, A_d\}$ of binary matrices satisfying the following axioms for all $1 \le i, j \le d$:
\begin{enumerate}
    \item[(a)] $A_0 = I$,
    \item[(b)] $\sum_{i=0}^{d} A_i = J$,
    \item[(c)] $A_i^T \in \mathcal{A}$,
    \item[(d)] $A_i A_j = A_j A_i$,
    \item[(e)] $A_j A_i \in \operatorname{span}(\mathcal{A})$.
\end{enumerate}

This axiomatic definition implies several structural consequences. For instance, property (b) not only establishes the linear independence of the matrices $A_0, \dots, A_d$ but also forces constant row and column sums for each $A_i$. Property (e) dictates that the linear span of $\mathcal{A}$ forms an algebra of dimension $d + 1$. If all matrices in $\mathcal{A}$ are symmetric, the scheme itself is called symmetric, which renders condition (d) automatically satisfied. Comprehensive treatments of this topic can be found in \cite{BrCoNe1989, GoMe2016}.

Consolidating these relationships, Conde et al. \cite{conde2026} constructed a diagram mapping the intersections and inclusions of these regular families within the overarching DDR class (see Fig. \ref{fig: comparacion}).

To visually differentiate the status of known graph families within this diagram, two specific symbols are deployed. Black squares denote isolated examples or baseline graphs previously identified in the literature. Conversely, red circles signify the existence of infinite families of graphs occupying that specific structural niche. A central contribution of the present paper is to upgrade these black squares into red circles by introducing systematic lifting operators that construct such infinite regular families.

\begin{figure}
\begin{center}
    \begin{tikzpicture}
        \draw[thick] (-0.5,0) ellipse (1.5cm and 0.75cm);
        \node[] at (-0.5,0) {\textbf{DR}};
        \draw[thick] (1,0) ellipse (4.5cm and 2.25cm);
        \node[] at (4.5,0) {\textbf{CDDR}};
        \fill[red] (4,1) circle(3.5pt);
        \node[below] at (4,1) {};
        \draw[thick] (0.5,0) ellipse (3cm and 1.5cm);
        \node[] at (2.5,0) {\textbf{DP}};
        \fill[thick] (1.75,.75) rectangle ++(7pt,7pt);;
        \node[below] at (2,1) {};
        \fill (-.25,1) rectangle ++(7pt,7pt);;
        \node[below] at (0,1.25) {};
        \draw[thick] (-2,0) ellipse (3.5cm and 1.75cm);
        \node[] at (-4.5,0) {\textbf{DMR}};
        \fill (-4,1) rectangle ++(7pt,7pt);;
        \node[below] at (-4,1) {};
        \fill[red] (-2.25,1.25) circle(3.5pt);
        \node[below] at (-2.25,1.25) {};
        \draw[thick] (0,0) ellipse (6cm and 4cm);
        \node[] at (0,3) {\textbf{DDR}};
        \fill[red] (2,3) circle(3.5pt);
        \node[below] at (2,3) {};
    \end{tikzpicture}
\end{center}
\caption{The inclusion relationships and structural hierarchy among the families of distance-regular graphs studied by Conde et al. \cite{conde2026}.}\label{fig: comparacion}
\end{figure}

\section{Strong Product}
In this section, we analyze the structural and algebraic implications of the strong product operation, $\boxtimes$, focusing on the inheritance of commutative distance degree-regularity (CDDR). Given that CDDR graphs are characterized by a natural symmetry in the distribution of vertices at fixed distances—namely, satisfying the condition $|\Gamma_i(u) \cap \Gamma_j(v)| = |\Gamma_i(v) \cap \Gamma_j(u)|$ for all vertices and distance levels—it becomes crucial to understand how this internal metric balance behaves under topological expansions. 

To analyze the distance matrices of $G \boxtimes H$, it is convenient to first formalize the algebraic space in which they operate. We do this by introducing the tensor product algebra generated by the distance-matrix bases of the factors.

\begin{Definition}
    Let $G$ and $H$ be graphs with diameters $d_G$ and $d_H$, and let $\mathcal{A}_G$ and $\mathcal{A}_H$ be their respective distance-matrix bases. The tensor product algebra, denoted by $\mathcal{A}_G \otimes \mathcal{A}_H$, is defined as the subspace spanned by all Kronecker products of the form $A \otimes B$, with $A \in \operatorname{span}(\mathcal{A}_G)$ and $B \in \operatorname{span}(\mathcal{A}_H)$. 
\end{Definition}

With this algebraic framework established, we now determine the precise relationship between the distance matrices of the factor graphs and those of their strong product. By virtue of the supremum metric governing distances in $G \boxtimes H$, we can express the distance matrices of the product graph explicitly as elements of $\mathcal{A}_G \otimes \mathcal{A}_H$. This fundamental connection, which extends the formula for the adjacency matrix of the strong product to arbitrary distances, is formalized in the following lemma.

\begin{lemma}\label{lema: formula producto fuerte}
  Let $G$ and $H$ be two graphs. The distance matrix of order $i$ of the strong product $G \boxtimes H$ is given by.
    \begin{equation*}
        A_i(G\boxtimes H)= A_i(G) \otimes \left(\sum_{k=0}^i A_k(H) \right) + \left(\sum_{j=0}^{i-1} A_j(G) \right) \otimes A_i(H).
    \end{equation*}
    In particular $A_i(G \boxtimes H) \in span(\mathcal{A}_G \otimes \mathcal{A}_H)$
\end{lemma}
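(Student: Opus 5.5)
The plan is to reduce the matrix identity to an entrywise statement about distances and then use the standard formula for distance in the strong product. The key fact is
\[
dist_{G\boxtimes H}((g_1,h_1),(g_2,h_2))=\max\{dist_G(g_1,g_2),\,dist_H(h_1,h_2)\}.
\]
I will first prove this. For the upper bound, take shortest paths of lengths $a=dist_G(g_1,g_2)$ and $b=dist_H(h_1,h_2)$, and assume $a\ge b$. Walk along the $G$-path. During the first $b$ steps, simultaneously advance along the $H$-path; during the remaining $a-b$ steps, keep the $H$-coordinate fixed. Each step is an edge of $G\boxtimes H$ by the definition of the strong product, so there is a walk of length $\max\{a,b\}$. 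For the lower bound, every edge of $G\boxtimes H$ changes each coordinate by either equality or adjacency. Hence projecting a path of length $\ell$ onto either factor gives a walk of length at most $\ell$, so $\ell\ge a$ and $\ell\ge b$. Connectedness of $G$ and $H$ ensures all these distances are finite.

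Next I will compare entries. Indexing rows and columns by pairs, we have $(A\otimes B)_{(g_1,h_1),(g_2,h_2)}=A_{g_1g_2}B_{h_1h_2}$. So the entry of $A_i(G)\otimes\sum_{k=0}^iA_k(H)$ at a pair of vertices is $1$ exactly when $a=i$ and $b\le i$, and $0$ otherwise. Likewise the entry of $\sum_{j=0}^{i-1}A_j(G)\otimes A_i(H)$ is $1$ exactly when $a\le i-1$ and $b=i$. These two events are disjoint, since the first requires $a=i$ and the second $a<i$. Their union is precisely the event $\max\{a,b\}=i$. By the distance formula this is exactly the event that the $(g_1,h_1),(g_2,h_2)$ entry of $A_i(G\boxtimes H)$ equals $1$, so the identity holds entrywise.

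Two small points remain. First, when $i$ exceeds $d_G$ or $d_H$, the matrices $A_k$ with $k$ beyond the diameter vanish, so the formula remains consistent without any case split. Second, membership in $\operatorname{span}(\mathcal{A}_G\otimes\mathcal{A}_H)$ is immediate. Each summand is a Kronecker product of a matrix in $\operatorname{span}(\mathcal{A}_G)$ with a matrix in $\operatorname{span}(\mathcal{A}_H)$, and the tensor product algebra is closed under sums.

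The only genuinely nontrivial step is the max-distance formula, and specifically its lower bound via projection. This step is standard; it can alternatively be cited from \cite{HaImKl2011}.
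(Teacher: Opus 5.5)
Your argument is correct and is essentially the route the paper takes. The paper gives no separate proof: it presents the identity as a matrix restatement of the known description of exact distance graphs of the strong product, which rests on the supremum metric $d_{G\boxtimes H}((g_1,h_1),(g_2,h_2))=\max\{d_G(g_1,g_2),d_H(h_1,h_2)\}$. You carry out that same entrywise comparison explicitly and prove the max-distance formula yourself (also covering $i=0$ and $i$ beyond a factor's diameter), so your version is self-contained where the paper relies on a citation.
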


This is a matrix reformulation of Theorem 2.3 in \cite{Bresar2019}, which characterizes the exact distance-p graph of the strong product in combinatorial terms. The Kronecker product formulation given here is the form required for the spectral and algebraic arguments of Section 3.

\begin{lemma} \label{lema: AoB esta en el span}
    Let $G$ be a graph with diameter $d$, and let $\mathcal{A} = \{A_0, \dots, A_d\}$. 
    If $A, B \in \operatorname{span}(\mathcal{A})$, then $A \circ B \in \operatorname{span}(\mathcal{A})$.
\end{lemma}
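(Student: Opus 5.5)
The plan is to use bilinearity of the Hadamard product together with the fact that the distance matrices $A_0,\dots,A_d$ are $0/1$ matrices with pairwise disjoint supports. Write $A=\sum_{i=0}^{d}\alpha_i A_i$ and $B=\sum_{j=0}^{d}\beta_j A_j$ with real (or complex) coefficients. Since $\circ$ is bilinear, it suffices to compute the products $A_i\circ A_j$ of basis elements, and then
\begin{equation*}
A\circ B=\sum_{i=0}^{d}\sum_{j=0}^{d}\alpha_i\beta_j\,(A_i\circ A_j).
\end{equation*}

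The key step is the entrywise identity $A_i\circ A_j=\delta_{ij}A_i$. The $(u,v)$ entry of $A_i\circ A_j$ is $(A_i)_{uv}(A_j)_{uv}$. This product is nonzero exactly when $dist(u,v)=i$ and $dist(u,v)=j$ hold simultaneously. For $i\neq j$ that is impossible, so the entry is $0$. For $i=j$, the entry is $(A_i)_{uv}^2=(A_i)_{uv}$, because the entries are $0$ or $1$.

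Substituting this identity gives
\begin{equation*}
A\circ B=\sum_{i=0}^{d}\alpha_i\beta_i A_i\in\operatorname{span}(\mathcal{A}).
\end{equation*}
So $\operatorname{span}(\mathcal{A})$ is closed under $\circ$; in fact it is a Hadamard-product algebra with the $A_i$ as orthogonal idempotents.

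I do not expect a genuine obstacle. The only point needing care is the disjointness of the supports of the $A_i$. This comes directly from the distance $dist(u,v)$ being a single well-defined value for each pair of vertices, which holds because $G$ is connected. The identity $\sum_i A_i=J$ recorded in the introduction is another way to state this.
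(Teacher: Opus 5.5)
Your proof is correct and takes the same approach as the paper. Both expand $A$ and $B$ in the basis $\mathcal{A}$, use $A_i\circ A_j=\delta_{ij}A_i$ (which follows from the disjoint $0/1$ supports of the distance matrices), and conclude that $A\circ B=\sum_{i}\alpha_i\beta_i A_i\in\operatorname{span}(\mathcal{A})$.
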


\begin{proof}
    Let $A = \sum_{k=0}^d a_k A_k$ and $B = \sum_{k=0}^d b_k A_k$ be matrices in $\text{span}(\mathcal{A})$. Since the distance matrices of a graph have disjoint supports, their Hadamard product satisfies $A_k \circ A_j = \delta_{kj} A_k$. Therefore, we immediately obtain
    \begin{equation*}
        A \circ B = \sum_{k=0}^d a_k b_k A_k,
    \end{equation*}
    which clearly belongs to $\text{span}(\mathcal{A})$.
\end{proof}

With this algebraic framework in place, and using the mixed-product properties $(A \otimes B)(C \otimes D) = (AC) \otimes (BD)$ and $(A \otimes B) \circ (C \otimes D) = (A \circ C) \otimes (B \circ D)$, we are now ready to establish the closure of our regularity classes under the strong product.

\begin{theorem}\label{teo: clausura fuerte}
    The classes DDR, CDDR and DMR are closed under strong product.
\end{theorem}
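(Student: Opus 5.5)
The plan is to treat the three classes separately, working in the algebra $\mathcal{A}_G\otimes\mathcal{A}_H$ and using Lemma~\ref{lema: formula producto fuerte} to write each $A_i(G\boxtimes H)$ as a combination of Kronecker products $A_a(G)\otimes A_b(H)$. For brevity, write $S_i(G)=\sum_{k=0}^{i}A_k(G)$.

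\textbf{DDR.} A graph is DDR exactly when each $A_i$ has constant row sums, i.e.\ $A_i\mathbf{1}=k_i\mathbf{1}$. If $G$ and $H$ are DDR, then $A_a(G)\otimes A_b(H)$ applied to $\mathbf{1}\otimes\mathbf{1}$ gives $k_a(G)k_b(H)\,\mathbf{1}\otimes\mathbf{1}$. By Lemma~\ref{lema: formula producto fuerte}, $A_i(G\boxtimes H)$ is a linear combination of such terms. Hence the all-ones vector is an eigenvector of $A_i(G\boxtimes H)$ for every $i$, which gives DDR. (Combinatorially, $|\Gamma_i(g,h)|=k_i(G)s_i(H)+s_{i-1}(G)k_i(H)$, where $s_i$ denotes the partial sums of the $k_j$.)

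\textbf{CDDR.} By \cite[Theorem 3]{conde2026}, the factors satisfy that all $A_a(G)$ commute pairwise and all $A_b(H)$ commute pairwise. The mixed-product rule gives
\[
(A_a(G)\otimes A_b(H))(A_c(G)\otimes A_e(H))=A_aA_c\otimes A_bA_e=A_cA_a\otimes A_eA_b ,
\]
so $\mathcal{A}_G\otimes\mathcal{A}_H$ is a commutative set of matrices. Each $A_i(G\boxtimes H)$ lies in its span by Lemma~\ref{lema: formula producto fuerte}, so these matrices commute pairwise. Applying \cite[Theorem 3]{conde2026} again shows that $G\boxtimes H$ is CDDR.

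\textbf{DMR.} I would use Lemma~\ref{lem: caracterización de DMR con matrices}: it suffices to show that $\mathbf{1}\otimes\mathbf{1}$ is an eigenvector of $A_iA_j\circ A_h$ for all $h,i,j$, with every matrix taken in $G\boxtimes H$.

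1. Expand $A_i$ and $A_j$ by Lemma~\ref{lema: formula producto fuerte}. The product $A_iA_j$ becomes a sum of terms $(XY)\otimes(ZW)$, where $X,Y$ range over the $A_a(G)$ and the $S_b(G)$, and $Z,W$ over the corresponding matrices of $H$.

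2. Hadamard-multiply by $A_h=A_h(G)\otimes S_h(H)+S_{h-1}(G)\otimes A_h(H)$. Using $(A\otimes B)\circ(C\otimes D)=(A\circ C)\otimes(B\circ D)$, each term becomes
\[
\bigl((XY)\circ P\bigr)\otimes\bigl((ZW)\circ Q\bigr),
\]
with $P\in\{A_h(G),S_{h-1}(G)\}$ and $Q\in\{S_h(H),A_h(H)\}$.

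3. By bilinearity of both products, each factor $(XY)\circ P$ is a linear combination of matrices $A_aA_b\circ A_c$ in $G$. By Lemma~\ref{lem: caracterización de DMR con matrices} applied to $G$, each of these has $\mathbf{1}$ as an eigenvector. The same holds in $H$.

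4. The Kronecker product of two matrices having $\mathbf{1}$ as an eigenvector has $\mathbf{1}\otimes\mathbf{1}$ as an eigenvector. Summing over all terms, $A_iA_j\circ A_h$ has $\mathbf{1}\otimes\mathbf{1}$ as an eigenvector, and $G\boxtimes H$ is DMR.

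The main obstacle is step 3 of the DMR argument: the Hadamard product has to be distributed correctly through the Kronecker sums, and one must check that the partial sums $S_b$ cause no trouble. They do not, because both products are bilinear, so every factor reduces to combinations of the basic matrices $A_aA_b\circ A_c$. Lemma~\ref{lema: AoB esta en el span} is not needed here, since the eigenvector property is linear and that suffices. The index bookkeeping is routine but should be written carefully.
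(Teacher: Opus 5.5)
Your proposal is correct and follows the paper's proof essentially step for step. The three parts match: $\mathbf{1}\otimes\mathbf{1}$ is an eigenvector of each $A_i(G\boxtimes H)$ for DDR; everything in the span of $\mathcal{A}_G\otimes\mathcal{A}_H$ commutes for CDDR; and for DMR you expand $A_iA_j\circ A_h$ with the Kronecker/Hadamard mixed-product rules and apply the Diego--Fiol Hadamard-product criterion in each factor. Your CDDR step is phrased more carefully than the paper's, which calls $\operatorname{span}(\mathcal{A}_G)$ a commutative algebra even though for a non-DR CDDR graph this span need not be closed under multiplication; only pairwise commutativity of its elements is needed, and that is exactly what you check.
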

\begin{proof}
    Let, $G$ and $H$ DDR, is easy using Lemma \ref{lema: formula producto fuerte} that $\mathbf{1}_{|V(G)|} \otimes \mathbf{1}_{|V(H)|}$ is an eigenvector of $A_i(G \boxtimes H)$ for every $1 \leq i \leq \max\{\operatorname{diam}(G),\operatorname{diam}(H)\}$, so $G \boxtimes H$ is DDR.
    
   If $G$ and $H$ are CDDR graphs, their distance-matrix spans, $\operatorname{span}(\mathcal{A}_G)$ and $\operatorname{span}(\mathcal{A}_H)$, are commutative algebras. It is a well-known property that the tensor product of two commutative algebras, in this case $\operatorname{span}(\mathcal{A}_G) \otimes \operatorname{span}(\mathcal{A}_H)$, is also a commutative algebra. Since Lemma \ref{lema: formula producto fuerte} guarantees that $A_i(G \boxtimes H) \in \operatorname{span}(\mathcal{A}_G) \otimes \operatorname{span}(\mathcal{A}_H)$, it follows immediately that $G \boxtimes H$ is CDDR.

    Finally, assume $G$ and $H$ are DMR graphs. Using the mixed-product properties of the Kronecker product (for both ordinary matrix multiplication and the Hadamard product), any expression of the form $(A_i(G \boxtimes H) A_j(G \boxtimes H) \circ A_h(G \boxtimes H)) \mathbf{1}_{|V(G \boxtimes H)|}$ can be linearly expanded into terms of the form
    \begin{equation*}
    ((A_{i'} A_{j'} \circ A_{h'}) \mathbf{1}_{|V(G)|}) \otimes ((A_{i''} A_{j''} \circ A_{h''}) \mathbf{1}_{|V(H)|}).
    \end{equation*}
Because $G$ and $H$ are DMR, each factor evaluates to a constant multiple of its respective all-ones vector. Consequently, the entire linear combination reduces to a scalar multiple of $\mathbf{1}_{|V(G \boxtimes H)|}$, proving that $G \boxtimes H$ is DMR.
\end{proof}

\begin{Corollary}\label{coro: autovalores de producto fuerte}
    Let $G$ and $H$ be CDDR graphs such that $S_G$ and $S_H$ are the matrices that simultaneously diagonalize $A_i(G)$ for all $1 \leq i \leq \operatorname{diam}(G)$ and $A_j(H)$ for all $1 \leq j \leq \operatorname{diam}(H)$. Then, $S_G \otimes S_H$ simultaneously diagonalizes $A_k(G \boxtimes H)$ for $1 \leq k \leq \max\{\operatorname{diam}(G), \operatorname{diam}(H)\}$. 

    Moreover, let $\lambda_j^{(i)}$ be the eigenvalue of $A_i(G)$ associated with the eigenvector $\mathbf{x}_j$, and let $\mu_m^{(k)}$ be the eigenvalue of $A_k(H)$ associated with the eigenvector $\mathbf{y}_m$. Then, the eigenvalue $\theta_{j,m}^{(k)}$ of the distance matrix $A_k(G \boxtimes H)$ associated with the eigenvector $\mathbf{x}_j \otimes \mathbf{y}_m$ is given by:
    \begin{equation*}
        \theta_{j,m}^{(k)} = \lambda_j^{(k)} \left( \sum_{l=0}^k \mu_m^{(l)} \right) + \left( \sum_{t=0}^{k-1} \lambda_j^{(t)} \right) \mu_m^{(k)}.
    \end{equation*} 
\end{Corollary}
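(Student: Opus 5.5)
The plan is to combine the explicit formula of Lemma \ref{lema: formula producto fuerte} with the mixed-product rule $(A\otimes B)(C\otimes D)=(AC)\otimes(BD)$. By hypothesis $S_G^{-1}A_i(G)S_G=D_i(G)$ for all $0\le i\le \operatorname{diam}(G)$; this includes $i=0$ since $A_0=I$. Similarly $S_H^{-1}A_l(H)S_H=D_l(H)$. For indices beyond the diameter the matrices $A_i$ are zero, so these relations hold for every $i\ge 0$ with $D_i=0$. This convention matters because $k$ may exceed $\operatorname{diam}(G)$ or $\operatorname{diam}(H)$ when $k$ runs up to $\max\{\operatorname{diam}(G),\operatorname{diam}(H)\}$.

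\textbf{Step 1 (common diagonalizer).} The matrix $S_G\otimes S_H$ is invertible, with inverse $S_G^{-1}\otimes S_H^{-1}$. Conjugating a single Kronecker term gives
\begin{equation*}
(S_G\otimes S_H)^{-1}\bigl(A_i(G)\otimes A_l(H)\bigr)(S_G\otimes S_H)=\bigl(S_G^{-1}A_i(G)S_G\bigr)\otimes\bigl(S_H^{-1}A_l(H)S_H\bigr)=D_i(G)\otimes D_l(H),
\end{equation*}
which is diagonal. Conjugation is linear, so it can be applied term by term to the expression for $A_k(G\boxtimes H)$ from Lemma \ref{lema: formula producto fuerte}. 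This yields
\begin{equation*}
D_k(G)\otimes\Bigl(\sum_{l=0}^k D_l(H)\Bigr)+\Bigl(\sum_{t=0}^{k-1}D_t(G)\Bigr)\otimes D_k(H).
\end{equation*}
This is a sum of Kronecker products of diagonal matrices, and hence is itself diagonal. Therefore $S_G\otimes S_H$ simultaneously diagonalizes every $A_k(G\boxtimes H)$.

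\textbf{Step 2 (eigenvalues).} The columns of $S_G\otimes S_H$ are exactly the vectors $\mathbf{x}_j\otimes\mathbf{y}_m$. We compute directly
\begin{equation*}
\bigl(A_i(G)\otimes A_l(H)\bigr)(\mathbf{x}_j\otimes\mathbf{y}_m)=\lambda_j^{(i)}\mu_m^{(l)}\,\mathbf{x}_j\otimes\mathbf{y}_m.
\end{equation*}
Summing this over the terms of Lemma \ref{lema: formula producto fuerte} and using linearity gives the stated formula for $\theta^{(k)}_{j,m}$. As a check on the constant terms, $\lambda_j^{(0)}=\mu_m^{(0)}=1$.

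\textbf{Main obstacle.} The only real subtlety is bookkeeping, and in particular the convention that $A_i=0$ and $\lambda^{(i)}_j=0$ once $i$ exceeds the relevant diameter. Once this convention is fixed, the argument reduces to the two mixed-product computations above. It does not even need the CDDR hypothesis beyond the existence of $S_G$ and $S_H$, which Theorem \ref{teo: caracterización de CDDR 2} guarantees.
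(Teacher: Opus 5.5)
Your proposal is correct and follows essentially the same route as the paper. Both expand $A_k(G\boxtimes H)$ via the Kronecker formula of Lemma~\ref{lema: formula producto fuerte} and apply the mixed-product property to the common eigenvectors $\mathbf{x}_j\otimes\mathbf{y}_m$; your explicit convention $A_i=0$, $\lambda_j^{(i)}=0$ beyond the diameter is a useful detail the paper leaves implicit.
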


\begin{proof}
    By Lemma \ref{lema: formula producto fuerte}, the distance matrix $A_k(G \boxtimes H)$ is expressed as a linear combination of Kronecker products of the distance matrices of $G$ and $H$. Since $\mathbf{x}_j$ and $\mathbf{y}_m$ are simultaneous eigenvectors of $\{A_i(G)\}$ and $\{A_j(H)\}$ respectively, standard properties of the Kronecker product imply that their tensor product $\mathbf{x}_j \otimes \mathbf{y}_m$ is an eigenvector of $A_k(G \boxtimes H)$. Evaluating $A_k(G \boxtimes H) (\mathbf{x}_j \otimes \mathbf{y}_m)$ directly yields the formula for the eigenvalue $\theta_{j,m}^{(k)}$. Since this holds for a complete basis of eigenvectors, $S_G \otimes S_H$ simultaneously diagonalizes all distance matrices of the strong product.
\end{proof}

As an immediate consequence of the corollary regarding the eigenvalue distribution of CDDR graphs, we can explicitly map the spectrum of their distance relations to establish spectral determination. This spectral rigidity leads directly to the following result.

\begin{Proposition}\label{teo: condicion suficiente para DP}
    Let $G$ and $H$ be DP graphs. If the map $\Phi: \operatorname{Spec}(G) \times \operatorname{Spec}(H) \to \mathbb{R}$ given by
    \begin{equation*}
        \Phi(\lambda, \mu) = \lambda \mu + \lambda + \mu
    \end{equation*}
    is injective, then $G \boxtimes H$ is DP.
\end{Proposition}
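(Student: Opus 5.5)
Since DP graphs are CDDR, Theorem~\ref{teo: clausura fuerte} gives that $G\boxtimes H$ is CDDR. By Proposition~\ref{prop: caracterizacion de DP} it therefore suffices to check condition~2 for the simultaneous eigenbasis $\mathbf{x}_j\otimes\mathbf{y}_m$ supplied by Corollary~\ref{coro: autovalores de producto fuerte}. Concretely, whenever two basis vectors $\mathbf{x}_j\otimes\mathbf{y}_m$ and $\mathbf{x}_{j'}\otimes\mathbf{y}_{m'}$ have the same eigenvalue for $A_1(G\boxtimes H)$, we must show they have the same eigenvalue $\theta^{(k)}$ for every $k$.

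First compute the adjacency eigenvalue. Taking $k=1$ in the corollary, with $\lambda_j^{(0)}=\mu_m^{(0)}=1$, gives
\begin{equation*}
\theta^{(1)}_{j,m}=\lambda_j^{(1)}(1+\mu_m^{(1)})+\mu_m^{(1)}=\Phi(\lambda_j^{(1)},\mu_m^{(1)}).
\end{equation*}
Here $\lambda_j^{(1)}\in\operatorname{Spec}(G)$ and $\mu_m^{(1)}\in\operatorname{Spec}(H)$. Suppose $\theta^{(1)}_{j,m}=\theta^{(1)}_{j',m'}$. Injectivity of $\Phi$ on $\operatorname{Spec}(G)\times\operatorname{Spec}(H)$ then forces $\lambda_j^{(1)}=\lambda_{j'}^{(1)}$ and $\mu_m^{(1)}=\mu_{m'}^{(1)}$. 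The spectra are read as sets here, which is all injectivity needs.

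Next apply the DP hypothesis on each factor. By Proposition~\ref{prop: caracterizacion de DP} for $G$, equality of the $A_1(G)$-eigenvalues at indices $j,j'$ gives $\lambda_j^{(t)}=\lambda_{j'}^{(t)}$ for all $t$. The same argument for $H$ gives $\mu_m^{(l)}=\mu_{m'}^{(l)}$ for all $l$. This is equally immediate from the DP definition: $A_t(G)=p_t(A_1(G))$ gives $\lambda_j^{(t)}=p_t(\lambda_j^{(1)})$. For indices $t$ beyond the diameter the eigenvalues are $0$ on both sides, so those terms also agree.

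Finally, the formula in Corollary~\ref{coro: autovalores de producto fuerte} shows that $\theta^{(k)}_{j,m}$ depends only on the tuples $(\lambda_j^{(t)})_t$ and $(\mu_m^{(l)})_l$. Hence $\theta^{(k)}_{j,m}=\theta^{(k)}_{j',m'}$ for all $k$, which is condition~2, and so $G\boxtimes H$ is DP.

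The main obstacle is bookkeeping rather than any deep step. Proposition~\ref{prop: caracterizacion de DP} is stated relative to a particular simultaneous diagonalization. One has to confirm that it applies to the basis $S_G\otimes S_H$, which the corollary guarantees simultaneously diagonalizes all $A_k(G\boxtimes H)$. If that step is in doubt, I would instead argue directly: the eigenvalue functions are polynomial in $\lambda^{(1)}$ and $\mu^{(1)}$, so $A_k(G\boxtimes H)$ is a function of $A_1(G\boxtimes H)$ on each of its eigenspaces, and hence a polynomial in it by Lagrange interpolation.
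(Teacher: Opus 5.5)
Your proof is correct and follows the same route as the paper: work in the simultaneous eigenbasis $\mathbf{x}_j\otimes\mathbf{y}_m$ from Corollary~\ref{coro: autovalores de producto fuerte}, use injectivity of $\Phi$ to recover the factor adjacency eigenvalues, and conclude via Proposition~\ref{prop: caracterizacion de DP}. Your version is more complete than the paper's sketch, because you state explicitly that the DP property of each factor gives $\lambda_j^{(t)}=p_t(\lambda_j^{(1)})$, and your Lagrange-interpolation fallback shows the criterion holds for any simultaneous diagonalization.
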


\begin{proof}
    It is a standard property of the strong product that the eigenvalues of $A_1(G \boxtimes H)$ are precisely of the form $\theta = \Phi(\lambda, \mu) = \lambda \mu + \lambda + \mu$, where $\lambda \in \operatorname{Spec}(G)$ and $\mu \in \operatorname{Spec}(H)$. Since $\Phi$ is injective, there are no spectral collisions; that is, $\Phi(\lambda, \mu) = \Phi(\lambda', \mu')$ if and only if $\lambda = \lambda'$ and $\mu = \mu'$. This injectivity ensures that the eigenspaces of $A_1(G \boxtimes H)$ do not degenerate and are spanned exactly by the tensor products of the respective eigenspaces of $G$ and $H$. By Corollary \ref{coro: autovalores de producto fuerte} and Proposition \ref{prop: caracterizacion de DP}, this spectral separation guarantees that $G \boxtimes H$ is a DP graph.
\end{proof}

\begin{Corollary}
    Let $G$ be a DP graph such that $-1 \notin \operatorname{Spec}(G)$. Then, the strong product $G \boxtimes K_n$ is a DP graph for every $n \geq 2$.
\end{Corollary}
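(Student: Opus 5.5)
The natural route is Proposition \ref{teo: condicion suficiente para DP} with $H = K_n$. Since $K_n$ is distance-regular, it is DP. Its spectrum is $\operatorname{Spec}(K_n)=\{(n-1)^1,(-1)^{n-1}\}$, so for $n\ge 2$ the only eigenvalues are $n-1$ and $-1$. It therefore suffices to check that $\Phi(\lambda,\mu)=\lambda\mu+\lambda+\mu$ is injective on $\operatorname{Spec}(G)\times\{n-1,-1\}$, reading injectivity on the sets of distinct eigenvalues, as in the proof of that proposition.

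The key identity is $\Phi(\lambda,\mu)=(\lambda+1)(\mu+1)-1$. It gives $\Phi(\lambda,n-1)=n(\lambda+1)-1$ and $\Phi(\lambda,-1)=-1$ for every $\lambda$.

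The main obstacle appears here. Since $\Phi(\lambda,-1)$ is constant, $\Phi$ is never injective once $G$ has at least two distinct eigenvalues. This holds for every connected $G$ with an edge, and the trivial case $G=K_1$ is immediate. So the injectivity hypothesis fails, and Proposition \ref{teo: condicion suficiente para DP} cannot be applied verbatim. I would instead redo its argument directly through Proposition \ref{prop: caracterizacion de DP}, checking that every collision of $\Phi$ is harmless for the higher distance matrices.

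For this I would use the simultaneous eigenbasis $\mathbf{x}_j\otimes\mathbf{y}_m$ from Corollary \ref{coro: autovalores de producto fuerte}. Take $\mathbf{y}_1=\mathbf{1}$ with $\mu=n-1$, and $\mathbf{y}_m\perp\mathbf{1}$ with $\mu=-1$. For $K_n$ we have $\mu^{(0)}_m=1$, $\mu^{(1)}_m=\mu$, and $\mu^{(l)}_m=0$ for $l\ge2$. Substituting into the corollary gives, for $k\ge2$,
\[
\theta^{(k)}_{j,m}=\lambda^{(k)}_j(1+\mu)
\]
and for $k=1$,
\[
\theta^{(1)}_{j,m}=\lambda^{(1)}_j(1+\mu)+\mu .
\]

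There are two kinds of collision, and I would treat them separately.

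\emph{Collisions between $-1$-type eigenvectors.} Every $\mathbf{x}_j\otimes\mathbf{y}_m$ with $\mu=-1$ has $\theta^{(1)}=-1$. All of these have $\theta^{(k)}=0$ for $k\ge2$, so such collisions are harmless.

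\emph{Collisions between $(n-1)$-type eigenvectors.} The map $\lambda\mapsto n(\lambda+1)-1$ is injective. So two such vectors collide exactly when $\lambda^{(1)}_j=\lambda^{(1)}_{j'}$, and then $G$ being DP gives $\lambda^{(k)}_j=\lambda^{(k)}_{j'}$ by Proposition \ref{prop: caracterizacion de DP}. The resulting $\theta^{(k)}=n\lambda^{(k)}_j$ agree as well.

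\emph{Cross collisions.} An $(n-1)$-type value equals $-1$ exactly when $n(\lambda_j+1)=0$, that is, when $\lambda_j=-1$. This is precisely what the hypothesis $-1\notin\operatorname{Spec}(G)$ excludes.

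With all three cases covered, condition (2) of Proposition \ref{prop: caracterizacion de DP} holds for $G\boxtimes K_n$. Since $G\boxtimes K_n$ is CDDR by Theorem \ref{teo: clausura fuerte}, it is DP.
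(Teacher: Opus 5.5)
Your proof is correct, and it differs from the paper's in a way that matters. The paper applies Proposition~\ref{teo: condicion suficiente para DP} directly and asserts that $\Phi$ is injective. It does so after ruling out only the cross-collision $\Phi(\lambda_i,n-1)=\Phi(\lambda_j,-1)$. As you observe, $\Phi(\lambda,\mu)=(\lambda+1)(\mu+1)-1$ gives $\Phi(\lambda,-1)=-1$ for every $\lambda$. So $\Phi$ is never injective once $G$ has an edge, and the paper's proof has a gap exactly at the collisions inside the $\mu=-1$ block.

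Your route drops the false injectivity claim. Instead you check condition (2) of Proposition~\ref{prop: caracterizacion de DP} directly, on the basis $S_G\otimes S_{K_n}$ supplied by Corollary~\ref{coro: autovalores de producto fuerte}. The three cases go through as follows:
\begin{itemize}
\item All $\mu=-1$ eigenvectors have $\theta^{(1)}=-1$ and $\theta^{(k)}=0$ for $k\ge 2$, so their mutual collisions are harmless.
\item In the $\mu=n-1$ block, $\lambda\mapsto n(\lambda+1)-1$ is injective, and the DP property of $G$ then forces the higher eigenvalues $n\lambda^{(k)}_j$ to agree.
\item The hypothesis $-1\notin\operatorname{Spec}(G)$ is used where the paper uses it, to exclude cross-collisions.
\end{itemize}

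The paper's argument would be shorter if it were valid, but it is not. Yours is the one that actually establishes the corollary. It also shows that injectivity of $\Phi$ is stronger than necessary: all that is needed is that colliding product eigenvectors share their higher-distance eigenvalues.
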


\begin{proof}
    Recall that for $n \geq 2$, $\operatorname{Spec}(K_n) = \{ (n-1)^1, -1^{n-1} \}$. Suppose that the map $\Phi$ defined in Theorem \ref{teo: condicion suficiente para DP} is not injective. Then, there exist eigenvalues $\lambda_i, \lambda_j \in \operatorname{Spec}(G)$ such that
    \begin{equation*}
        \Phi(\lambda_i, n-1) = \Phi(\lambda_j, -1).
    \end{equation*}
    Expanding both sides of the equation yields:
    \begin{align*}
        \lambda_i(n-1) + \lambda_i + (n-1) &= \lambda_j(-1) + \lambda_j + (-1) \\
        n\lambda_i + n - 1 &= -1 \\
        n(\lambda_i + 1) &= 0.
    \end{align*}
    Since $n \geq 2$, this implies that $\lambda_i = -1$, which directly contradicts the hypothesis that $-1 \notin \operatorname{Spec}(G)$. Therefore, $\Phi$ must be injective. By Theorem \ref{teo: condicion suficiente para DP}, $G \boxtimes K_n$ is a DP graph.
\end{proof}

Having established that the classes of CDDR, DDR, and DMR graphs are closed under the strong product, and having provided sufficient conditions for the preservation of the DP property, a natural question arises: if the strong product $G \boxtimes H$ of two graphs $G$ and $H$ belongs to one of these regularity classes, must its factors necessarily belong to the same class?

The subsequent results address this question, detailing the specific conditions under which these regularities are strictly inherited by the factor graphs.

\begin{theorem}\label{teo: reciproco ddr}
    If the strong product $G \boxtimes H$ is a DDR graph, then both $G$ and $H$ are DDR graphs.
\end{theorem}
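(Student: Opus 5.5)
The plan is to pass from spheres to balls, where the strong product factors multiplicatively. Write $B_i^G(g)=\{g'\in V(G): dist_G(g,g')\le i\}$ for the closed ball of radius $i$ in $G$, and define $B_i^H(h)$ and $B_i^{G\boxtimes H}((g,h))$ in the same way. The key fact is that distance in $G\boxtimes H$ is the supremum metric,
\begin{equation*}
dist_{G\boxtimes H}((g,h),(g',h'))=\max\{dist_G(g,g'),\,dist_H(h,h')\},
\end{equation*}
which is exactly what underlies Lemma \ref{lema: formula producto fuerte}. From it we get directly
\begin{equation*}
B_i^{G\boxtimes H}((g,h)) = B_i^G(g)\times B_i^H(h), \qquad\text{so}\qquad |B_i^{G\boxtimes H}((g,h))| = |B_i^G(g)|\,|B_i^H(h)|
\end{equation*}
for every $i\ge 0$. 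Equivalently, summing the identity of Lemma \ref{lema: formula producto fuerte} over $0\le k\le i$ gives
\begin{equation*}
\sum_{k=0}^i A_k(G\boxtimes H)=\Big(\sum_{k=0}^i A_k(G)\Big)\otimes\Big(\sum_{k=0}^i A_k(H)\Big),
\end{equation*}
and taking row sums yields the same product formula.

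Next, I will note that a graph is DDR if and only if, for every $i$, the ball size $|B_i(u)|$ is independent of $u$. This holds because $|\Gamma_i(u)| = |B_i(u)|-|B_{i-1}(u)|$ and $|B_i(u)|=\sum_{k\le i}|\Gamma_k(u)|$. For $i$ beyond the diameter the ball is the whole vertex set, so there is no issue with the range of $i$. Applying this to $G\boxtimes H$, the DDR hypothesis gives constants $c_i$ with $|B_i^G(g)|\,|B_i^H(h)| = c_i$ for all $g\in V(G)$ and $h\in V(H)$.

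Now fix any $h_0\in V(H)$. Since $h_0\in B_i^H(h_0)$, the ball is nonempty and $|B_i^H(h_0)|\ge 1$. Hence $|B_i^G(g)| = c_i/|B_i^H(h_0)|$ does not depend on $g$, for every $i$. By the ball/sphere equivalence $G$ is DDR, and the symmetric argument (fixing $g_0$) shows $H$ is DDR.

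I do not expect a genuine obstacle. The only points needing care are justifying the max-metric description of distances in $G\boxtimes H$, which can be taken from Lemma \ref{lema: formula producto fuerte}, and noting that the divisor $|B_i^H(h_0)|$ is nonzero.
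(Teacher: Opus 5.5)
Your proof is correct, but it takes a different route from the paper's. The paper works with spheres directly. It splits $\Gamma_i((u,x))$ into three disjoint classes according to which coordinate realizes the maximum, which gives $|\Gamma_i((u,x))| = |B_i(u)||\Gamma_i(x)| + |\Gamma_i(u)||B_i(x)| + |\Gamma_i(u)||\Gamma_i(x)|$, where the paper's $B_i$ is the \emph{open} ball (distance $<i$). It then inducts on $i$: the hypothesis for smaller radii cancels the ball terms, and one divides by $|B_i(x)|+|\Gamma_i(x)|\geq 1$. In your closed-ball notation that three-term sum is exactly $|B_i^G(u)||B_i^H(x)| - |B_{i-1}^G(u)||B_{i-1}^H(x)|$. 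So passing to closed balls makes the count purely multiplicative and removes the need for induction: one division by $|B_i^H(h_0)|\geq 1$ per radius suffices. The sphere/ball equivalence, together with your remark that balls beyond the diameter are the whole vertex set, handles the range of $i$. Your telescoping check against Lemma \ref{lema: formula producto fuerte} is also right, since that lemma can be rewritten as $A_k(G\boxtimes H) = \Sigma_k(G)\otimes \Sigma_k(H) - \Sigma_{k-1}(G)\otimes \Sigma_{k-1}(H)$ with $\Sigma_k=\sum_{j\le k}A_j$. The advantage of the paper's sphere decomposition is that it serves as a template reused for the CDDR and DR converses. Your viewpoint transfers there too, and more cleanly, because $B_i((u,x))\cap B_j((v,x)) = \bigl(B_i^G(u)\cap B_j^G(v)\bigr)\times B_{\min(i,j)}^H(x)$. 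By two-variable inclusion--exclusion, symmetry (or distance-dependence) of the sphere intersection numbers is equivalent to the same property for the ball intersection numbers.
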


\begin{proof}
    Let $u, v \in V(G)$ and $x \in V(H)$. We aim to prove that $|\Gamma_i(u)| = |\Gamma_i(v)|$ for all $i$ by induction. 
    
    We begin with the base case $i=1$. Since $G \boxtimes H$ is DDR, we have $|\Gamma_1(u,x)| = |\Gamma_1(v,x)|$. A vertex $(w,z) \in \Gamma_1(u,x)$ if and only if one of the following disjoint cases holds:
    \begin{itemize}
        \item $u=w$ and $x \sim z$; there are exactly $|\Gamma_1(x)|$ such vertices.
        \item $u \sim w$ and $x=z$; there are exactly $|\Gamma_1(u)|$ such vertices.
        \item $u \sim w$ and $x \sim z$; there are exactly $|\Gamma_1(u)||\Gamma_1(x)|$ such vertices.
    \end{itemize}
    Thus, we can express the degree as $|\Gamma_1(u,x)| = |\Gamma_1(x)| + |\Gamma_1(u)| + |\Gamma_1(u)||\Gamma_1(x)|$. By an analogous argument for $|\Gamma_1(v,x)|$ and substituting into the initial equality, we obtain:
    \begin{align*}
        |\Gamma_1(x)| + |\Gamma_1(u)|(1 + |\Gamma_1(x)|) &= |\Gamma_1(x)| + |\Gamma_1(v)|(1 + |\Gamma_1(x)|) \\ 
        |\Gamma_1(u)| &= |\Gamma_1(v)|.
    \end{align*}
    This establishes the base case. Now, assume the result holds for all distances strictly less than $i$. To show $|\Gamma_i(u)| = |\Gamma_i(v)|$, we use the DDR property: $|\Gamma_i(u,x)| = |\Gamma_i(v,x)|$. The condition $d_{G \boxtimes H}((u,x),(w,z)) = i$ implies exactly one of the following occurs:
    \begin{itemize}
        \item $d_G(u,w) < i$ and $d_H(x,z) = i$; accounting for $|B_i(u)||\Gamma_i(x)|$ vertices.
        \item $d_G(u,w) = i$ and $d_H(x,z) < i$; accounting for $|\Gamma_i(u)||B_i(x)|$ vertices.
        \item $d_G(u,w) = i$ and $d_H(x,z) = i$; accounting for $|\Gamma_i(u)||\Gamma_i(x)|$ vertices.
    \end{itemize}
    Thus, $|\Gamma_i(u,x)| = |B_i(u)||\Gamma_i(x)| + |\Gamma_i(u)||B_i(x)| + |\Gamma_i(u)||\Gamma_i(x)|$. By the inductive hypothesis, the size of the ball $|B_i(u)|$ is equal to $|B_i(v)|$. Substituting the analogous expansion for $|\Gamma_i(v,x)|$ and subtracting the equal terms $|B_i(u)||\Gamma_i(x)| = |B_i(v)||\Gamma_i(x)|$ from both sides, we get:
    \begin{equation*}
        |\Gamma_i(u)|(|B_i(x)| + |\Gamma_i(x)|) = |\Gamma_i(v)|(|B_i(x)| + |\Gamma_i(x)|).
    \end{equation*}
    Since $x \in B_i(x)$, we have $|B_i(x)| \geq 1$. Therefore, we can divide both sides by the non-zero factor $(|B_i(x)| + |\Gamma_i(x)|)$ to conclude that $|\Gamma_i(u)| = |\Gamma_i(v)|$. This implies $G$ is DDR. An analogous argument shows that $H$ is also DDR.
\end{proof}

\begin{theorem}\label{teo: reciproco cddr}
    If the strong product $G \boxtimes H$ is CDDR, then both $G$ and $H$ are CDDR graphs.
\end{theorem}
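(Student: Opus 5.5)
The plan is to work directly with the combinatorial definition of CDDR and to pass from distance shells to distance balls, where the strong product factorizes cleanly. Fix $x \in V(H)$ and $u, v \in V(G)$. For integers $i,j \geq 0$, write
\begin{equation*}
    F_{u,v}(i,j) = |B_i(u) \cap B_j(v)|,
\end{equation*}
where $B_r(w)$ is the closed ball of radius $r$ in $G$. Define $P_{(u,x),(v,x)}(i,j)$ analogously in $G \boxtimes H$. The first step is to convert the hypothesis into a statement about balls. Since $G \boxtimes H$ is CDDR, we have $|\Gamma_i(p) \cap \Gamma_j(q)| = |\Gamma_i(q) \cap \Gamma_j(p)|$ for all vertices $p,q$ of the product and all $1 \le i,j \le \operatorname{diam}(G \boxtimes H)$. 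The same symmetry holds trivially when $i=0$ or $j=0$, since $|\Gamma_0(p)\cap\Gamma_j(q)| = [d(p,q)=j]$, and it also holds trivially for indices beyond the diameter. Summing the shell identity over all indices $i' \le i$ and $j' \le j$ then gives $P_{p,q}(i,j) = P_{q,p}(i,j)$ for all $i,j \ge 0$.

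The second step uses the supremum metric of the strong product, $d((u,x),(w,z)) = \max\{d_G(u,w), d_H(x,z)\}$, which underlies Lemma \ref{lema: formula producto fuerte}. A vertex $(w,z)$ lies in $B_i((u,x)) \cap B_j((v,x))$ if and only if $d_H(x,z) \le \min\{i,j\}$, $d_G(u,w) \le i$ and $d_G(v,w) \le j$. Hence
\begin{equation*}
    P_{(u,x),(v,x)}(i,j) = |B_{\min\{i,j\}}(x)| \cdot F_{u,v}(i,j),
\end{equation*}
and symmetrically $P_{(v,x),(u,x)}(i,j) = |B_{\min\{i,j\}}(x)| \cdot F_{v,u}(i,j)$. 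Because $x \in B_{\min\{i,j\}}(x)$, the factor is nonzero, just as in the proof of Theorem \ref{teo: reciproco ddr}. Cancelling it yields $F_{u,v}(i,j) = F_{v,u}(i,j)$ for all $i,j \ge 0$.

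The third step recovers shells from balls by finite differences. Setting $F(-1,\cdot) = F(\cdot,-1) = 0$, we have
\begin{equation*}
    |\Gamma_i(u) \cap \Gamma_j(v)| = F_{u,v}(i,j) - F_{u,v}(i-1,j) - F_{u,v}(i,j-1) + F_{u,v}(i-1,j-1).
\end{equation*}
The analogous identity holds with $u$ and $v$ exchanged. Since the right-hand sides agree term by term, $|\Gamma_i(u)\cap\Gamma_j(v)| = |\Gamma_i(v)\cap\Gamma_j(u)|$ for all $1 \le i,j \le \operatorname{diam}(G)$, so $G$ is CDDR. Exchanging the roles of the factors, by fixing $g \in V(G)$ and varying $x,y \in V(H)$, shows in the same way that $H$ is CDDR.

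The only delicate point is the passage between shells and balls, which requires the shell symmetry for all index pairs, including $0$ and indices up to $\operatorname{diam}(G \boxtimes H) = \max\{\operatorname{diam}(G),\operatorname{diam}(H)\}$. I expect this to be routine but will check the boundary cases explicitly. Working with balls avoids a direct case analysis of which coordinate attains the maximum, which is the natural obstacle if one attempts to count shells directly.
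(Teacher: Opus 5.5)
Your proof is correct, but it takes a different route from the paper.

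\textbf{The paper's route.} The paper works at the level of shells. For $i<j$ it splits $\Gamma_i(u,x)\cap\Gamma_j(v,x)$ according to which coordinate attains the maximum, and obtains
\[
|\Gamma_i(u,x)\cap\Gamma_j(v,x)| = |\Gamma_i(x)|\,|B_i(u)\cap\Gamma_j(v)| + (|B_i(x)|+|\Gamma_i(x)|)\,|\Gamma_i(u)\cap\Gamma_j(v)|.
\]
Here the paper's $B_i$ means the set of vertices at distance strictly less than $i$. It then runs an induction on the smaller index to cancel the mixed term $|B_i(u)\cap\Gamma_j(v)|=\sum_{k<i}|\Gamma_k(u)\cap\Gamma_j(v)|$, and finally divides by the positive coefficient.

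\textbf{Your route.} You pass to cumulative (closed-ball) counts, where the supremum metric factorizes exactly as $|B_{\min\{i,j\}}(x)|\cdot|B_i(u)\cap B_j(v)|$. This removes both the case analysis and the induction. One cancellation suffices, and inclusion--exclusion recovers the shells. In matrix terms, you use the cumulative form $\sum_{k\le i}A_k(G\boxtimes H)=\bigl(\sum_{k\le i}A_k(G)\bigr)\otimes\bigl(\sum_{k\le i}A_k(H)\bigr)$ of Lemma~\ref{lema: formula producto fuerte}. You combine it with the fact that CDDR is equivalent to pairwise commutativity of these cumulative matrices, and read off the entries indexed by $(u,x),(v,x)$, where the $H$-factor contributes the same diagonal entry $|B_{\min\{i,j\}}(x)|$ in both orders.

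\textbf{Comparison.} Your version is shorter. It is also fully explicit at exactly the point where the paper's induction is only sketched. It transfers verbatim to the DDR and DR converses. The paper's shell-level expansion has the advantage of matching the explicit formulas it uses elsewhere, namely the eigenvalue corollary and the $\omega_{ii}$ computation for DMR. Your boundary checks (indices $0$, $-1$, and beyond the diameter) are exactly what is needed, and they all go through.
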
 

\begin{proof}
Let $u, v \in V(G)$. We must show that $|\Gamma_i(u) \cap \Gamma_j(v)| = |\Gamma_j(u) \cap \Gamma_i(v)|$ for all $i$ and $j$. By symmetry, we may assume $i < j$. Since $G \boxtimes H$ is CDDR, for any $x \in V(H)$, we have:
\begin{equation*}
|\Gamma_i(u,x) \cap \Gamma_j(v,x)| = |\Gamma_j(u,x) \cap \Gamma_i(v,x)|.
\end{equation*}
For a vertex $(w,z) \in \Gamma_i(u,x) \cap \Gamma_j(v,x)$, the condition $(w,z) \in \Gamma_j(v,x)$ with $d_H(x,z) \leq i < j$ implies $d_G(v,w) = j$ and $d_H(x,z) < j$. Evaluating the conditions for membership in $\Gamma_i(u,x)$, exactly one of the following disjoint cases occurs:
\begin{itemize}
\item $d_G(u,w) < i$ and $d_H(x,z) = i$; yielding $|B_i(u) \cap \Gamma_j(v)| |\Gamma_i(x)|$ vertices.
\item $d_G(u,w) = i$ and $d_H(x,z) < i$; yielding $|\Gamma_i(u) \cap \Gamma_j(v)| |B_i(x)|$ vertices.
\item $d_G(u,w) = i$ and $d_H(x,z) = i$; yielding $|\Gamma_i(u) \cap \Gamma_j(v)| |\Gamma_i(x)|$ vertices.
\end{itemize}
Thus, the intersection size expands as:
\begin{equation*}
|\Gamma_i(u,x) \cap \Gamma_j(v,x)| = |\Gamma_i(x)||B_i(u) \cap \Gamma_j(v)| + (|B_i(x)| + |\Gamma_i(x)|)|\Gamma_i(u) \cap \Gamma_j(v)|.
\end{equation*}
Proceeding by induction on $i$ and employing an algebraic argument completely analogous to that of Theorem \ref{teo: reciproco ddr}, the terms corresponding to the balls $B_i$ cancel out precisely on both sides of the equation. This simplification directly yields $|\Gamma_i(u) \cap \Gamma_j(v)| = |\Gamma_j(u) \cap \Gamma_i(v)|$. Therefore, $G$ is CDDR, and by a symmetrical argument, $H$ is also CDDR.
\end{proof}

\begin{theorem}
    If the strong product $G \boxtimes H$ is a DR graph, then both $G$ and $H$ are DR graphs.
\end{theorem}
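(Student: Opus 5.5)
The plan is to reduce distance-regularity of $G$ to distance-regularity of the product by restricting to a single $H$-fibre. Fix any vertex $x\in V(H)$ and take $u,v,u',v'\in V(G)$ with $d_G(u,v)=d_G(u',v')=h$. Since the strong product carries the supremum metric, $d((u,x),(v,x))=\max\{d_G(u,v),0\}=h$ and likewise for $(u',x),(v',x)$. Distance-regularity of $G\boxtimes H$ then gives
\[
|\Gamma_i(u,x)\cap\Gamma_j(v,x)| = |\Gamma_i(u',x)\cap\Gamma_j(v',x)| \quad \text{for all } i,j .
\]
The task is to turn these product counts into counts in $G$ and peel off the contribution of $H$.

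For a vertex $(w,z)$, membership in $\Gamma_i(u,x)\cap\Gamma_j(v,x)$ depends only on the triple $(d_G(u,w),\, d_G(v,w),\, d_H(x,z))$. Write $c=d_H(x,z)$ and $p^h_{ab}(u,v)=|\Gamma_a(u)\cap\Gamma_b(v)|$. Grouping the vertices $z$ by $c$ gives
\[
|\Gamma_i(u,x)\cap\Gamma_j(v,x)| = \sum_{c\ge 0} |\Gamma_c(x)| \sum_{\substack{a,b:\ \max(a,c)=i\\ \max(b,c)=j}} p^h_{ab}(u,v).
\]
Here $|\Gamma_0(x)|=1$, and the $|\Gamma_c(x)|$ are the same for both pairs $(u,v)$ and $(u',v')$ because $x$ is fixed. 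It is cleanest to pass to cumulative counts. Set $N_{ij}(u,v)=|B_i(u)\cap B_j(v)|$ with closed balls, so $N_{ij}=\sum_{a\le i,\,b\le j}p^h_{ab}$. The product ball satisfies $B_i(u,x)=B_i(u)\times B_i(x)$, which gives
\[
|B_i(u,x)\cap B_j(v,x)| = N_{ij}(u,v)\,|B_{\min(i,j)}(x)|.
\]
Since $|B_{\min(i,j)}(x)|\ge 1$, dividing yields that $N_{ij}(u,v)=N_{ij}(u',v')$ for all $i,j$. Inclusion–exclusion (second differences in $i$ and $j$) then recovers $p^h_{ij}(u,v)=p^h_{ij}(u',v')$. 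This shows $G$ is DR, and the symmetric argument with a fixed $g\in V(G)$ shows $H$ is DR.

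The main point to check is that the product ball-intersection counts are determined by DR of $G\boxtimes H$. They are, because each is a finite sum of the product's $p^h_{ab}$ over $a\le i,\ b\le j$. One also needs the product diameter to be at least $\operatorname{diam}(G)$, so that all relevant $i,j$ are covered. This holds since the diameter is $\max\{\operatorname{diam}(G),\operatorname{diam}(H)\}$, and for indices beyond that range the counts are trivially equal anyway. This avoids the delicate induction used in Theorem \ref{teo: reciproco cddr}.
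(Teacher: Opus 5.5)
Your proof is correct, and it takes a genuinely different (and cleaner) route than the paper's.

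The paper also fixes $x\in V(H)$ and compares the pairs $(u,x),(v,x)$ and $(z,x),(w,x)$. It then expands the \emph{exact} shell intersection using the formula from Theorem~\ref{teo: reciproco cddr}, where $B_i$ denotes the open ball ($d<i$):
\[
|\Gamma_i(u,x)\cap\Gamma_j(v,x)| = |\Gamma_i(x)|\,|B_i(u)\cap\Gamma_j(v)| + \bigl(|B_i(x)|+|\Gamma_i(x)|\bigr)\,|\Gamma_i(u)\cap\Gamma_j(v)|.
\]
It inducts on $i$ to cancel the lower-order ball terms, and finally divides by the positive coefficient.

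You instead work with closed-ball counts. Because balls in the supremum metric are honest products, $B_i(u,x)=B_i(u)\times B_i(x)$, the count $|B_i(u,x)\cap B_j(v,x)|=N_{ij}(u,v)\,|B_{\min(i,j)}(x)|$ factors exactly. A single division plus a second-difference inversion then recovers $|\Gamma_i(u)\cap\Gamma_j(v)|$, with no induction and no case analysis.

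This buys you more than brevity. The paper's displayed expansion is derived under $i<j$: it uses $d_H(x,z)\le i<j$ to force $d_G(v,w)=j$. The case $i>j$ follows by swapping $u$ and $v$. The diagonal case $i=j$, however, has extra configurations, e.g.\ $d_G(u,w),d_G(v,w)<i$ with $d_H(x,z)=i$. For $i=j$ the paper's formula is actually wrong, and that case is never treated separately. Your cumulative argument handles all $(i,j)$, including $i=j$ and $h=0$, uniformly.

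The same closed-ball factorization would also streamline the DDR and CDDR converses. The paper's approach has the advantage of reusing one shell expansion across all three converse theorems.
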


\begin{proof}
    Let $u, v, z, w \in V(G)$ such that $d_G(u, v) = d_G(z, w)$. To show that $G$ is distance-regular, we must verify that the intersection numbers $|\Gamma_i(u) \cap \Gamma_j(v)|$ depend only on the distance $d_G(u, v)$. We proceed by induction on $i$.
    
    Fix $x \in V(H)$. Since $d_{G \boxtimes H}((u, x), (v, x)) = d_{G \boxtimes H}((z, x), (w, x))$ and $G \boxtimes H$ is distance-regular, we have:
    \begin{equation*}
        |\Gamma_i(u, x) \cap \Gamma_j(v, x)| = |\Gamma_i(z, x) \cap \Gamma_j(w, x)| \quad \text{for all } i, j.
    \end{equation*}
    By analyzing the structural properties of the strong product exactly as in Theorem \ref{teo: reciproco cddr}, we can explicitly expand the distance shells to obtain:
    \begin{equation*}
        |\Gamma_i(u, x) \cap \Gamma_j(v, x)| = |\Gamma_i(x)||B_i(u) \cap \Gamma_j(v)| + (|B_i(x)| + |\Gamma_i(x)|)|\Gamma_i(u) \cap \Gamma_j(v)|.
    \end{equation*}
    Applying the same expansion to the pair $(z,w)$ and substituting both into the initial equality, we can use the inductive hypothesis to cancel the terms corresponding to the balls $B_i$ on both sides (since they depend only on distances strictly smaller than $i$). Isolating the remaining terms, it immediately follows that $|\Gamma_i(u) \cap \Gamma_j(v)| = |\Gamma_i(z) \cap \Gamma_j(w)|$. Thus, $G$ is distance-regular. By a symmetric argument, $H$ is also distance-regular.
\end{proof}

\begin{theorem}\label{teo: si alguno no es dp el producto no es dp}
    If the strong product $G \boxtimes H$ is a DP graph, then both $G$ and $H$ are DP graphs.
\end{theorem}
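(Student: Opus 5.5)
A direct route avoids the eigenvalue characterization. Write $P = G \boxtimes H$ and $A_i = A_i(G)$, $B_i = B_i(H)$. By Lemma~\ref{lema: formula producto fuerte}, $A_1(P) = (I+A_1)\otimes(I+B_1) - I\otimes I$. Suppose $P$ is DP, so each $A_i(P) = p_i(A_1(P))$ for some polynomial $p_i$. Writing $X = I + A_1$ and $Y = I + B_1$, this says $A_i(P) = q_i(X\otimes Y)$, where $q_i(t) = p_i(t-1)$. For $i \ge 1$, summing the formula of Lemma~\ref{lema: formula producto fuerte} gives
\[
\sum_{k=0}^{i} A_k(P) = \Big(\sum_{k=0}^i A_k\Big)\otimes\Big(\sum_{k=0}^i B_k\Big).
\]
Write $M_i = \sum_{k\le i} A_k$ (the ball matrix of $G$) and $N_i = \sum_{k \le i} B_k$. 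Then $M_i \otimes N_i = r_i(X\otimes Y)$ for some polynomial $r_i$. It suffices to show that each $M_i$ is a polynomial in $A_1$; then $A_i = M_i - M_{i-1}$ is as well, so $G$ is DP. The same argument applied to $H$ gives the result for $H$.

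\textbf{Step 1.} To extract $M_i$, apply a partial trace, or more concretely contract the second tensor factor against a fixed vector. Let $\phi$ be the linear map sending $C\otimes D$ to $\frac{1}{n_H}(\mathbf{1}^T D\mathbf{1})\,C$, extended linearly over $\mathbb{R}^{n_G n_H\times n_G n_H}$. Block-wise, it averages the sum of all $n_H^2$ blocks. Then
\[
\phi(M_i\otimes N_i) = c_i M_i, \qquad c_i = \tfrac{1}{n_H}\mathbf{1}^T N_i\mathbf{1} > 0,
\]
since $N_i$ contains $I$.

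\textbf{Step 2.} Expand $r_i(X\otimes Y) = \sum_s a_s X^s\otimes Y^s$. Applying $\phi$ gives
\[
c_i M_i = \sum_s a_s \tfrac{1}{n_H}\big(\mathbf{1}^T Y^s \mathbf{1}\big)\, X^s.
\]
The right-hand side is a polynomial in $X$, hence a polynomial in $A_1$. Therefore $M_i$ is a polynomial in $A_1$ for every $i$. This avoids any regularity assumption on $H$, because only the scalars $\mathbf{1}^T Y^s\mathbf{1}$ appear. By symmetry, contracting on the first factor instead gives the result for $H$.

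\textbf{Main obstacle.} The delicate point is ensuring that the map $\phi$ is well defined and linear on all matrices, not just on pure tensors. Define it concretely: view an $n_G n_H$ square matrix as an $n_G\times n_G$ array of $n_H\times n_H$ blocks, and let $\phi(Z)_{ab} = \frac{1}{n_H}\mathbf{1}^T Z_{ab}\mathbf{1}$. This is clearly linear, and it agrees with the formula above on Kronecker products.

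Note also that we do not need the CDDR-based characterization in Proposition~\ref{prop: caracterizacion de DP}. One could alternatively argue through Theorem~\ref{teo: reciproco cddr} together with that proposition, but the eigenvalue bookkeeping there is messier; the contraction argument is cleaner.
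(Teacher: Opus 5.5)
Your argument is correct, and it takes a genuinely different route from the paper's.

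\textbf{The paper's route.} The paper argues by contradiction inside the spectral framework. Since $G\boxtimes H$ is DP, it is CDDR, so Theorem~\ref{teo: reciproco cddr} makes both factors CDDR. If $H$ were not DP, Proposition~\ref{prop: caracterizacion de DP} would supply common eigenvectors $\mathbf{y}_j,\mathbf{y}_k$ with equal $A_1(H)$-eigenvalues and a first index $i$ at which the $A_i(H)$-eigenvalues differ. Tensoring these with the Perron vector $\mathbf{1}$ of $G$ and using Corollary~\ref{coro: autovalores de producto fuerte} gives a coincidence of $A_1(G\boxtimes H)$-eigenvalues that $A_i(G\boxtimes H)$ separates, because the coefficient $\sum_{t\le i}\lambda_1^{(t)}$ (a ball size) is positive. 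Proposition~\ref{prop: caracterizacion de DP}, applied to the product, then gives the contradiction.

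\textbf{Your route.} You prove the polynomial identities directly. You use the two Kronecker factorizations supplied by the max-metric, $A_1(P)+I=X\otimes Y$ and the ball matrix $M_i\otimes N_i$, together with a linear contraction of one tensor factor that maps $\mathbb{R}[X\otimes Y]$ into $\mathbb{R}[X]$. Your constant $c_i>0$ plays exactly the role of the paper's positive ball-size coefficient. Contracting against $\mathbf{1}$ is the matrix-level analogue of the paper's pairing with the eigenvector $\mathbf{1}$ of the other factor. However, you only need the scalars $\mathbf{1}^TY^s\mathbf{1}$, not that $\mathbf{1}$ be an eigenvector. So your argument needs no regularity, commutativity or simultaneous diagonalization. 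It is also independent of Theorem~\ref{teo: reciproco cddr}, whose proof in the paper is only sketched, and of Proposition~\ref{prop: caracterizacion de DP}. It is constructive as well, producing the polynomials explicitly.

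\textbf{A shorter variant.} Use the partial trace $\operatorname{tr}_2(C\otimes D)=\operatorname{tr}(D)\,C$ over the $H$ factor. Since $\operatorname{tr}A_k(H)=0$ for $k\ge 1$, Lemma~\ref{lema: formula producto fuerte} gives $\operatorname{tr}_2 A_i(P)=n_H A_i(G)$ directly. Meanwhile $\operatorname{tr}_2 (X\otimes Y)^s=\operatorname{tr}(Y^s)X^s$, so the ball matrices and the telescoping step become unnecessary. The same one-line argument settles the DP part of Theorem~\ref{teo: reciprocos cartesiano}, which the paper only treats ``by analogy''. There, $\operatorname{tr}_2 A_i(G\square H)=n_H A_i(G)$, and $\operatorname{tr}_2$ of any power of $A_1(G)\otimes I+I\otimes A_1(H)$ lies in $\mathbb{R}[A_1(G)]$.

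\textbf{What each buys.} The paper's route stays within the eigenvalue bookkeeping used throughout the section. It also exhibits an explicit spectral witness, namely which eigenvalue collision in the product certifies the failure of DP. Your route trades that for self-containment and generality.

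One cosmetic point: writing $B_i$ for the distance matrices of $H$ clashes with the paper's notation $B_i(u)$ for balls.
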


\begin{proof}
    Since every DP graph is also CDDR, the hypothesis that $G \boxtimes H$ is DP implies it is CDDR. By Theorem \ref{teo: reciproco cddr}, both $G$ and $H$ must therefore be CDDR. 
    
    Assume, for the sake of contradiction, that $H$ is not DP. By Proposition \ref{prop: caracterizacion de DP}, there exist indices $k$ and $j$ such that $\mu_j^{(1)} = \mu_k^{(1)}$ but $\mu_j^{(i)} \neq \mu_k^{(i)}$ for some $i > 1$. Let $i$ be the minimum index such that this inequality holds; that is, $\mu_j^{(l)} = \mu_k^{(l)}$ for all $0 \leq l < i$. 
    
    Let $\lambda_1^{(1)}$ be the eigenvalue associated with the all-ones vector $\mathbf{1}_{|V(G)|}$. This eigenvalue corresponds to the valency of $G$, and thus $\lambda_1^{(1)} > 0$. The eigenvalue of $A_1(G \boxtimes H)$ associated with the pair $(1, j)$ is given by $\theta_{(1,j)}^{(1)} = \lambda_1^{(1)} + \mu_j^{(1)} + \lambda_1^{(1)}\mu_j^{(1)}$. Similarly, $\theta_{(1,k)}^{(1)} = \lambda_1^{(1)} + \mu_k^{(1)} + \lambda_1^{(1)}\mu_k^{(1)}$. Since $\mu_j^{(1)} = \mu_k^{(1)}$, it follows that $\theta_{(1,j)}^{(1)} = \theta_{(1,k)}^{(1)}$. 
    
    Now, applying Corollary \ref{coro: autovalores de producto fuerte} to the $i$-th distance matrix and algebraically rearranging the terms, we have:
    \begin{align*}
        \theta_{(1,j)}^{(i)} &= \lambda_1^{(i)} \left(\sum_{s=0}^{i-1} \mu_j^{(s)}\right) + \left(\sum_{t=0}^{i} \lambda_1^{(t)}\right)\mu_j^{(i)}, \\
        \theta_{(1,k)}^{(i)} &= \lambda_1^{(i)} \left(\sum_{s=0}^{i-1} \mu_k^{(s)}\right) + \left(\sum_{t=0}^{i} \lambda_1^{(t)}\right)\mu_k^{(i)}.
    \end{align*}
    By the minimality of $i$, the sums $\sum_{s=0}^{i-1} \mu_j^{(s)}$ and $\sum_{s=0}^{i-1} \mu_k^{(s)}$ are strictly equal. Furthermore, since the eigenvalue $\lambda_1^{(t)}$ associated with the all-ones vector represents the number of vertices at distance $t$ in $G$, the coefficient $\sum_{t=0}^{i} \lambda_1^{(t)}$ is strictly positive (as it contains at least $\lambda_1^{(0)}=1$ and the valency $\lambda_1^{(1)} > 0$). Given that $\mu_j^{(i)} \neq \mu_k^{(i)}$ and this coefficient is non-zero, we conclude that $\theta_{(1,j)}^{(i)} \neq \theta_{(1,k)}^{(i)}$. 
    
    Therefore, $G \boxtimes H$ possesses two identical eigenvalues for the adjacency matrix that correspond to distinct eigenvalues in the $i$-th distance matrix. By Proposition \ref{prop: caracterizacion de DP}, this implies that $G \boxtimes H$ is not DP, which directly contradicts our initial hypothesis. Thus, $H$ must be a DP graph. By a symmetric argument, $G$ is also a DP graph.
\end{proof}

Having established the inheritance properties for the DR, DP, DDR, and CDDR classes under the strong product, we turn to distance mean-regularity DMR. Because evaluating the DMR condition in $G \boxtimes H$ requires tracking intricate combinatorial interactions across both factor graphs, we first isolate the core mechanics in a preliminary lemma. This result establishes the precise behavior of the edge-counting parameters $\omega_{ii}$ and the block-sum distributions under the supremum metric, thereby streamlining the proof of our main theorem.

\begin{lemma}\label{lema: lema dmr}
    Let $G$ and $H$ graphs and $F=G \boxtimes H$. Then
    \begin{equation*}
    \begin{split}
    \omega_{ii}^{F}(u,t) &= \omega_{ii}^G(u) \omega_{ii}^H(t) + |B_{i+1}(u)| \cdot \omega_{ii}^H(t) \\
    &\quad + |B_{i+1}(t)| \cdot \omega_{ii}^G(u) + \omega_{i,i-1}^G(u) \omega_{i,i-1}^H(t) \\
    &\quad + \left[ \omega_{ii}^H(t) + |\Gamma_i(t)| \right] \left( \sum_{k=0}^{i-1} \omega_{kk}^G(u) + \omega_{k,k+1}^G(u) \right) \\
    &\quad + \left[ \omega_{ii}^G(u) + |\Gamma_i(u)| \right] \left( \sum_{k=0}^{i-1} \omega_{kk}^H(t) + \omega_{k,k+1}^H(t) \right).
    \end{split}
    \end{equation*}
\end{lemma}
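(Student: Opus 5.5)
The plan is a direct double-counting argument built on the supremum metric. In $F=G\boxtimes H$ we have $d_F((u,t),(v,s))=\max\{d_G(u,v),d_H(t,s)\}$, and two vertices $(v,s),(w,r)$ are adjacent exactly when $v,w$ are equal or adjacent in $G$, $s,r$ are equal or adjacent in $H$, and the pairs are not both equal. So an edge of $F$ counted by $\omega^F_{ii}(u,t)$ is determined by two pieces of data: a \emph{$G$-move} $(v,w)$, which is either a loop $v=w$ or an edge of $G$, and an \emph{$H$-move} $(s,r)$ of the same kind. Both endpoints must satisfy $\max\{d_G(u,\cdot),d_H(t,\cdot)\}=i$. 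Since a move changes distances by at most one, the admissible level-pairs are limited: $(a,a')$ with $|a-a'|\le1$ for $G$, and $(b,b')$ with $|b-b'|\le 1$ for $H$, subject to $\max\{a,b\}=\max\{a',b'\}=i$.

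The first step is to enumerate these level configurations and group them by which coordinate realises the maximum $i$ at each endpoint.

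\begin{itemize}
\item Both coordinates sit in level $i$ at both endpoints: this gives the product term $\omega^G_{ii}(u)\omega^H_{ii}(t)$ when both moves are edges.
\item One coordinate sits in level $i$ via an edge while the other coordinate is a loop or an edge inside the ball $B_i$: this produces the mixed terms $|B_{i+1}(\cdot)|\,\omega_{ii}(\cdot)$ and $[\omega_{ii}+|\Gamma_i|]\sum_{k<i}(\omega_{kk}+\omega_{k,k+1})$. Here $|\Gamma_i(t)|$ accounts for loops $s=r$ at level $i$, and the sum over $k<i$ enumerates the edges of $H$ lying inside $B_{i}(t)$ by their level-pair.
\item The crossing configuration is the one where the $G$-coordinate drops from $i$ to $i-1$ while the $H$-coordinate rises from $i-1$ to $i$, so each endpoint still has maximum $i$: this gives $\omega^G_{i,i-1}(u)\omega^H_{i,i-1}(t)$.
\end{itemize}

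The second step is to verify, case by case, that every $F$-edge lands in exactly one class. I would organise this as a $3\times 3$ table indexed by the type of $G$-move (loop; edge inside level $i$; edge with at least one endpoint in $B_{i-1}$) and the analogous type of $H$-move. For each cell I would compute the count, and then sum the cells to match the stated right-hand side term by term. The ball sizes $|B_{i+1}|$ arise when one coordinate is free to be any loop or any edge whose endpoints stay at distance $\le i$, combined with the complementary coordinate's level-$i$ edges.

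The main obstacle is the bookkeeping of orientation. An unordered edge $vw$ of $G$ and an unordered edge $sr$ of $H$ produce \emph{two} edges of $F$, namely $(v,s)(w,r)$ and $(v,r)(w,s)$. Likewise, in the crossing configuration only one of the two pairings has both endpoints at $F$-distance $i$. To control this, I would first redo the whole count with ordered adjacent pairs (arcs), where products factor cleanly, and only at the end divide by $2$ to return to $\omega$. While doing so I would check carefully whether the coefficients in the stated identity absorb these factors of $2$ correctly, or whether the statement implicitly uses arc counts. This is the point I expect to require the most care, possibly including a small correction to the constants.
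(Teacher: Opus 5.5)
The factor of $2$ you flag at the end is not absorbed anywhere: \textbf{the identity as stated is false}, so no correct count can reach it. Your decomposition, classifying each edge of $F$ by whether its $G$-part and $H$-part are a loop or an edge and then counting by level pairs, is the paper's own approach. Carried out correctly, it refutes the statement.

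Take $G=K_3$ on $\{0,1,2\}$ and $H=K_2$ on $\{0,1\}$, with $u=t=0$ and $i=1$. Then $F=K_6$, so $\omega^F_{11}(u,t)=\binom{5}{2}=10$. For the factors:
\begin{itemize}
\item in $G$: $\omega^G_{11}(u)=1$, $\omega^G_{10}(u)=2$, $|B_2(u)|=3$, $|\Gamma_1(u)|=2$;
\item in $H$: $\omega^H_{11}(t)=0$, $\omega^H_{10}(t)=1$, $|B_2(t)|=2$, $|\Gamma_1(t)|=1$;
\item the sums $\sum_{k=0}^{0}(\omega_{kk}+\omega_{k,k+1})$ equal $2$ for $G$ and $1$ for $H$.
\end{itemize}
The right-hand side then evaluates to $0+0+2+2+2+3=9$, not $10$. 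Reading $\omega$ as a count of ordered pairs, your other hypothesis, does not rescue it either: the left side becomes $20$ and the right side $12$.

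The error is in the diagonal edges. A $G$-edge $xy$ and an $H$-edge $sr$ produce two $F$-edges, $(x,s)(y,r)$ and $(x,r)(y,s)$. \emph{Both} lie in $\Gamma_i(u,t)$ whenever one factor edge lies inside level $i$ and the other has both ends at distance at most $i$. Only in your crossing configuration, where both factor edges join levels $i-1$ and $i$, does exactly one pairing survive. In the example, the $G$-edge $12$ and the $H$-edge $01$ give both $(1,0)(2,1)$ and $(2,0)(1,1)$.

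This changes your Step~1 as follows:
\begin{itemize}
\item your first bullet should give $2\,\omega^G_{ii}(u)\,\omega^H_{ii}(t)$;
\item the brackets in your second bullet should be $2\omega_{ii}+|\Gamma_i|$;
\item your crossing term $\omega^G_{i,i-1}(u)\,\omega^H_{i,i-1}(t)$ is correct as stated.
\end{itemize}
Write $\Sigma^G(u)=\sum_{k=0}^{i-1}\bigl(\omega^G_{kk}(u)+\omega^G_{k,k+1}(u)\bigr)$ and define $\Sigma^H(t)$ analogously. Your arc count then yields
\begin{align*}
\omega^F_{ii}(u,t) &= 2\,\omega^G_{ii}(u)\,\omega^H_{ii}(t) + |B_{i+1}(u)|\,\omega^H_{ii}(t) + |B_{i+1}(t)|\,\omega^G_{ii}(u)\\
&\quad + \omega^G_{i,i-1}(u)\,\omega^H_{i,i-1}(t) + \bigl[2\,\omega^H_{ii}(t)+|\Gamma_i(t)|\bigr]\Sigma^G(u)\\
&\quad + \bigl[2\,\omega^G_{ii}(u)+|\Gamma_i(u)|\bigr]\Sigma^H(t),
\end{align*}
which gives $10$ in the example.

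The paper's proof makes exactly the slip you anticipated. Its Cases~1 and~2 are right, but Case~3 counts a single $F$-edge per pair of factor edges. The corrected identity still serves the subsequent DMR-inheritance theorem. There the coefficient of $\omega^G_{ii}(u)$ becomes $2\omega^H_{ii}(t)+|B_{i+1}(t)|+2\Sigma^H(t)\ge 1$. All other $u$-dependent terms are fixed by the induction hypothesis and the DDR property. So you should prove the corrected identity, not the stated one.
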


\begin{proof}
     Recall that by definition,
    \begin{equation*}
        \omega_{ii}^F(u,t) = |{(x,s)(y,r) \in E(F) : (x,s), (y,r) \in \Gamma_i(u,t)}|.
    \end{equation*}
    We analyze the possible edges in the strong product by partitioning them into three disjoint types according to the adjacency in $G$ and $H$.

    \textbf{Case 1:} $x \sim y$ and $s = r$. Consider the edges in the strong product where adjacency occurs only in the first factor $G$. For an edge $(x,s) \sim (y,s)$ to satisfy $(x,s), (y,s) \in \Gamma_i(u,t)$, we distinguish the following sub-cases:
    \begin{itemize}
    \item $d(x,u)=i$, $d(y,u)=i$, and $d(s,t)<i$. There are $|B_i(t)| \cdot \omega_{ii}^G(u)$ such cases.
    \item $d(x,u)<i$, $d(y,u)=i$, and $d(s,t)=i$. Since $x \sim y$, this forces $d(x,u)=i-1$. There are $|\Gamma_i(t)| \cdot \omega_{i,i-1}^G(u)$ such cases.
    \item $d(x,u)<i$, $d(y,u)<i$, and $d(s,t)=i$. This configuration yields a total count of cases given by $|\Gamma_i(t)| \cdot \left( \sum_{k=0}^{i-1} \omega_{kk}^G(u) + \omega_{k,k-1}^G(u) \right)$.
    \item $d(x,u)=i$, $d(y,u)=i$, and $d(s,t)=i$. There are $|\Gamma_i(t)| \cdot \omega_{ii}^G(u)$ cases.
    \end{itemize}
    By collecting these terms and using the fact that $|B_{i+1}(t)| = |B_i(t)| + |\Gamma_i(t)|$, the total count for this case simplifies to:
    \begin{equation*}|B_{i+1}(t)| \cdot \omega_{ii}^G(u) + |\Gamma_i(t)| \cdot \left( \sum_{k=0}^{i-1} \omega_{kk}^G(u) + \omega_{k,k+1}^G(u) \right).
    \end{equation*}

    \textbf{Case 2:} $x = y$ and $s \sim r$.
    This case is strictly analogous to Case 1, focusing on the edges where adjacency occurs only in the second factor $H$. By symmetry, the total count of such edges $(x,s) \sim (x,r)$ where both endpoints lie in $\Gamma_i(u,t)$ is:
    \begin{equation*}
    |B_{i+1}(u)| \cdot \omega_{ii}^H(t) + |\Gamma_i(u)| \cdot \left( \sum_{k=0}^{i-1} \omega_{kk}^H(t) + \omega_{k,k+1}^H(t) \right).
    \end{equation*}

    \textbf{Case 3:} $x \sim y$ and $s \sim r$. Lastly, we consider the ``diagonal'' edges of the strong product, where adjacency occurs in both factors simultaneously. For an edge $(x,s) \sim (y,r)$ that $(x,s), (y,r) \in \Gamma_i(u,t)$, we must satisfy one of the following combinatorial configurations:
    \begin{itemize}
    \item $d(x,u)=i$, $d(y,u)=i$ and $d(s,t)<i, d(r,t)<i$: There are $\omega_{ii}^G(u) \cdot \left( \sum_{k=0}^{i-1} \omega_{kk}^H(t)+\omega_{k,k-1}^H(t) \right)$ such cases.
    \item $d(x,u)<i, d(s,t)=i$ and $d(y,u)=i, d(r,t)<i$: Since $d(x,y)=1$ and $d(s,r)=1$, this forces $d(x,u)=i-1$ and $d(r,t)=i-1$, totaling $\omega_{i,i-1}^G(u) \cdot \omega_{i,i-1}^H(t)$ cases.
    \item $d(x,u)=i, d(s,t)=i$ and $d(y,u)=i, d(r,t)<i$: This accounts for $\omega_{ii}^G(u) \cdot \omega_{i,i-1}^H(t)$ cases.
    \item $d(x,u)<i, d(s,t)=i$ and $d(y,u)<i, d(r,t)=i$: This configuration gives $\left( \sum_{k=0}^{i-1} \omega_{kk}^G(u)+\omega_{k,k-1}^G(u) \right) \cdot \omega_{ii}^H(t)$ cases.
    \item $d(x,u)=i, d(s,t)=i$ and $d(y,u)<i, d(r,t)=i$: This accounts for $\omega_{i,i-1}^G(u) \cdot \omega_{ii}^H(t)$ cases.
    \item $d(x,u)=i, d(s,t)=i$ and $d(y,u)=i, d(r,t)=i$, giving $\omega_{ii}^G(u) \cdot \omega_{ii}^H(t)$ cases.
    \end{itemize}
    Summing the six sub-cases detailed above, we obtain:
    \begin{align*}
    &\omega_{ii}^G(u) \omega_{ii}^H(t) + \omega_{i,i-1}^G(u) \omega_{i,i-1}^H(t) + \omega_{ii}^H(t) \left( \sum_{k=0}^{i-1} \omega_{kk}^G(u) + \omega_{k,k-1}^G(u) \right) \\
    &+ \omega_{ii}^G(u) \left( \sum_{k=0}^{i-1} \omega_{kk}^H(t) + \omega_{k,k-1}^H(t) \right).
\end{align*}
    By summing the contributions from all three cases and simplifying the resulting expression, we obtain:
    \begin{equation*}
    \begin{split}
    \omega_{ii}^{F}(u,t) &= \omega_{ii}^G(u) \omega_{ii}^H(t) + |B_{i+1}(u)| \cdot \omega_{ii}^H(t) \\
    &\quad + |B_{i+1}(t)| \cdot \omega_{ii}^G(u) + \omega_{i,i-1}^G(u) \omega_{i,i-1}^H(t) \\
    &\quad + \left[ \omega_{ii}^H(t) + |\Gamma_i(t)| \right] \left( \sum_{k=0}^{i-1} \omega_{kk}^G(u) + \omega_{k,k+1}^G(u) \right) \\
    &\quad + \left[ \omega_{ii}^G(u) + |\Gamma_i(u)| \right] \left( \sum_{k=0}^{i-1} \omega_{kk}^H(t) + \omega_{k,k+1}^H(t) \right).
    \end{split}
    \end{equation*}
\end{proof}

From this lemma and the characterization of DMR graphs within the DDR family established by Diego and Fiol (Proposition~\ref{lem: caracterización de DMR}) we obtain the following result.

\begin{theorem}
    If the strong product $G\boxtimes H$ is a DMR graph, then both $G$ and $H$ are DMR graphs.
\end{theorem}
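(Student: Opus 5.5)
Since every DMR graph is DDR, the hypothesis makes $F=G\boxtimes H$ DDR, and Theorem \ref{teo: reciproco ddr} then gives that $G$ and $H$ are DDR. By Proposition \ref{lem: caracterización de DMR}, it therefore suffices to show that $\omega_{ii}^G(u)$ is independent of $u\in V(G)$ for every $i$; the argument for $H$ is symmetric. The DMR hypothesis on $F$, again via Proposition \ref{lem: caracterización de DMR}, says that $\omega_{ii}^F(u,t)$ is independent of $(u,t)$. We fix $t\in V(H)$ and compare $\omega_{ii}^F(u,t)$ with $\omega_{ii}^F(v,t)$ for arbitrary $u,v\in V(G)$, using the expansion of Lemma \ref{lema: lema dmr}. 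Because $G$ is DDR, the quantities $|B_{i+1}(u)|$ and $|\Gamma_i(u)|$ do not depend on $u$.

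We proceed by strong induction on $i$ and prove simultaneously that $\omega_{ii}^G(u)$ and $\omega_{i,i-1}^G(u)$ are constant in $u$. The quantity $\omega_{i,i-1}^G(u)$ counts edges between $\Gamma_{i-1}(u)$ and $\Gamma_i(u)$. Summing degrees over $\Gamma_{i-1}(u)$ gives
\begin{equation*}
k\,|\Gamma_{i-1}(u)| = 2\omega_{i-1,i-1}(u) + \omega_{i-1,i-2}(u) + \omega_{i-1,i}(u),
\end{equation*}
where $k$ is the valency. Hence, once $\omega_{jj}$ is known to be constant for $j<i$ (and DDR gives constant shell sizes and regularity), every $\omega_{j,j+1}$ with $j\le i-1$ is constant. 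In particular the sums $\sum_{k=0}^{i-1}(\omega_{kk}^G+\omega_{k,k+1}^G)$ and the term $\omega_{i,i-1}^G(u)$ are constant. The base case $i=0$ is trivial ($\omega_{00}=0$).

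In the identity $\omega_{ii}^F(u,t)=\omega_{ii}^F(v,t)$, all terms not involving $\omega_{ii}^G$ then cancel. What remains is
\begin{equation*}
\bigl(\omega_{ii}^G(u)-\omega_{ii}^G(v)\bigr)\Bigl(\omega_{ii}^H(t)+|B_{i+1}(t)|+\textstyle\sum_{k=0}^{i-1}\bigl(\omega_{kk}^H(t)+\omega_{k,k+1}^H(t)\bigr)\Bigr)=0.
\end{equation*}
The second factor is at least $|B_{i+1}(t)|\ge 1$, so $\omega_{ii}^G(u)=\omega_{ii}^G(v)$, which completes the induction step.

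The main obstacle is the bookkeeping in Lemma \ref{lema: lema dmr}. We must make sure that the coefficient multiplying $\omega_{ii}^G(u)$ collects to a strictly positive quantity depending only on $t$, and that the indices in the lower-order sums (the paper mixes $\omega_{k,k-1}$ and $\omega_{k,k+1}$) genuinely involve only levels below $i$, or level $i$ via $\omega_{i-1,i}$. The degree-sum identity above is what makes those cross terms determined by lower levels. If the lemma's sums are read with $\omega_{k,k-1}$ instead, the same induction applies even more directly.
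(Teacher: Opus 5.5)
Your proof is correct and follows essentially the paper's route: factors are DDR by Theorem~\ref{teo: reciproco ddr}, the Diego--Fiol characterization reduces the claim to constancy of $\omega_{ii}^G$, strong induction with the degree-sum identity controls $\omega_{j,j+1}^G$ for $j<i$, and cancellation in Lemma~\ref{lema: lema dmr} leaves $\omega_{ii}^G(u)$ times a positive factor depending only on $t$. Your coefficient $\omega_{ii}^H(t)+|B_{i+1}(t)|+\sum_{k=0}^{i-1}(\omega_{kk}^H(t)+\omega_{k,k+1}^H(t))$ is the one the lemma as stated actually produces (the paper's proof writes $|\Gamma_i(t)|$ in place of that sum), and your caution about the lemma's bookkeeping is justified, since it undercounts the edges changing both coordinates (for $K_3\boxtimes K_2=K_6$ it gives $\omega_{11}=9$ instead of $10$); correcting this only doubles the terms $\omega_{ii}^G\omega_{ii}^H$, $\omega_{ii}^G$ times the $H$-sum, and $\omega_{ii}^H$ times the $G$-sum, so the coefficient of $\omega_{ii}^G(u)$ stays positive and depends only on $t$, and your argument goes through unchanged.
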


\begin{proof}
    We aim to show that the parameter $\omega_{ii}^G(u)$ is independent of the choice of $u \in V(G)$ for all $1 \leq i \leq \operatorname{diam}(G)$. We proceed by strong induction on $i$. By Theorem \ref{teo: reciproco ddr}, we already know that $G$ and $H$ are DDR, meaning their distance shells and ball sizes are vertex-independent.

    Assume that for all $j < i$, the parameters $\omega_{jj}^G$ are well-defined. By the standard degree-sum identity for distance layers, $k |\Gamma_j(u)| = \omega_{j,j-1}^G(u) + 2\omega_{j,j}^G(u) + \omega_{j,j+1}^G(u)$, where $k$ is the valency of $G$. Since $G$ is DDR and by the inductive hypothesis, it immediately follows that the crossing parameters $\omega_{j,j+1}^G$ are also well-defined for all $j < i$.

    Let $u, v \in V(G)$ be distinct vertices and fix $t \in V(H)$. Since $G \boxtimes H$ is DMR, $\omega_{ii}^F(u,t) = \omega_{ii}^F(v,t)$. Applying Lemma \ref{lema: lema dmr} to both pairs and canceling all terms that depend only on the fixed factor $H$, the DDR property of $G$, and the parameters $\omega_{jj}^G, \omega_{j,j+1}^G$ for $j < i$ (which are identical for $u$ and $v$ by the inductive hypothesis), the equation simplifies strictly to:
    \begin{equation*}
        \omega_{ii}^G(u) \left( \omega_{ii}^H(t) + |B_{i+1}(t)| + |\Gamma_i(t)| \right) = \omega_{ii}^G(v) \left( \omega_{ii}^H(t) + |B_{i+1}(t)| + |\Gamma_i(t)| \right).
    \end{equation*}
    Since $|B_{i+1}(t)| \geq 1$, the coefficient is strictly positive. Dividing by this non-zero factor yields $\omega_{ii}^G(u) = \omega_{ii}^G(v)$. This completes the induction, establishing that $G$ is DMR. By a symmetric argument, $H$ is also DMR.
\end{proof}

The structural theorems for the strong product $\boxtimes$ are powerful constructive tools. By proving that the properties of DDR, CDDR, and DMR are preserved, we move beyond isolated examples to systematically generate infinite families of graphs within these regularity classes.

As evidenced by Conde \cite{conde2026}, the Möbius ladder $M$ (Fig. \ref{fig: Mobius ladder}) is both DMR and CDDR but fails to be DP. By choosing $G$ to be any DMR and CDDR graph, Theorems \ref{teo: clausura fuerte}, and \ref{teo: si alguno no es dp el producto no es dp} imply that $M \boxtimes G$ retains the DMR and CDDR properties while failing to be DP.

\begin{figure}

\begin{center}
\begin{tikzpicture}[scale=1]

\coordinate (A1) at (-1.5,0);         
\coordinate (A2) at (-3, 2);
\coordinate (A3) at (-1.5, 4);

\coordinate (B1) at (1.5, 4);     
\coordinate (B2) at (3, 2);
\coordinate (B3) at (1.5, 0);

\coordinate (C1) at (-1, 1);    
\coordinate (C2) at (-1.5, 2);
\coordinate (C3) at (-1, 3);

\coordinate (D1) at (1, 3);
\coordinate (D2) at (1.5, 2);
\coordinate (D3) at (1, 1);

\draw[thick] (A1) -- (A2) -- (A3) -- (B1) -- (B2) -- (B3);
\draw[thick] (C1) -- (C2) -- (C3) -- (D1) -- (D2) -- (D3);
\draw[thick] (A1) -- (C1);
\draw[thick] (A2) -- (C2);
\draw[thick] (A3) -- (C3);
\draw[thick] (B1) -- (D1);
\draw[thick] (B2) -- (D2);
\draw[thick] (B3) -- (D3);
\draw[thick] (B3) -- (C1);
\draw[thick] (D3) -- (A1);
\foreach \point in {A1,A2,A3,B1,B2,B3,C1,C2,C3,D1,D2,D3} {
    \fill (\point) circle (3pt);
}

\end{tikzpicture}
\end{center}

\caption{M\"obius ladder $M$ with 12 vertices}\label{fig: Mobius ladder} 
\end{figure}

\begin{Corollary}
    For any $n, d \in \mathbb{N}$, there exists a graph $G$ with $|V(G)| \geq n$ and $\operatorname{diam}(G) \geq d$ such that $G$ is DMR and CDDR, but not DP.
\end{Corollary}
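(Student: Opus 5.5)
The plan is to take iterated strong products of the Möbius ladder $M$ of Fig.~\ref{fig: Mobius ladder} with a DR (hence DMR and CDDR) graph of large diameter, namely a cycle. Fix $n,d\in\mathbb{N}$ and choose $m \geq \max\{2d+1, n\}$. Set $G = M \boxtimes C_m$. Cycles are distance-regular, and DR is contained both in DMR (by the Diego--Fiol inclusion $\{\text{DR}\}\subsetneq\{\text{DMR}\}$) and in CDDR (by the Conde et al.\ hierarchy $\{\text{DR}\}\subsetneq\{\text{DP}\}\subsetneq\{\text{CDDR}\}$). Hence $C_m$ is both DMR and CDDR. As recalled before the statement, $M$ is also DMR and CDDR, and it is not DP.

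The first step is to apply Theorem~\ref{teo: clausura fuerte}, which gives that $G=M\boxtimes C_m$ is both DMR and CDDR. The second step is to apply Theorem~\ref{teo: si alguno no es dp el producto no es dp} in contrapositive form. If $G$ were DP, then both factors would be DP, in particular $M$. This contradicts the fact that $M$ is not DP, so $G$ is not DP.

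The remaining step is the size and diameter bounds. For the order, $|V(G)| = 12m \geq m \geq n$. For the diameter, the strong product carries the supremum metric, $d_{G}((g_1,h_1),(g_2,h_2)) = \max\{d_M(g_1,g_2), d_{C_m}(h_1,h_2)\}$, which is implicit in Lemma~\ref{lema: formula producto fuerte}. Therefore $\operatorname{diam}(G) = \max\{\operatorname{diam}(M), \lfloor m/2\rfloor\} \geq \lfloor m/2 \rfloor \geq d$.

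The argument has no real obstacle; it rests entirely on the earlier theorems. The only points to check are that $C_m$ genuinely lies in both DMR and CDDR, which follows from the stated inclusions since cycles are distance-regular, and that the supremum-metric formula for distances in $\boxtimes$ is available. If a self-contained justification of the latter is wanted, I would note that a walk in $M\boxtimes C_m$ can advance independently in each coordinate at every step, so reaching a pair of targets takes exactly the larger of the two coordinate distances.
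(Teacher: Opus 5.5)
Your proof is correct and is essentially the paper's argument: the paper also forms $M \boxtimes G$ for an arbitrary DMR and CDDR graph $G$, then invokes Theorem~\ref{teo: clausura fuerte} and Theorem~\ref{teo: si alguno no es dp el producto no es dp}. Your choice $G=C_m$ (also take $m\geq 3$ so that $C_m$ is a genuine cycle), together with the supremum-metric bound $\operatorname{diam}(M\boxtimes C_m)\geq\lfloor m/2\rfloor$, simply makes explicit the order and diameter bookkeeping that the paper leaves implicit.
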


Similarly, utilizing the graph $X$ from \cite{conde2026} (Fig. \ref{fig: DMR y no CDDR}), which is DMR but not CDDR, the product $X \boxtimes G$ allows for the construction of an infinite family of DMR graphs that are not CDDR.

\begin{figure}

\begin{center}
\begin{tikzpicture}[scale=1]

\coordinate (A1) at (0,1);         
\coordinate (A2) at (1, -0.5);
\coordinate (A3) at (-1, -0.5);

\coordinate (B1) at (0, 3);     
\coordinate (B2) at (1.5, 2.5);
\coordinate (B3) at (-1.5, 2.5);

\coordinate (C1) at (2.75, -1.25);    
\coordinate (C2) at (3, 0);
\coordinate (C3) at (1.5, -2);

\coordinate (D1) at (-2.75, -1.25);
\coordinate (D2) at (-3, 0);
\coordinate (D3) at (-1.5, -2);

\draw[thick] (A1) -- (A2) -- (A3) -- (A1);
\draw[thick] (B1) -- (B2) -- (B3) -- (B1);
\draw[thick] (C1) -- (C2) -- (C3) -- (C1);
\draw[thick] (D1) -- (D2) -- (D3) -- (D1);
\draw[thick] (C2) -- (C1) -- (C3) -- (C2);
\draw[thick] (A1) -- (A2) -- (A3) -- (A1);
\draw[thick] (A1) -- (B1);
\draw[thick] (A2) -- (C1);
\draw[thick] (A3) -- (D1);
\draw[thick] (B2) -- (C2);
\draw[thick] (C3) -- (D3);
\draw[thick] (D2) -- (B3);
\foreach \point in {A1,A2,A3,B1,B2,B3,C1,C2,C3,D1,D2,D3} {
    \fill (\point) circle (3pt);
}

\end{tikzpicture}
\end{center}

\caption{A DMR graph which is not CDDR}\label{fig: DMR y no CDDR} 
\end{figure}

\begin{Corollary}
    For any $n, d \in \mathbb{N}$, there exists a graph $G$ with $|V(G)| \geq n$ and $\operatorname{diam}(G) \geq d$ such that $G$ is DMR but not CDDR.
\end{Corollary}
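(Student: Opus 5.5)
The plan is to take $X\boxtimes G_m$, where $X$ is the DMR, non-CDDR graph of Fig.~\ref{fig: DMR y no CDDR} and $(G_m)$ is a family of DMR graphs whose order and diameter grow without bound. The choice I will make is $G_m = C_m$, the cycle. It is distance-regular, hence DMR, and it has $m$ vertices and diameter $\lfloor m/2\rfloor$. Complete graphs $K_m$ would also be DMR and would make the order large, but they have diameter $1$, so they cannot control the diameter. That is why I use cycles, or more generally any infinite DR family of unbounded diameter.

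First, DMR. By Theorem~\ref{teo: clausura fuerte}, the class DMR is closed under the strong product. Since both $X$ and $C_m$ are DMR, so is $X\boxtimes C_m$.

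Second, failure of CDDR. This follows from the contrapositive of Theorem~\ref{teo: reciproco cddr}. That theorem says that if $X\boxtimes C_m$ were CDDR, then both factors would be CDDR. This contradicts the choice of $X$ as not CDDR, so $X\boxtimes C_m$ is not CDDR.

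Third, the size bounds. The order is $|V(X\boxtimes C_m)| = 12m$. For the diameter, I will use the supremum-metric formula underlying Lemma~\ref{lema: formula producto fuerte}:
\[
d_{G\boxtimes H}((g_1,h_1),(g_2,h_2))=\max\{d_G(g_1,g_2),d_H(h_1,h_2)\}.
\]
This gives $\operatorname{diam}(X\boxtimes C_m)=\max\{\operatorname{diam}(X),\lfloor m/2\rfloor\}\ge \lfloor m/2\rfloor$. So taking $m\ge\max\{n,2d\}$ satisfies both $|V|\ge n$ and $\operatorname{diam}\ge d$.

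I expect no real obstacle here, since everything reduces to previously proved closure and inheritance results. The only point that needs care is justifying that the diameter of the strong product is the maximum of the factor diameters. The formula of Lemma~\ref{lema: formula producto fuerte} already encodes this fact, because $A_i(G\boxtimes H)\neq 0$ exactly when $i\le\max\{d_G,d_H\}$. If one prefers, this can also be read directly from the cited result of \cite{Bresar2019}.
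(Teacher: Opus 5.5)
Your proposal is correct and follows the paper's own route: take the strong product of the DMR, non-CDDR graph $X$ with a DMR factor, get DMR from the closure of DMR under $\boxtimes$ (Theorem~\ref{teo: clausura fuerte}), and get non-CDDR from the contrapositive of Theorem~\ref{teo: reciproco cddr}. Choosing cycles $C_m$ with $m \ge \max\{3,n,2d\}$ and using $\operatorname{diam}(G\boxtimes H)=\max\{\operatorname{diam}(G),\operatorname{diam}(H)\}$ makes explicit the order and diameter control that the paper leaves implicit when it writes ``$X \boxtimes G$'' for an unspecified $G$.
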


\begin{Remark}
    The eigenvalue $\lambda_i^{(1)} + \mu_j^{(1)} + \lambda_i^{(1)}\mu_j^{(1)}$ of $A_1(G \boxtimes H)$ implies that if $A_1(H)$ has an eigenvalue $-1$, then $-1$ is an eigenvalue of $A_1(G \boxtimes H)$ with multiplicity at least $|V(G)|$. In CDDR graphs, this spectral degeneracy restricts the number of distinct eigenvalues, providing a robust mechanism to construct graphs that are CDDR but not DP.
\end{Remark}

\begin{Corollary}
Let $G$ be a CDDR graph and let $\mathbf{x}$ be an eigenvector such that $A_1(G)\mathbf{x} = -\mathbf{x}$. If $A_i(G)\mathbf{x} = \lambda \mathbf{x}$ with $\lambda \neq 0$ for some $i \in \{1, \dots, \operatorname{diam}(G)\}$, then $G \boxtimes K_n$ is CDDR but not DP.
\end{Corollary}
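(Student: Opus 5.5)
The plan is to split the statement into two parts. First, $G \boxtimes K_n$ is CDDR; second, it is not DP, which I will show by exhibiting a violation of condition 2 in Proposition \ref{prop: caracterizacion de DP}.

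\textbf{CDDR.} $K_n$ is distance-regular, hence CDDR. Theorem \ref{teo: clausura fuerte} then gives that $G \boxtimes K_n$ is CDDR, and Corollary \ref{coro: autovalores de producto fuerte} supplies a simultaneous eigenbasis of tensor products $\mathbf{x}_j \otimes \mathbf{y}_m$ with explicit eigenvalues $\theta^{(k)}_{j,m}$.

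\textbf{Eigenvectors of $K_n$.} Since $\operatorname{diam}(K_n)=1$, we have $A_l(K_n)=0$ for $l\ge 2$, so $\mu^{(l)}_m=0$ for every $m$ and every $l\ge 2$. Two eigenvectors are relevant:
\begin{itemize}
\item[(a)] $\mathbf{1}_n$, with $\mu^{(0)}=1$ and $\mu^{(1)}=n-1$;
\item[(b)] any $\mathbf{y}\perp \mathbf{1}_n$, with $\mu^{(0)}=1$ and $\mu^{(1)}=-1$.
\end{itemize}

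\textbf{Adjacency eigenvalues coincide.} Take the given $\mathbf{x}$ with $A_1(G)\mathbf{x}=-\mathbf{x}$. Using $\theta^{(1)}=\lambda+\mu+\lambda\mu$ with $\lambda=-1$:
\begin{itemize}
\item for $\mathbf{x}\otimes\mathbf{1}_n$: $\theta^{(1)} = -1+(n-1)-(n-1)=-1$;
\item for $\mathbf{x}\otimes\mathbf{y}$: $\theta^{(1)} = -1-1+1=-1$.
\end{itemize}
So these two common eigenvectors of the product share the adjacency eigenvalue $-1$.

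\textbf{Distance-$i$ eigenvalues differ.} For $k\ge 2$, the formula of Corollary \ref{coro: autovalores de producto fuerte} collapses because $\mu^{(k)}=0$:
\[
\theta^{(k)}=\lambda^{(k)}\bigl(\mu^{(0)}+\mu^{(1)}\bigr).
\]
This gives $n\lambda^{(k)}$ for $\mathbf{x}\otimes\mathbf{1}_n$ and $0$ for $\mathbf{x}\otimes\mathbf{y}$. Choose $k=i$, where $A_i(G)\mathbf{x}=\lambda\mathbf{x}$ with $\lambda\neq 0$. Then $n\lambda\neq 0$, so the two vectors have distinct eigenvalues for $A_i(G\boxtimes K_n)$. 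Note that $\operatorname{diam}(G\boxtimes K_n)=\operatorname{diam}(G)\ge i$, so $A_i$ is a genuine distance matrix of the product. Proposition \ref{prop: caracterizacion de DP} now shows that $G\boxtimes K_n$ is not DP.

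\textbf{Main obstacle: the case $i=1$.} The hypothesis must be read with $i\ge 2$. If $i=1$, then $\lambda=-1\neq 0$ holds automatically and carries no information. In that case the separating index would have to be some other $k\ge 2$ with $A_k(G)\mathbf{x}\neq 0$. Since $\mathbf{x}\perp\mathbf{1}$, the relation $\sum_k A_k=J$ only yields $\sum_{k\ge 2}\lambda^{(k)}=0$, which does not guarantee such a $k$. So I would state and use the assumption for some $2\le i\le\operatorname{diam}(G)$, and remark that for $i=1$ an extra assumption of this form is needed.

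Everything else is a direct substitution into Corollary \ref{coro: autovalores de producto fuerte}.
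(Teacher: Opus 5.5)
Your argument is correct and is essentially the reasoning the paper compresses into the Remark preceding this corollary. The eigenvalue $-1$ of $K_n$ places $\mathbf{x}\otimes\mathbf{1}_n$ and $\mathbf{x}\otimes\mathbf{y}$ (with $\mathbf{y}\perp\mathbf{1}_n$) in the same $-1$-eigenspace of $A_1(G\boxtimes K_n)$, while $A_i(G\boxtimes K_n)=A_i(G)\otimes J_n$ for $i\ge 2$ gives them the distinct eigenvalues $n\lambda\neq 0$ and $0$.

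Your caveat about $i=1$ is a genuine defect of the statement, not an artifact of your proof. Take $G=K_m$ with $m\ge 2$: it satisfies the hypothesis with $i=1$, yet $K_m\boxtimes K_n\cong K_{mn}$ is distance-regular and hence DP. So the corollary needs $2\le i\le\operatorname{diam}(G)$. It also needs $n\ge 2$, so that a nonzero $\mathbf{y}\perp\mathbf{1}_n$ exists.
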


\begin{Example}
    The cycle $C_6$ admits an eigenvector $\mathbf{x}$ such that $A_1(C_6)\mathbf{x} = A_2(C_6)\mathbf{x} = -\mathbf{x}$. Since $C_6$ and $K_n$ are DP, their product $C_6 \boxtimes K_n$ is CDDR and DMR, but not DP for all $n \geq 2$. This demonstrates that the classes of DR and DP graphs are not closed under the strong product.
\end{Example}


\section{Cartesian Product}

We extend the structural characterizations of distance-related regularities under the Cartesian product $G \square H$, building upon the foundations laid by \cite{conde2026}. Previous work established that the CDDR property is preserved under the Cartesian product and explored specific structural obstructions using the link graph $K_2$ \cite{conde2026}.

However, a general converse for arbitrary factors has remained elusive. Our objective is to transcend these localized instances by introducing a distance matrix convolution framework. This approach proves that the inheritance of CDDR is an algebraic invariant of both the product and its factors. Furthermore, we derive new sharpness bounds for spectral determination (DP), completing the global picture of the structural properties established in \cite{conde2026}.

\begin{Proposition}[\cite{conde2026}, Proposition~6]\label{Prop: cddr square cddr = cddr}
Let $G$ and $H$ be two CDDR graphs. Then $G \square H$ is a CDDR graph.
\end{Proposition}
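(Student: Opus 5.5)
The plan is to mirror the strategy of Theorem \ref{teo: clausura fuerte}: express every distance matrix of $G \square H$ inside the commutative algebra $\operatorname{span}(\mathcal{A}_G) \otimes \operatorname{span}(\mathcal{A}_H)$, and then apply the characterization of CDDR graphs by pairwise commutation of distance matrices (Theorem 3 of \cite{conde2026}).

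\textbf{Step 1: a convolution formula for the distance matrices.} In the Cartesian product the metric is additive,
\begin{equation*}
d_{G\square H}((g_1,h_1),(g_2,h_2)) = d_G(g_1,g_2) + d_H(h_1,h_2).
\end{equation*}
A pair of vertices is at distance $k$ in $G \square H$ exactly when it is at distance $j$ in $G$ and distance $k-j$ in $H$ for some $j$. The sets of pairs obtained for different $j$ are disjoint, so their indicator matrices add, and we obtain
\begin{equation*}
A_k(G \square H) = \sum_{j=0}^{k} A_j(G) \otimes A_{k-j}(H),
\end{equation*}
with the convention $A_j(G)=0$ for $j>\operatorname{diam}(G)$, and similarly for $H$. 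This holds for every $0 \le k \le \operatorname{diam}(G)+\operatorname{diam}(H)$. I would verify it entrywise, using $(A\otimes B)_{(g_1,h_1),(g_2,h_2)} = A_{g_1g_2}B_{h_1h_2}$.

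\textbf{Step 2: commutativity.} Since $G$ and $H$ are CDDR, Theorem 3 of \cite{conde2026} gives $A_i(G)A_j(G)=A_j(G)A_i(G)$ for all $i,j$, and the same for $H$. By the mixed-product rule,
\begin{equation*}
(A_a(G)\otimes A_b(H))(A_c(G)\otimes A_d(H)) = A_a(G)A_c(G) \otimes A_b(H)A_d(H),
\end{equation*}
and the right-hand side equals $(A_c(G)\otimes A_d(H))(A_a(G)\otimes A_b(H))$. So all the elementary tensors $A_a(G)\otimes A_b(H)$ commute pairwise. By bilinearity, any two of the sums in Step 1 commute, so $A_k(G\square H)A_l(G\square H) = A_l(G\square H)A_k(G\square H)$ for all $k,l$. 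Theorem 3 of \cite{conde2026} then shows that $G\square H$ is CDDR.

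\textbf{Main obstacle.} The only real content is the additive-distance identity behind Step 1, in particular checking that no pair is double-counted and that out-of-range indices are handled correctly. This is a standard fact about geodesics in Cartesian products: a shortest path projects to walks in each factor, and concatenating shortest paths in the two factors gives a path of length $d_G+d_H$. Once that formula is in place, the rest is purely algebraic. As a by-product, the same argument gives the joint eigenvalues $\sum_j \lambda^{(j)}\mu^{(k-j)}$ on the vectors $\mathbf{x}\otimes\mathbf{y}$, analogous to Corollary \ref{coro: autovalores de producto fuerte}.
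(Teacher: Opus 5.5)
Your proposal is correct and takes essentially the paper's route. The paper quotes this result from \cite{conde2026} without a separate proof, and remarks that closure under $\square$ is straightforward from the convolution formula of Lemma~\ref{lem:formula_cartesiana}: that formula is your Step 1, and it is then combined with the commutativity of Kronecker products of commuting factors, exactly as in the proof of Theorem~\ref{teo: clausura fuerte}.
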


\begin{Proposition}[\cite{conde2026}, Proposition~7]
Let $G$ be a CDDR graph. Then $G \square K_2$ is a CDDR graph. Moreover, if $G \square K_2$ is a DR graph, then $G$ is a DR graph.
\end{Proposition}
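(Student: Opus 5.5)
The statement has two parts: $G \square K_2$ is CDDR whenever $G$ is CDDR, and if $G \square K_2$ is DR then $G$ is DR.

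The first part follows at once from Proposition~\ref{Prop: cddr square cddr = cddr}, because $K_2$ is distance-regular and hence CDDR. For a self-contained algebraic version, I would use the convolution formula for distances in the Cartesian product. Since $d_{G\square H}((g_1,h_1),(g_2,h_2)) = d_G(g_1,g_2)+d_H(h_1,h_2)$, we get
\begin{equation*}
A_i(G \square K_2) = A_i(G)\otimes I_2 + A_{i-1}(G)\otimes A_1(K_2),
\end{equation*}
with $A_{-1}(G)=0$. Every $A_i(G\square K_2)$ therefore lies in $\operatorname{span}(\mathcal{A}_G)\otimes\operatorname{span}(\mathcal{A}_{K_2})$. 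This is a tensor product of two commutative algebras, so it is commutative. Commutativity of the distance matrices then gives CDDR by \cite[Theorem 3]{conde2026}.

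For the converse, I would mimic the inductive arguments used for the strong product (Theorem~\ref{teo: reciproco cddr} and the DR analogue). Fix $u,v,z,w\in V(G)$ with $d_G(u,v)=d_G(z,w)=h$. Label the $K_2$ vertices $0,1$ and work on the layer $0$. The pairs $(u,0),(v,0)$ and $(z,0),(w,0)$ are both at distance $h$ in $G\square K_2$, so the DR hypothesis gives
\begin{equation*}
|\Gamma_i(u,0)\cap\Gamma_j(v,0)| = |\Gamma_i(z,0)\cap\Gamma_j(w,0)|.
\end{equation*}
I then split each side according to the layer of the common vertex $(x,s)$. For $s=0$ the vertex contributes when $x\in\Gamma_i(u)\cap\Gamma_j(v)$. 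For $s=1$ it contributes when $x\in\Gamma_{i-1}(u)\cap\Gamma_{j-1}(v)$. This gives
\begin{equation*}
p_{ij}(u,v)+p_{i-1,j-1}(u,v) = p_{ij}(z,w)+p_{i-1,j-1}(z,w),
\end{equation*}
where $p_{ij}(u,v)=|\Gamma_i(u)\cap\Gamma_j(v)|$.

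The induction is on $\min(i,j)$. In the base case one of $i,j$ is $0$, and $p_{0j}(u,v)=\delta_{j,h}$ depends only on $h$. Negative indices contribute $0$. Subtracting the already-equal term $p_{i-1,j-1}$ from both sides then gives $p_{ij}(u,v)=p_{ij}(z,w)$ for all $i,j$, so $G$ is DR.

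The only step that needs care is the claim that the layer decomposition is exact. This needs $d((x,1),(u,0))=d_G(x,u)+1$, which holds by the Cartesian distance formula. It also needs the boundary cases where $i-1$ or $j-1$ equals $-1$ to be handled consistently. Once these are checked, the argument is a simple telescoping induction, considerably easier than the strong-product case because no ball terms appear.
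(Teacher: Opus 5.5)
Your proof is correct and follows the same line of reasoning the paper uses for Cartesian products. The first part is the Kronecker-span commutativity argument of Theorem \ref{teo: clausura fuerte} applied to Lemma \ref{lem:formula_cartesiana}; strictly speaking, $\operatorname{span}(\mathcal{A}_G)$ need not be closed under multiplication for a CDDR graph, but pairwise commutativity of the $A_i(G)$ already passes to the tensor span by the mixed-product rule. The second part is the $K_2$ instance of the layer-convolution induction in Theorem \ref{teo: reciprocos cartesiano}, where the paper itself only cites this proposition from \cite{conde2026}. Keeping both base vertices in layer $0$ makes the decomposition $p_{ij}(u,v)+p_{i-1,j-1}(u,v)$ exact, so the telescoping induction on $\min(i,j)$ goes through without using the CDDR hypothesis on $G$.
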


\begin{Proposition}[\cite{conde2026}, Proposition~12]
Let $G$ be a DDR graph that is not a CDDR graph. Then $G \square K_2$ is also a DDR graph that is not a CDDR graph.
\end{Proposition}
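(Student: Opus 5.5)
The idea is to show that the DDR property passes to $G \square K_2$ while the failure of commutativity is inherited. I would use the distance formula $d_{G\square K_2}((u,a),(v,b)) = d_G(u,v) + d_{K_2}(a,b)$. In matrix form this gives, for every $i$,
\begin{equation*}
A_i(G \square K_2) = A_i(G)\otimes I_2 + A_{i-1}(G)\otimes (J_2 - I_2),
\end{equation*}
with the convention $A_{-1}(G)=0$ and $A_{d+1}(G)=0$. The diameter of the product is therefore $\operatorname{diam}(G)+1$.

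For the DDR part, I would apply this formula to the all-ones vector $\mathbf{1}_{|V(G)|}\otimes \mathbf{1}_2$. Since $G$ is DDR, $A_i(G)\mathbf{1} = k_i\mathbf{1}$ for constants $k_i$. Hence $A_i(G\square K_2)$ has constant row sums $k_i + k_{i-1}$. Combinatorially this says $|\Gamma_i(u,a)| = |\Gamma_i(u)| + |\Gamma_{i-1}(u)|$, which is independent of $(u,a)$, so $G\square K_2$ is DDR.

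For the non-CDDR part, I would argue by contraposition using \cite[Theorem 3]{conde2026}. Suppose $G\square K_2$ is CDDR, so all its distance matrices commute. Write $P = J_2 - I_2$, and note $P^2 = I_2$ and $P I_2 = I_2 P$. Expanding the products with the mixed-product rule gives
\begin{equation*}
A_iA_j(G\square K_2) = (A_iA_j + A_{i-1}A_{j-1})\otimes I_2 + (A_iA_{j-1} + A_{i-1}A_j)\otimes P,
\end{equation*}
where the matrices on the right are distance matrices of $G$. The analogous expansion holds for $A_jA_i(G\square K_2)$. The matrices $I_2$ and $P$ are linearly independent, so commutativity in the product yields two families of identities in $G$:
\begin{equation*}
[A_i,A_j] + [A_{i-1},A_{j-1}] = 0 \quad\text{and}\quad [A_i,A_{j-1}] + [A_{i-1},A_j] = 0
\end{equation*}
for all $i,j$. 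Here $[X,Y]=XY-YX$, and all matrices are those of $G$.

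Next I would induct to conclude $[A_i(G),A_j(G)]=0$ for all $i,j$. The base case is $[A_0,A_j]=0$ trivially. Taking $i=1$ in the first identity gives $[A_1,A_j] = -[A_0,A_{j-1}] = 0$ for every $j$. Then, by induction on $i$, the first identity gives $[A_i,A_j] = -[A_{i-1},A_{j-1}]$, and iterating reduces this to a commutator involving $A_0$, which vanishes. Thus $G$ would be CDDR, a contradiction. Finally, the first identity indexed with $j \le d+1$ covers all needed pairs, since $A_{d+1}(G)=0$ is harmless.

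The main obstacle is the first step: justifying the Kronecker distance formula carefully, including the boundary index $i=d+1$ where only the $A_d\otimes P$ term survives. Once that formula is in hand, the rest is routine bookkeeping with the linear independence of $I_2$ and $P$.
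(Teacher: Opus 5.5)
Your proof is correct: the Kronecker formula, the row-sum computation, and the recursion $[A_i,A_j]=-[A_{i-1},A_{j-1}]$ all check out, and the recursion already follows from the $I_2$-coefficient alone, so the second family of identities is not needed. The paper only quotes this result from Conde et al., but within the paper it is the case $H=K_2$ of Lemma~\ref{lem:formula_cartesiana} together with the CDDR part of Theorem~\ref{teo: reciprocos cartesiano}; that theorem's fibrewise counting identity $|\Gamma_i(u,x)\cap\Gamma_j(v,x)|=\sum_k|\Gamma_k(x)|\,|\Gamma_{i-k}(u)\cap\Gamma_{j-k}(v)|$ is exactly your $I_2$-block identity, and it is followed by the same induction down to $A_0$.
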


\begin{Proposition}[\cite{conde2026}, Proposition~18]
Let $G$ be a DDR graph that is not a DMR graph. Then $G \square K_2$ is a DDR graph and is not a DMR graph.
\end{Proposition}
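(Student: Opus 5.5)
The Cartesian product has distance $d_{G\square H}((g_1,h_1),(g_2,h_2))=d_G(g_1,g_2)+d_H(h_1,h_2)$. Hence
\begin{equation*}
A_k(G\square K_2)=A_k(G)\otimes I_2+A_{k-1}(G)\otimes A_1(K_2),
\end{equation*}
with the convention $A_{-1}(G)=0$ and $A_{d+1}(G)=0$, where $d=\operatorname{diam}(G)$. The diameter of $G\square K_2$ is $d+1$. The plan is to use the hypothesis that $G$ is DDR but not DMR twice: a counting argument on shells for the DDR conclusion, and the edge-count characterization of Proposition~\ref{lem: caracterización de DMR} for the failure of DMR.

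\textbf{DDR.} Write the vertices of $G\square K_2$ as $(u,a)$ with $a\in\{0,1\}$. By the distance formula, $|\Gamma_k(u,a)|=|\Gamma_k(u)|+|\Gamma_{k-1}(u)|$. Since $G$ is DDR, this value is independent of $(u,a)$, so $G\square K_2$ is DDR.

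\textbf{Not DMR.} Because $G\square K_2$ is DDR, Proposition~\ref{lem: caracterización de DMR} applies to it as well as to $G$. Since $G$ is not DMR, there is a smallest index $i\ge 1$ such that $\omega_{ii}^G$ is not constant. Fix a root $(u,0)$ and set $F=G\square K_2$. The shell $\Gamma_i^F(u,0)$ is $\Gamma_i(u)\times\{0\}\cup\Gamma_{i-1}(u)\times\{1\}$. Edges of $F$ inside this shell come in two kinds.
\begin{itemize}
\item Copy edges contribute $\omega_{ii}^G(u)+\omega_{i-1,i-1}^G(u)$.
\item Rung edges $(w,0)(w,1)$ would need $w\in\Gamma_i(u)\cap\Gamma_{i-1}(u)$, which is empty.
\end{itemize}
So $\omega^F_{ii}(u,0)=\omega_{ii}^G(u)+\omega_{i-1,i-1}^G(u)$, and by symmetry the same holds for roots $(u,1)$. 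By the minimality of $i$, the term $\omega_{i-1,i-1}^G$ is constant; for $i=1$ it equals $\omega_{00}=0$. Hence $\omega^F_{ii}$ is not constant, and Proposition~\ref{lem: caracterización de DMR} shows $F$ is not DMR.

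\textbf{Main obstacle.} The delicate step is describing the shell $\Gamma_i^F(u,0)$ correctly and confirming that rung edges never join two vertices of the same shell. This is where the parity structure of $K_2$ matters, and it can be checked directly from the distance formula. The index $i$ only ranges up to $d$, which lies within the diameter $d+1$ of $F$, so Proposition~\ref{lem: caracterización de DMR} is legitimately applicable at that index.
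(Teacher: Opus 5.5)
Your argument is correct and is essentially the paper's route. The proposition is only quoted from Conde et al.\ here, but it is the $H=K_2$ case of two things the paper does prove: DDR closure from Lemma~\ref{lem:formula_cartesiana}, and the DMR part of Theorem~\ref{teo: reciprocos cartesiano}, whose formula for $\omega_{ii}^{G\square H}$ reduces to your $\omega_{ii}^{G}(u)+\omega_{i-1,i-1}^{G}(u)$ because every $\omega^{K_2}_{jj}$ vanishes. Your minimal-index argument is the contrapositive of the paper's induction, and, like the paper, it relies on the Diego--Fiol $\omega_{ii}$-characterization of DMR among DDR graphs.
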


We extend the structural characterizations of distance-related regularities under the Cartesian product $G \square H$. The distance metric $d_{G \square H}((u,x),(v,y)) = d_G(u,v) + d_H(x,y)$ implies that the $i$-th distance matrix satisfies the following formula.

\begin{lemma}
\label{lem:formula_cartesiana}
Let $G$ and $H$ be connected graphs. The $i$-th distance matrix of the Cartesian product $G \square H$ satisfies
\begin{equation}
    A_i(G \square H) = \sum_{k=0}^{i} A_k(G) \otimes A_{i-k}(H),
\end{equation}
where $A_0 = I$, and $A_k(G)$, $A_{i-k}(H)$ denote the respective distance matrices of the factor graphs.
\end{lemma}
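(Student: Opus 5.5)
The identity follows from comparing the two sides entry by entry, using the additivity of the distance in the Cartesian product. First I will justify the metric formula $d_{G \square H}((u,x),(v,y)) = d_G(u,v) + d_H(x,y)$, which the paper takes as given.

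For the upper bound, concatenate a shortest $u$--$v$ path in $G$, traversed in the fiber $G\times\{x\}$, with a shortest $x$--$y$ path in $H$, traversed in the fiber $\{v\}\times H$. This gives a walk of length $d_G(u,v)+d_H(x,y)$. For the lower bound, note that every edge of $G \square H$ changes exactly one coordinate, and changes it along an edge of the corresponding factor. Projecting any $(u,x)$--$(v,y)$ path onto $G$ yields a $u$--$v$ walk, and projecting onto $H$ yields an $x$--$y$ walk; the numbers of edges in the two projections sum to the length of the path. Hence that length is at least $d_G(u,v)+d_H(x,y)$. Connectedness of both factors makes all these distances finite.

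Next, index rows and columns of the Kronecker products by pairs, so that $(A\otimes B)_{(u,x),(v,y)} = A_{uv}B_{xy}$. Then the $((u,x),(v,y))$ entry of $\sum_{k=0}^{i} A_k(G)\otimes A_{i-k}(H)$ equals $\sum_{k=0}^{i}[d_G(u,v)=k]\,[d_H(x,y)=i-k]$. At most one term is nonzero, namely $k=d_G(u,v)$. That term equals $1$ exactly when $d_G(u,v)\le i$ and $d_H(x,y)=i-d_G(u,v)$, that is, when $d_G(u,v)+d_H(x,y)=i$. By the metric formula this is precisely the $((u,x),(v,y))$ entry of $A_i(G\square H)$.

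For $i$ larger than one or both diameters, the formula remains valid with the convention $A_k=0$ beyond the diameter, so no case split is needed. The only step that needs real care is the projection argument for the lower bound on distance; the rest is entrywise bookkeeping.
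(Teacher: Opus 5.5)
Your proof is correct and follows the paper's route. The paper derives the formula from the additivity $d_{G \square H}((u,x),(v,y)) = d_G(u,v) + d_H(x,y)$ and defers the combinatorial content to Theorem~2.2 of Bre\v{s}ar et al.; you additionally prove the additivity via the projection argument and carry out the entrywise Kronecker comparison explicitly, which makes your version self-contained.
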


This is a matrix reformulation of Theorem 2.2 in \cite{Bresar2019}, which describes the structure of exact distance-p graphs of the Cartesian product in the language of graph unions and direct products. We provide the Kronecker product formulation, which is the form required for the algebraic arguments of this section.

By Lemma~\ref{lem:formula_cartesiana}, verifying that the families of DDR, CDDR and DMR graphs are closed  under the Cartesian product is straightforward.

An immediate and powerful consequence of the matrix convolution established in Lemma~\ref{lem:formula_cartesiana} is the simultaneous diagonalization of the global distance layers for CDDR factors. This allows us to explicitly derive the entire eigenvalue spectrum for the Cartesian distance relations.

\begin{Corollary}\label{coro: espectro cartesiano}
Let $G$ and $H$ be CDDR graphs, whose distance matrices $A_k(G)$ and $A_r(H)$ are simultaneously diagonalized by the eigenvector bases $S_G$ and $S_H$, respectively. Then, $S_G \otimes S_H$ simultaneously diagonalizes the $i$-th  distance matrix $A_i(G \square H)$ for all $0 \leq i \leq \operatorname{diam}(G)+ \operatorname{diam}(H)$. 

Furthermore, if $\lambda_j^{(k)}$ is an eigenvalue of $A_k(G)$ associated with an eigenvector $\mathbf{x}$, and $\mu_m^{(i-k)}$ is an eigenvalue of $A_{i-k}(H)$ associated with an eigenvector $\mathbf{y}$. Then, the eigenvalue $\theta_{j,m}^{(i)}$ of the distance matrix $A_i(G \square H)$ associated with the eigenvector $\mathbf{x} \otimes \mathbf{y}$ is given by
\begin{equation*}
     \theta_{j,m}^{(i)}=\sum_{k=0}^{i} \lambda_j^{(k)} \mu_m^{(i-k)}.
\end{equation*}
\end{Corollary}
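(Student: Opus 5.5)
The plan is to combine the convolution formula of Lemma~\ref{lem:formula_cartesiana} with the mixed-product property of the Kronecker product, exactly as in the proof of Corollary~\ref{coro: autovalores de producto fuerte}. By Theorem~\ref{teo: caracterización de CDDR 2}, the hypotheses give invertible matrices $S_G$ and $S_H$ with $S_G^{-1}A_k(G)S_G = D_k(G)$ diagonal for all $0\le k\le \operatorname{diam}(G)$, and $S_H^{-1}A_r(H)S_H = D_r(H)$ diagonal for all $0 \le r \le \operatorname{diam}(H)$. The case $k=0$ or $r=0$ is trivial, since $A_0=I$ is diagonal in any basis.

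Set $S = S_G\otimes S_H$. It is invertible with $S^{-1}=S_G^{-1}\otimes S_H^{-1}$. Fix $i$ with $0\le i\le \operatorname{diam}(G)+\operatorname{diam}(H)$. I would conjugate the formula of Lemma~\ref{lem:formula_cartesiana} term by term:
\begin{equation*}
S^{-1}A_i(G\square H)S=\sum_{k=0}^{i}\bigl(S_G^{-1}A_k(G)S_G\bigr)\otimes\bigl(S_H^{-1}A_{i-k}(H)S_H\bigr)=\sum_{k=0}^{i}D_k(G)\otimes D_{i-k}(H).
\end{equation*}
This uses $(A\otimes B)(C\otimes D)=(AC)\otimes(BD)$ twice. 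Terms with $k>\operatorname{diam}(G)$ or $i-k>\operatorname{diam}(H)$ vanish, since the corresponding distance matrix is $0$; they are harmlessly diagonal. A Kronecker product of diagonal matrices is diagonal, and a sum of diagonal matrices is diagonal. Hence $S$ simultaneously diagonalizes every $A_i(G\square H)$.

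For the eigenvalue formula, let $\mathbf{x}$ be the $j$-th column of $S_G$ and $\mathbf{y}$ the $m$-th column of $S_H$. Then $A_k(G)\mathbf{x}=\lambda_j^{(k)}\mathbf{x}$ and $A_{i-k}(H)\mathbf{y}=\mu_m^{(i-k)}\mathbf{y}$, so that
\begin{equation*}
A_i(G\square H)(\mathbf{x}\otimes\mathbf{y})=\sum_{k=0}^{i}\bigl(A_k(G)\mathbf{x}\bigr)\otimes\bigl(A_{i-k}(H)\mathbf{y}\bigr)=\Bigl(\sum_{k=0}^{i}\lambda_j^{(k)}\mu_m^{(i-k)}\Bigr)\mathbf{x}\otimes\mathbf{y}.
\end{equation*}
Here we use the convention $\lambda_j^{(k)}=0$ for $k>\operatorname{diam}(G)$ and $\mu_m^{(r)}=0$ for $r>\operatorname{diam}(H)$, which is consistent with $A_k=0$ in those ranges. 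The vectors $\mathbf{x}\otimes\mathbf{y}$ are exactly the columns of $S$, so they form a basis.

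There is no real obstacle. The only point needing care is the bookkeeping of indices outside the diameters, which is handled by the zero-matrix convention above. Lemma~\ref{lem:formula_cartesiana} itself is taken as given.
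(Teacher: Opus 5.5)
Your proposal is correct and is essentially the argument the paper intends. The paper gives no separate proof, presenting the corollary as an immediate consequence of the convolution formula in Lemma~\ref{lem:formula_cartesiana} combined with the Kronecker mixed-product property, just as in its proof of Corollary~\ref{coro: autovalores de producto fuerte}; your explicit zero-matrix convention for indices beyond the diameters is a welcome bit of extra care.
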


\begin{theorem}\label{teo: reciprocos cartesiano}
    If $G \square H$ belongs to one of the classes \emph{DDR, CDDR, DMR, DP or DR}, then both $G$ and $H$ must also belong to that same class.
\end{theorem}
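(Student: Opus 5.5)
The plan is to copy the strong-product strategy (Theorems \ref{teo: reciproco ddr}--\ref{teo: si alguno no es dp el producto no es dp}). The key tool is the additive metric $d_{G\square H}((u,x),(v,y))=d_G(u,v)+d_H(x,y)$, which gives Lemma~\ref{lem:formula_cartesiana}. Throughout I fix a vertex $x\in V(H)$ and compare the pairs $(u,x),(v,x)$ with $u,v\in V(G)$. Each combinatorial quantity in $G\square H$ then expands as a convolution over the possible splits $i=k+(i-k)$. In this sum the only term involving the $G$-quantity at level $i$ is the split $k=i$. Its $H$-coefficient is $|\Gamma_0(x)|=1$, or more generally something involving only distance $0$ in $H$. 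All remaining terms involve $G$-quantities at levels $<i$ and can be handled by strong induction on $i$. So in every case the induction step reads ``new $G$-term $=$ product quantity $-$ lower terms''. Since this coefficient is $1$, the division problem from the strong-product proofs disappears.

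\textbf{DDR.} I would write $|\Gamma_i(u,x)|=\sum_{k=0}^{i}|\Gamma_k(u)|\,|\Gamma_{i-k}(x)|$ and equate this with the same expression at $(v,x)$. Induction then leaves $|\Gamma_i(u)|=|\Gamma_i(v)|$.

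\textbf{CDDR.} I would apply the identity $|\Gamma_i(a)\cap\Gamma_j(b)|=(A_iA_j)_{ab}$ together with Lemma~\ref{lem:formula_cartesiana}. Restricting $A_i(G\square H)A_j(G\square H)$ to the block indexed by $(\cdot,x),(\cdot,x)$ gives
\[
\sum_{k\le i,\ l\le j} \bigl(A_k(G)A_l(G)\bigr)_{uv}\,\bigl(A_{i-k}(H)A_{j-l}(H)\bigr)_{xx}.
\]
The factor $(A_{i-k}A_{j-l})_{xx}$ equals $|\Gamma_{i-k}(x)|\,\delta_{i-k,j-l}$. This forces $k-l=i-j$, so the expansion becomes $\sum_{s\ge0}(A_{i-s}A_{j-s})_{uv}|\Gamma_s(x)|$. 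Commutativity in the product yields the same sum with $i,j$ swapped. I would then induct on $i+j$: all terms with $s\ge1$ cancel by the induction hypothesis, and the $s=0$ term gives $(A_iA_j)_{uv}=(A_jA_i)_{uv}$.

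\textbf{DR.} The same formula works, now using pairs $(u,v)$ and $(z,w)$ at equal distance in $G$. Since $d_{G\square H}((u,x),(v,x))=d_G(u,v)$, the product hypothesis applies directly.

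\textbf{DMR.} Here I would use Proposition~\ref{lem: caracterización de DMR}. DDR of the factors is already known, so it suffices to show that $\omega^G_{ii}(u)$ is constant. The first step is to compute $\omega^{G\square H}_{ii}(u,x)$ by splitting the edges into $G$-edges and $H$-edges. The $G$-edges with both ends at distance $i$ contribute $\sum_{k}\omega^G_{kk}(u)|\Gamma_{i-k}(x)|$. The $H$-edges contribute a symmetric sum, but the $G$-data enter it only through $|\Gamma_k(u)|$, which is constant. The top term is therefore $\omega^G_{ii}(u)\cdot1$, and the lower terms are constant by induction. This also settles $H$ by symmetry. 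An alternative route is through Lemma~\ref{lem: caracterización de DMR con matrices}, expanding $(A_iA_j\circ A_h)\mathbf 1$ and restricting to the $x$-slice. However, the $\omega$ route seems cleaner.

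\textbf{DP.} I expect this to be the main obstacle, because the ``isolate a top term'' trick is spectral rather than combinatorial. First, the CDDR case gives that $G$ and $H$ are CDDR. Then, following Theorem~\ref{teo: si alguno no es dp el producto no es dp}, suppose $H$ is not DP. Proposition~\ref{prop: caracterizacion de DP} then gives indices $j,k$ with $\mu^{(l)}_j=\mu^{(l)}_k$ for $l<i$ and $\mu^{(i)}_j\neq\mu^{(i)}_k$, where $i$ is minimal. I would pair both with the all-ones eigenvector of $G$, whose eigenvalues $\lambda^{(t)}_1=|\Gamma_t|$ are nonnegative with $\lambda_1^{(0)}=1$. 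By Corollary~\ref{coro: espectro cartesiano}, the adjacency eigenvalues $\lambda^{(0)}_1\mu^{(1)}+\lambda^{(1)}_1\mu^{(0)}$ then agree. The $i$-th eigenvalues $\sum_t\lambda^{(t)}_1\mu^{(i-t)}$ differ exactly by $\lambda^{(0)}_1(\mu^{(i)}_j-\mu^{(i)}_k)\neq0$. Proposition~\ref{prop: caracterizacion de DP} then contradicts the assumption that $G\square H$ is DP, and the argument for $G$ is symmetric. The delicate point is checking that Proposition~\ref{prop: caracterizacion de DP} applies with the specific diagonalizing basis $S_G\otimes S_H$. I would argue this as follows: condition (2) is basis-independent once all $A_i$ are simultaneously diagonal, since it is equivalent to every $A_i$ acting as a scalar on each eigenspace of $A_1$.
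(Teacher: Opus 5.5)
Your proposal is correct and follows essentially the paper's proof: convolution expansions from Lemma~\ref{lem:formula_cartesiana} on a fixed $x$-slice, strong induction that isolates the top term with coefficient $|\Gamma_0(x)|=1$, the $\omega$-decomposition into $G$-edges and $H$-edges for DMR, and the all-ones pairing argument carried over from the strong product for DP. Your DR step is actually cleaner than the paper's. Restricting to pairs $(u,x),(v,x)$ gives the valid convolution $\sum_{s}|\Gamma_{i-s}(u)\cap\Gamma_{j-s}(v)|\,|\Gamma_s(x)|$, whereas the paper's general formula for $(u,x),(v,y)$, which sums only over equal $G$-indices, is not correct as stated.
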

\begin{proof}
    For \textbf{DDR}, the shell size follows the expansion $$|\Gamma_i(u,x)| = \sum_{k=0}^i |\Gamma_k(u)||\Gamma_{i-k}(x)|$$. By strong induction, terms for $k < i$ cancel out, isolating $|\Gamma_i(u)| = |\Gamma_i(v)|$.

    For \textbf{CDDR}, the intersection of distance layers satisfies the convolution-like relation:
    \begin{equation*}
        |\Gamma_i(u,x) \cap \Gamma_j(v,x)| = \sum_{k=0}^i |\Gamma_k(x)| \cdot |\Gamma_{i-k}(u) \cap \Gamma_{j-k}(v)|.
    \end{equation*}
    Applying strong induction on $i$ and $j$, the sum is dominated by the boundary terms where $k=0$ (since $\Gamma_0(x)=\{x\}$). Canceling the identical inductive sums for $k > 0$ isolates the CDDR property $|\Gamma_i(u) \cap \Gamma_j(v)| = |\Gamma_j(u) \cap \Gamma_i(v)|$.

    For \textbf{DMR}, the parameter $\omega_{ii}^{G \square H}$ is expressed as:
    \begin{equation*}
        \omega_{ii}^{G \square H}(u,x) = \sum_{k=0}^i |\Gamma_k(u)| \omega^H_{i-k,i-k}(x) + \sum_{k=0}^i |\Gamma_k(x)| \omega^G_{i-k,i-k}(u).
    \end{equation*}
    Once the parameters for $j < i$ are proven vertex-independent by induction, this formula directly implies that $\omega_{ii}^G$ must also be vertex-independent.

    For \textbf{DP}, the eigenvalues of $A_1(G \square H)$ are $\theta = \lambda + \mu$. The proof of the converse follows by total analogy to the strong product case: any spectral collision in one factor graph forces a corresponding collision in the product graph. 

    Finally, for \textbf{DR}, we utilize the intersection convolution:
    \begin{equation*}
        |\Gamma_i(u, x) \cap \Gamma_j(v, y)| = \sum_{k=0}^{\min(i,j)} |\Gamma_k(u) \cap \Gamma_k(v)| \cdot |\Gamma_{i-k}(x) \cap \Gamma_{j-k}(y)|.
    \end{equation*}
    By strong induction on $i$ and $j$, and given the proven DDR and CDDR status of the factors, the independence of the intersection numbers on the vertex choice is isolated, confirming that $G$ and $H$ are DR.
\end{proof}
\begin{Example}
    Consider the Cartesian product $C_4 \square K_4$. Both the cycle $C_4$ and the complete graph $K_4$ are distance-regular, and hence DP graphs. 

    Let $v_2 = (1,1,1,1)^T$ and $v_1 = (1,-1,1,-1)^T$ be eigenvectors of $C_4$. Their respective eigenvalues for the adjacency matrix $A_1(C_4)$ are $2$ and $-2$, while for the distance-2 matrix $A_2(C_4)$, both yield an eigenvalue of $1$. Similarly, let $u_2 = (1,1,1,1)^T$ and $u_1 = (1,-1,0,0)^T$ be eigenvectors of $K_4$. Their respective eigenvalues for $A_1(K_4)$ are $3$ and $-1$. Since the diameter of $K_4$ is $1$, $A_2(K_4)$ is the zero matrix, yielding an eigenvalue of $0$ for both eigenvectors.

    In the Cartesian product $C_4 \square K_4$, the eigenvectors $v_2 \otimes u_1$ and $v_1 \otimes u_2$ share the same eigenvalue for the adjacency matrix $A_1(C_4 \square K_4)$:
    \begin{align*}
        \theta_{1}(v_2 \otimes u_1) &= 2 + (-1) = 1, \\
        \theta_{1}(v_1 \otimes u_2) &= -2 + 3 = 1.
    \end{align*}
    However, applying the structural convolution formula for the distance-2 matrix in the Cartesian product, their eigenvalues diverge:
    \begin{align*}
        \theta_{2}(v_2 \otimes u_1) &= 1 + 0 + (2)(-1) = -1, \\
        \theta_{2}(v_1 \otimes u_2) &= 1 + 0 + (-2)(3) = -5.
    \end{align*}
    Therefore, $C_4 \square K_4$ is not a DP graph.
\end{Example}

As demonstrated by this counterexample, the classes of DP and DR graphs are not closed under the Cartesian product. However, the following theorem establishes a sufficient condition for the Cartesian product of DP graphs to preserve this property.

\begin{theorem}\label{teo: condicion suficiente DP cartesiano}
Let $G$ and $H$ DP graphs. If the map $\Phi: \text{Spec}(G) \times \text{Spec}(H) \to \mathbb{R}$ given by
\begin{equation*}
    \Phi(\lambda, \mu) =  \lambda +\mu
\end{equation*}
is injective, then  $G \square H$ is DP.
\end{theorem}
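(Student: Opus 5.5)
The plan is to verify criterion (2) of Proposition~\ref{prop: caracterizacion de DP} for $G \square H$. By Proposition~\ref{Prop: cddr square cddr = cddr}, $G \square H$ is CDDR, since DP graphs are CDDR. By Corollary~\ref{coro: espectro cartesiano}, the matrix $S_G \otimes S_H$ simultaneously diagonalizes every $A_i(G \square H)$. The eigenvalue on $\mathbf{x}_j \otimes \mathbf{y}_m$ is
\[
\theta^{(i)}_{j,m} = \sum_{k=0}^{i} \lambda_j^{(k)} \mu_m^{(i-k)}.
\]
For $i=1$ this reduces to $\lambda_j^{(1)} + \mu_m^{(1)} = \Phi(\lambda_j^{(1)}, \mu_m^{(1)})$, using $\lambda^{(0)} = \mu^{(0)} = 1$.

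Now take two columns of $S_G \otimes S_H$, indexed by $(j,m)$ and $(j',m')$, with $\theta^{(1)}_{j,m} = \theta^{(1)}_{j',m'}$. Injectivity of $\Phi$ on $\operatorname{Spec}(G) \times \operatorname{Spec}(H)$ gives $\lambda_j^{(1)} = \lambda_{j'}^{(1)}$ and $\mu_m^{(1)} = \mu_{m'}^{(1)}$. Since $G$ is DP, Proposition~\ref{prop: caracterizacion de DP} applied to $G$ yields $\lambda_j^{(k)} = \lambda_{j'}^{(k)}$ for all $k$. The same reasoning in $H$ yields $\mu_m^{(r)} = \mu_{m'}^{(r)}$ for all $r$, with the convention that eigenvalues of zero matrices beyond the diameter are $0$. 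Substituting into the convolution formula gives $\theta^{(i)}_{j,m} = \theta^{(i)}_{j',m'}$ for every $i$. Hence criterion (2) holds, and $G \square H$ is DP.

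The main obstacle is making precise that criterion (2) is stated relative to one fixed simultaneous diagonalizing basis, and that it is legitimate to use the particular basis $S_G \otimes S_H$. The condition in (2) is basis-independent. It is equivalent to saying that each eigenspace of $A_1$ is contained in an eigenspace of every $A_i$, and hence that every $A_i$ is a polynomial in $A_1$ via Lagrange interpolation. So checking it in one simultaneous eigenbasis suffices, and I would either note this or argue directly. A secondary subtlety is that $\Phi$ is injective as a map on distinct eigenvalue values, not on multiset entries. Equal values of $\lambda^{(1)}$ inside $G$ are exactly what the DP hypothesis on $G$ handles, so the argument combines injectivity across the factors with DP within each factor.
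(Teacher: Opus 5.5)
Your proof is correct and takes the same route as the paper. The paper simply defers to the strong-product argument: injectivity of $\Phi$ forces each eigenspace of $A_1(G\square H)$ to be spanned by tensor products $\mathbf{x}_j\otimes\mathbf{y}_m$ with fixed factor eigenvalues, and then Corollary~\ref{coro: espectro cartesiano} together with Proposition~\ref{prop: caracterizacion de DP} yields DP. Your write-up is more explicit than the paper's, since you state that the DP hypothesis on each factor makes $\lambda^{(k)}$ and $\mu^{(r)}$ constant on those eigenspaces, and you justify that criterion (2) does not depend on the choice of simultaneous eigenbasis.
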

\begin{proof}
    The result follows by a completely analogous algebraic argument to the one presented in Theorem \ref{teo: condicion suficiente para DP}.
\end{proof}

\begin{Corollary}\label{coro: Kn DP}
    Let $G$ be a DP graph with eigenvalues $\lambda_1 \geq \lambda_2 \geq \dots \geq \lambda_n$. Then, for any $m > \lambda_1 - \lambda_n$, the Cartesian product $G \square K_m$ is a DP graph.
\end{Corollary}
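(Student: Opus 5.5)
The plan is to apply Theorem~\ref{teo: condicion suficiente DP cartesiano} with $H = K_m$. Both $G$ and $K_m$ are DP, the latter because complete graphs are distance-regular. So it suffices to show that $\Phi(\lambda,\mu)=\lambda+\mu$ is injective on $\operatorname{Spec}(G)\times\operatorname{Spec}(K_m)$, where the spectra are regarded as sets of distinct eigenvalues. Recall that $\operatorname{Spec}(K_m)=\{(m-1)^1,(-1)^{m-1}\}$, so $\mu$ ranges over the two values $m-1$ and $-1$ (for $m\ge 2$; the hypothesis $m>\lambda_1-\lambda_n\ge 0$ gives $m\ge 1$, and $m=1$ is trivial since then $G\square K_1\cong G$).

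Suppose $\lambda+\mu=\lambda'+\mu'$ with $\lambda,\lambda'\in\operatorname{Spec}(G)$ and $\mu,\mu'\in\{m-1,-1\}$. We split into two cases.

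\textbf{Case $\mu=\mu'$.} Then $\lambda=\lambda'$ immediately.

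\textbf{Case $\mu\neq\mu'$.} Without loss of generality $\mu=m-1$ and $\mu'=-1$. Then $\lambda+m-1=\lambda'-1$, so $m=\lambda'-\lambda\le\lambda_1-\lambda_n$. This contradicts the hypothesis $m>\lambda_1-\lambda_n$.

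Hence $\Phi$ is injective, and Theorem~\ref{teo: condicion suficiente DP cartesiano} gives that $G\square K_m$ is DP.

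The only delicate point is conceptual rather than computational. Injectivity must be read on distinct eigenvalues, not on the multiset $\operatorname{Spec}$: repeated eigenvalues of $G$ or of $K_m$ do not break the DP property. This is because, by Proposition~\ref{prop: caracterizacion de DP}, each factor being DP already forces its higher distance matrices to be constant on its own eigenspaces. The analogue of Theorem~\ref{teo: condicion suficiente para DP} therefore goes through with the eigenspaces of $A_1(G\square K_m)$ equal to tensor products of factor eigenspaces. On these eigenspaces, Corollary~\ref{coro: espectro cartesiano} shows every $A_i(G\square K_m)$ acts as a scalar. I would remark on this interpretation explicitly, and otherwise the argument is a direct verification.
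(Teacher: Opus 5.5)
Your proof is correct and follows the paper's argument: apply Theorem~\ref{teo: condicion suficiente DP cartesiano} with $H=K_m$, and rule out the only possible collision $\lambda+(m-1)=\lambda'-1$ using $m>\lambda_1-\lambda_n$. Your extra remarks make explicit two points the paper leaves implicit. The first is that injectivity of $\Phi$ is meant on distinct eigenvalues, so repeated eigenvalues of $G$ or $K_m$ are harmless. The second is the degenerate case $m=1$.
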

\begin{proof}
    Using that $\operatorname{Spec}(K_m) = \{m-1, -1\}$, we have that $m > \lambda_1 - \lambda_n \geq \lambda_i - \lambda_j$ for all $1 \leq i,j \leq n$. Then,
    \begin{equation*}
        (m-1) + \lambda_j > \lambda_i - 1 \quad \text{for all } 1 \leq i,j \leq n.
    \end{equation*}
    Therefore, by Theorem \ref{teo: condicion suficiente DP cartesiano}, $G \square K_m$ is DP.
\end{proof}

\begin{figure}

\begin{center}
\begin{tikzpicture}[scale=1]
    
    \coordinate (v1) at (-2, 2);
    \coordinate (v2) at (-0.5, 1);
    \coordinate (v3) at (-0.5, -1);
    \coordinate (v4) at (-2, -2);
    \coordinate (v5) at (-1.25, 0);

    \coordinate (w1) at (2, 2);
    \coordinate (w2) at (0.5, 1);
    \coordinate (w3) at (0.5, -1);
    \coordinate (w4) at (2, -2);
    \coordinate (w5) at (1.25, 0);
    
    \foreach \punto in {1,...,5}
        \fill (v\punto) circle(3pt);
	
	\foreach \punto in {1,...,5}
        \fill (w\punto) circle(3pt);
	
%

    \draw[thick] (v1) -- (v2);
    \draw[thick] (v2) -- (v3);
    \draw[thick] (v3) -- (v4);
    \draw[thick] (v4) -- (v1);
    \draw[thick] (v5) -- (v1);
    \draw[thick] (v5) -- (v2);
    \draw[thick] (v5) -- (v3);
    \draw[thick] (v5) -- (v4);
    
    \draw[thick] (w1) -- (w2);
    \draw[thick] (w2) -- (w3);
    \draw[thick] (w3) -- (w4);
    \draw[thick] (w4) -- (w1);
    \draw[thick] (w5) -- (w1);
    \draw[thick] (w5) -- (w2);
    \draw[thick] (w5) -- (w3);
    \draw[thick] (w5) -- (w4);
    
    \draw[thick] (w1) -- (v1);
    \draw[thick] (w2) -- (v2);
    \draw[thick] (w3) -- (v3);
    \draw[thick] (w4) -- (v4);

\end{tikzpicture}
\end{center}

\caption{A DP graph which is not DMR}\label{fig: CDDR y no DMR} 
\end{figure}

In \cite{conde2026}, the authors introduced a graph that is DP, and therefore CDDR, but neither DMR nor DR (see Figure \ref{fig: CDDR y no DMR}). Let us call this graph $C$. We can easily obtain that the spectrum of $C$ is given by $$\operatorname{Spec}(C) = \left\{ 4^1, \left(\frac{1+\sqrt{17}}{2}\right)^1, 1^2, -1^4, \left(\frac{1-\sqrt{17}}{2}\right)^1, -3^1 \right\}.$$

Then, by Corollary \ref{coro: Kn DP} and Theorem \ref{teo: reciprocos cartesiano}, the Cartesian product $C \square K_m$ is DP and not DMR for $m > 7$. The next statement makes this observation precise.

\begin{Corollary}
Let $n \in \mathbb{N}$. There exists a CDDR graph $G$ with $|V(G)| \geq n$ such that $G$ is DP but not DMR.
\end{Corollary}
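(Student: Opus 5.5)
The witness will be $G = C \square K_m$ with $m$ large, where $C$ is the ten-vertex graph of Figure~\ref{fig: CDDR y no DMR}. The argument has three steps: first, that $G$ is DP (and therefore CDDR); second, that $G$ is not DMR; third, that $|V(G)| \geq n$.

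\textbf{$G$ is DP.} By \cite{conde2026}, $C$ is DP. From the spectrum of $C$ listed above we have $\lambda_1 = 4$ and $\lambda_{10} = -3$, so $\lambda_1 - \lambda_{10} = 7$. Corollary~\ref{coro: Kn DP} then gives that $C \square K_m$ is DP for every $m > 7$. The one point to check here is that the hypothesis of that corollary really yields injectivity of $\Phi(\lambda,\mu) = \lambda + \mu$ on $\operatorname{Spec}(C) \times \{m-1,-1\}$. Collisions within a single fibre $\mu$ cannot occur, because the distinct eigenvalues of each factor are distinct. A cross collision would require $\lambda_i + m - 1 = \lambda_j - 1$, that is $m = \lambda_j - \lambda_i \leq 7$, which is excluded when $m > 7$. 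Since every DP graph is CDDR, $G$ is also CDDR.

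\textbf{$G$ is not DMR.} Here I argue by contradiction using Theorem~\ref{teo: reciprocos cartesiano}. If $C \square K_m$ were DMR, that theorem would force both factors to be DMR, in particular $C$. This contradicts the fact from \cite{conde2026} that $C$ is not DMR. This step is the most delicate one, because it relies entirely on the DMR case of Theorem~\ref{teo: reciprocos cartesiano}, whose proof in the paper is only sketched through the formula for $\omega_{ii}^{G\square H}$.

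\textbf{Fallback for the non-DMR step.} If that case needs independent support, I would argue directly. Corollary~\ref{coro: Kn DP} and the results around it only address the DP side, so the non-DMR side needs its own argument. The plan is:
\begin{itemize}
  \item Pick the smallest $i$ and two vertices $u, v$ of $C$ with $\omega_{ii}^C(u) \neq \omega_{ii}^C(v)$. Such a choice exists by Proposition~\ref{lem: caracterización de DMR}, since $C$ is DDR but not DMR.
  \item Fix a vertex $x$ of $K_m$ and compare $\omega_{ii}^{G}(u,x)$ with $\omega_{ii}^{G}(v,x)$ using the expansion of the $i$-th distance shell given by Lemma~\ref{lem:formula_cartesiana}.
  \item By the minimality of $i$ and the DDR property, all terms involving lower indices agree for $u$ and $v$.
  \item The surviving difference is $\omega_{ii}^C(u) - \omega_{ii}^C(v)$, since $|\Gamma_0(x)| = 1$ and $K_m$ contributes no other index-$i$ terms for $i \geq 2$. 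This difference is nonzero, so Proposition~\ref{lem: caracterización de DMR} shows $G$ is not DMR.
\end{itemize}

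\textbf{Order.} We have $|V(C \square K_m)| = 10m$. Choosing $m > \max\{7, n/10\}$ makes every requirement hold at once, which completes the construction.
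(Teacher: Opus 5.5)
Your proposal is correct and follows essentially the same route as the paper. You use the witness $C \square K_m$ with $m>7$, so that $|V| = 10m$ can be made arbitrarily large. You get DP from Corollary~\ref{coro: Kn DP}, using $\lambda_1-\lambda_{10}=4-(-3)=7$, and non-DMR from the DMR case of Theorem~\ref{teo: reciprocos cartesiano}. Your fallback is also sound and makes the non-DMR step self-contained: at $i=1$ the product formula gives $\omega_{11}^{C\square K_m}(u,x)=\binom{m-1}{2}+\omega_{11}^{C}(u)$, and $\omega_{11}^C$ equals $4$ at a hub and $2$ at a rim vertex of $C$.
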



\section{Direct Product}

Having analyzed the metric structures of the strong and Cartesian products, we now turn our attention to the direct product (also known as the tensor or categorical product), denoted by $G \times H$. While the previous operations exhibit robust preservation of regularity properties, the direct product is inherently more restrictive due to the parity constraints imposed on its walk structure. 

The following theorem provides a comprehensive counterexample to the preservation of several key regularity classes. To establish this, we rely on two distinct structural examples: the graph $C$ (illustrated in Figure 4), which is known to be DP, and the DMR graph presented by \cite{DiFi2017} (illustrated in Figure 5).

\begin{figure}[h]
\begin{center}
    \begin{tikzpicture}[scale=1, every node/.style={circle, fill=black, inner sep=2pt}]
    \node (n10) at (0, 3) {};
    \node (n3)  at (-2.5, 2.5) {};
    \node (n11) at (2.5, 2.5)  {};
    \node (n1)  at (0, 1.2) {};
    \node (n2)  at (-3, 0.2)  {};
    \node (n9)  at (-1.2, 0.1) {};
    \node (n12) at (1.2, 0.1)  {};
    \node (n6)  at (3, 0.2)  {};
    \node (n4)  at (0, -1.2)  {};
    \node (n7)  at (-2.5, -2.5)  {};
    \node (n5)  at (2.5, -2.5)  {};
    \node (n8)  at (0, -3)  {};

    \draw (n10) -- (n3); \draw (n10) -- (n11); 
    \draw (n10) -- (n12); \draw (n10) -- (n6); \draw (n10) -- (n2);
    \draw (n3) -- (n2); \draw (n3) -- (n1); \draw (n3) -- (n9); \draw (n3) -- (n4);
    \draw (n11) -- (n6); \draw (n11) -- (n1); \draw (n11) -- (n12); \draw (n11) -- (n9);
    \draw (n2) -- (n7); \draw (n2) -- (n8); \draw (n2) -- (n1);
    \draw (n6) -- (n5);  \draw (n6) -- (n8); \draw (n6) -- (n4);
    \draw (n7) -- (n8); \draw (n7) -- (n9); \draw (n7) -- (n12); \draw (n7) -- (n4);
    \draw (n5) -- (n8); \draw (n5) -- (n12); \draw (n5) -- (n1); \draw (n5) -- (n4);
    \draw (n1) -- (n4);
    \draw (n9) -- (n12); \draw (n9) -- (n8);
\end{tikzpicture}
\end{center}
\caption{A DMR which is not vertex-transitive.}\label{fig: contraejemplo producto directo}
\end{figure}

\begin{theorem}
    The classes of DDR, CDDR, DMR, and DP graphs are not closed under the direct product operation $G \times H$.
\end{theorem}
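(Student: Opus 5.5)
Since the statement is a non-closure result, the proof will consist of explicit counterexamples. The paper has announced two test graphs: the DP graph $C$ of Figure~\ref{fig: CDDR y no DMR} and the DMR graph $D$ of Figure~\ref{fig: contraejemplo producto directo} from \cite{DiFi2017}. My plan is to form direct products of each with a small distance-regular graph, most naturally $K_2$ or $K_3$, and show that the product fails the relevant property. The key mechanism is parity. When $G$ is not bipartite, $G \times K_2$ is the bipartite double cover. In it, $d((u,0),(v,b))$ is the length of a shortest $u$--$v$ walk in $G$ whose parity equals $b$. Distances in $G\times H$ are governed by walks of equal length in both factors, not by $d_G$ and $d_H$ separately. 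So the distance layers of the product are no longer elements of $\operatorname{span}(\mathcal{A}_G)\otimes\operatorname{span}(\mathcal{A}_H)$, and regularity can break.

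\textbf{Step 1 (DDR).} For a vertex $(u,0)$ of $G\times K_2$, I would compute the layer sizes $|\Gamma_i(u,0)|$ as the number of vertices $v$ whose shortest odd or even walk from $u$ has length exactly $i$. For the graph $D$, which is DMR but not vertex-transitive, I expect two vertices $u,v$ with different odd-girth behaviour: a vertex lying on a triangle versus one whose shortest closed odd walk is longer. This changes $|\Gamma_2|$ or $|\Gamma_3|$, since the copy $(u,1)$ is reached at distance equal to the shortest odd closed walk through $u$. If this works, $D\times K_2$ is not DDR. By the hierarchy $\{\text{DP}\}\subsetneq\{\text{CDDR}\}\subsetneq\{\text{DDR}\}$ and $\{\text{DMR}\}\subsetneq\{\text{DDR}\}$, it is then simultaneously not DDR, not CDDR, not DP and not DMR. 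A single example would therefore settle all four classes, since $D$ itself lies in DDR and DMR.

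\textbf{Step 2 (CDDR and DP).} For the classes CDDR and DP, the factors must themselves belong to those classes. I would therefore use $C$, which is DP and hence CDDR, together with $K_2$ or $K_3$, both DR. I would again compute the odd/even walk distances in $C$. The vertex $v_5$ (the apex of a wheel) lies on triangles, and so do the rim vertices, so more care is needed. Comparing, for instance, the distance from $(v_1,0)$ to $(v_1,1)$ with that from $(v_5,0)$ to $(v_5,1)$ is the first test. If those agree, I would instead compute $|\Gamma_i|$ for two vertices directly, or check a commutator $A_1A_2-A_2A_1$ in $C\times K_2$.

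\textbf{Main obstacle.} The hard part is guaranteeing an actual failure rather than an accidental symmetry. The graphs are small, 10 and 12 vertices, so the verification is a finite computation. However, I must pick vertex pairs whose walk-parity profiles genuinely differ. If $D\times K_2$ and $C\times K_2$ happen to remain DDR, my fallback is $K_3$ as the second factor. In $G\times K_3$, walks must also avoid staying put in the $K_3$ coordinate, which creates further asymmetry. I would then exhibit explicit layer sizes, or a nonzero entry of $A_iA_j-A_jA_i$, for the chosen pair.
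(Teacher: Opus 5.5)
Your route is the paper's: take $C\times K_2$ and $D\times K_2$ and show they are not DDR, which excludes them from every subclass. However, the whole content of such a proof is the concrete witness. Your plan leaves that witness conditional, and it bets on a mechanism that occurs in neither graph. Every vertex of $D$ lies on exactly three triangles, and every vertex of $C$, hub or rim, lies on a triangle. Hence $d((u,0),(u,1))=3$ for all $u$ in both products. Comparing odd girths, or $(v_1,0)$--$(v_1,1)$ against $(v_5,0)$--$(v_5,1)$, therefore detects nothing, and the layer-$3$ sizes also agree.

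The discrepancy is in the second layer. In $G\times K_2$ the vertex $(x,0)$ is at distance $2$ from $(u,0)$ exactly when $x\neq u$ and $x,u$ have a common neighbor in $G$. So
\[
|\Gamma_2(u,0)| = |\Gamma_2(u)| + \#\{x\in\Gamma_1(u) : \text{the edge } ux \text{ lies on a triangle}\}.
\]
In $C$ the only triangle-free edges are the four edges $v_iw_i$ joining the two wheels. Thus $|\Gamma_2(v_5,0)|=4+4=8$, while $|\Gamma_2(v_1,0)|=3+4=7$. In $D$ the triangle-free edges form a matching covering eight of the twelve vertices. Each of those eight has neighborhood inducing $P_4\cup K_1$, and each of the other four has neighborhood inducing $P_3\cup K_2$. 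So $|\Gamma_2(u,0)|$ takes both values $10$ and $11$. This is exactly the failure of $\mathbf{1}$ to be an eigenvector of $A_2$ that the paper cites. With these witnesses your argument is complete, and the $K_3$ fallback is unnecessary.

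Separately, your claim that $D\times K_2$ alone settles all four classes is true, but not for the reason you give. $D$ being DDR and DMR only handles those two classes, since non-closure of CDDR and DP requires factors in CDDR and DP. The claim does hold, because $D$ is connected, $5$-regular and of diameter $2$. Then $A_2=J-I-A_1$ is a polynomial in $A_1$ by Hoffman's theorem, so $D$ is DP and hence CDDR. With that observation one example suffices, which is slightly more economical than the paper's use of both $C$ and $D$. As your proposal is written, though, you still need Step 2.
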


\begin{proof}
    To prove the non-closure of these classes, we examine the direct product of the complete graph $K_2$ with our two specified counterexamples. Note that $K_2$ is distance-regular, representing the highest tier of metric regularity.

    First, consider the graph $C$ defined in Figure 4. Because $C$ is a DP graph, it is inherently CDDR and DDR. However, a direct inspection of the distance-2 matrix $A_2(C \times K_2)$ reveals that the all-ones vector $\mathbf{1}$ is no longer an eigenvector. This algebraic obstruction confirms that the product $C \times K_2$ is not DDR. Since DDR is a necessary condition for both the CDDR and DP properties, $C \times K_2$ fails to inherit these regularities, proving that the DDR, CDDR, and DP classes are not closed under the direct product.

    Second, to address the DMR class, we utilize the graph presented in \cite{DiFi2017} (see Figure 5), which constitutes an example of a DMR graph that lacks vertex-transitivity. It is straightforward to verify that the direct product of this graph with $K_2$ also fails to preserve Distance-Degree Regularity (DDR), as the all-ones vector $\mathbf{1}$ is again not an eigenvector of its distance matrices. Consequently, this product cannot be DMR, confirming that the class of DMR graphs is also not closed under the direct product operation.
\end{proof}

\subsection{Distance Walk Regular Graphs}

The failure of the products $C \times K_2$ and the aforementioned DMR graph to preserve even the most basic degree regularity suggests that the standard definitions of CDDR or DDR are insufficient to guarantee inheritance under the direct product operation. The sensitivity of the direct product to path parity requires a more robust form of structural invariance. This motivates the following definition.

\begin{Definition}
    A connected graph $G$ is said to be \textbf{Distance Walk Regular (DWR)} if, for any two vertices $u, v \in V(G)$, the existence of a walk of length $k$ between $u$ and $v$ depends only on the distance $d(u,v)$. Equivalently, $G$ is DWR if for every $k \in \mathbb{N}$, the support of the $k$-th power of the adjacency matrix belongs to the span of the distance matrices:
    \begin{equation*}
        \operatorname{supp}(A_1^k) \in \operatorname{span}\mathcal{A},
    \end{equation*}
    where $\mathcal{A}=\{I, A_1, \dots, A_d\}$ is the set of distance matrices of $G$.
\end{Definition}

Before exploring the preservation of regularity under the product $G \times H$, we establish the position of DWR graphs within the existing hierarchy of metric regularity.

\begin{theorem}
    Every DR graph is DWR.
\end{theorem}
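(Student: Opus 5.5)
The plan is to use the fact that the Bose--Mesner algebra of a distance-regular graph is closed under matrix multiplication. If $G$ is DR with diameter $d$, then $\operatorname{span}\mathcal{A}$, with $\mathcal{A}=\{I,A_1,\dots,A_d\}$, is a symmetric association scheme. Axiom (e) then says that $A_iA_j\in\operatorname{span}(\mathcal{A})$, and an induction on $k$ gives $A_1^k\in\operatorname{span}(\mathcal{A})$ for every $k\in\mathbb{N}$. Concretely, the three-term recurrence
\[
A_1A_i=b_{i-1}A_{i-1}+a_iA_i+c_{i+1}A_{i+1},
\]
with intersection numbers $b_{i-1},a_i,c_{i+1}$, shows that left multiplication by $A_1$ preserves the span. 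Hence $A_1^k=\sum_{i=0}^d p_{k,i}A_i$ for nonnegative integers $p_{k,i}$, which do not depend on the vertices.

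Next we pass to supports. Since the $A_i$ have pairwise disjoint supports covering all entries (recall $\sum_i A_i=J$), the $(u,v)$ entry of $A_1^k$ equals $p_{k,d(u,v)}$. This is the number of walks of length $k$ from $u$ to $v$, and it depends only on $d(u,v)$. Therefore
\[
\operatorname{supp}(A_1^k)=\sum_{i\,:\,p_{k,i}\neq 0}A_i\in\operatorname{span}\mathcal{A}.
\]
This is exactly the DWR condition: a walk of length $k$ between $u$ and $v$ exists precisely when $p_{k,d(u,v)}>0$. The same entrywise observation can also be phrased through Lemma~\ref{lem: caracterización de DMR con matrices}-style Hadamard products, or through Lemma~\ref{lema: AoB esta en el span}. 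The support of a matrix in the span is the Hadamard product of that matrix with a $0/1$ indicator combination of the $A_i$, and so it stays in the span.

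The only real point to verify is that $A_1^k$ lies in $\operatorname{span}\mathcal{A}$ for \emph{all} $k$, including $k>d$. This is where distance-regularity, rather than mere DDR or CDDR, is used. I would justify it directly from the definition of DR: the number of neighbours of $v$ lying in $\Gamma_{i\pm1}(u)$ or $\Gamma_i(u)$ depends only on $d(u,v)$. This gives the recurrence above, and the induction then closes without further assumptions. Equivalently, one can cite that DR graphs are DP, so each $A_i$ is a polynomial in $A_1$ of degree $i$. Then $\operatorname{span}\mathcal{A}=\operatorname{span}\{I,A_1,\dots,A_1^d\}$, which is an algebra of dimension $d+1$ containing all powers of $A_1$, because the minimal polynomial of $A_1$ has degree $d+1$.
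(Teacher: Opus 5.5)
Your proposal is correct and follows essentially the same route as the paper: write $A_1^k=\sum_i p_{k,i}A_i$ in the Bose--Mesner algebra, then read off $\operatorname{supp}(A_1^k)$ using the fact that the $A_i$ are $0/1$ matrices with pairwise disjoint supports. Your explicit argument that $A_1^k$ lies in the span for every $k$, via the three-term recurrence or the degree-$(d+1)$ minimal polynomial, supplies a step that the paper simply asserts from closure of the algebra.
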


\begin{proof}
    Since $G$ is DR, its distance matrices $\mathcal{A} = \{I, A_1(G), \dots, A_d(G)\}$ form the basis of the Bose-Mesner algebra. Because this algebra is closed under standard matrix multiplication, any power of the adjacency matrix inherently belongs to $\operatorname{span}(\mathcal{A})$. Thus, for any $k \in \mathbb{N}$, we can uniquely express $A_1(G)^k$ as:
    \begin{equation*}
        A_1(G)^k = \sum_{i=0}^d b_i A_i(G).
    \end{equation*}
    The matrix $A_1(G)^k$ counts the number of walks of length $k$ between any pair of vertices, ensuring that all entries are non-negative. Consequently, $b_i \geq 0$ for all $i$. Furthermore, the distance matrices $A_i(G)$ are $\{0,1\}$-matrices with pairwise disjoint supports. Therefore, applying the support function to $A_1(G)^k$ simply projects the coefficients, mapping any strictly positive $b_i > 0$ to $1$, while null coefficients remain $0$. 
    
    This yields $\operatorname{supp}(A_1(G)^k) = \sum_{i=0}^d \delta_i A_i(G)$ for some $\delta_i \in \{0,1\}$, which explicitly shows that $\operatorname{supp}(A_1(G)^k) \in \operatorname{span}(\mathcal{A})$. Thus, $G$ is DWR.
\end{proof}

\begin{theorem}
    Every bipartite graph is a DWR graph.
\end{theorem}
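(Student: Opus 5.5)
The natural approach is to characterize exactly when a walk of length $k$ joins two vertices of a connected bipartite graph. That characterization will show the existence question depends only on $d(u,v)$ and the parity of $k$, which is precisely the DWR condition. Let $G$ be connected and bipartite with bipartition $V(G)=X\cup Y$ and diameter $d$. The goal is to prove that for $u,v\in V(G)$ and $k\in\mathbb{N}$, a $u$–$v$ walk of length $k$ exists if and only if
\[
k\ge d(u,v) \quad\text{and}\quad k\equiv d(u,v)\pmod 2 .
\]

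\textbf{Necessity.} Every edge joins $X$ to $Y$, so along any walk the side alternates at each step. Hence the length of any $u$–$v$ walk has the same parity as the number of side changes, which is fixed by whether $u,v$ lie on the same side. A shortest path is itself a walk of length $d(u,v)$, so every walk length is congruent to $d(u,v)$ modulo $2$. Moreover, every walk contains a path, so its length is at least $d(u,v)$.

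\textbf{Sufficiency.} Take a shortest path of length $d(u,v)$ and pad it by traversing some edge back and forth $(k-d(u,v))/2$ times. An edge is available because $G$ is connected; if $G=K_1$ then $u=v$ and only $k=0$ matters, which is trivial. Padding in this way produces a walk of exactly length $k$.

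With this characterization, for each fixed $k$ we get
\[
\operatorname{supp}(A_1^k)=\sum_{\substack{0\le i\le \min(k,d)\\ i\equiv k \ (\mathrm{mod}\ 2)}} A_i(G).
\]
The right-hand side lies in $\operatorname{span}\mathcal{A}$, so $G$ is DWR.

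There is no real obstacle. The only point needing care is the parity claim, namely that all $u$–$v$ walks have lengths of one parity equal to that of $d(u,v)$. This follows from the alternation of sides, and it is the only place bipartiteness is used.
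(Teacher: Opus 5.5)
Your proof is correct and follows essentially the same route as the paper. Bipartiteness forces every $u$--$v$ walk length to have the parity of $d(u,v)$: you argue this via side alternation, while the paper concatenates the walk with a shortest path to get an odd closed walk. Every longer length of that parity is then realized by padding a shortest path with back-and-forth steps. Padding along an edge at any vertex of the path (for instance at $u$) also covers the case $u=v$, where the paper's ``oscillate along the final edge'' has no final edge to use.
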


\begin{proof}
    Let $G$ be a bipartite graph and let $u, v \in V(G)$ with $d(u,v) = i$. Any walk between $u$ and $v$ must have the same parity as their distance $i$, as otherwise the concatenation of such a walk with a shortest path would form an odd closed walk, contradicting that $G$ is bipartite. Thus, no walk of length $j$ exists if $j \not\equiv i \pmod 2$. 
    
    Conversely, for any non-negative integer $k$, a walk of length $i+2k$ trivially exists by traversing a shortest path from $u$ to $v$ and oscillating $k$ times along its final edge. Consequently, the existence of a walk of length $m$ between any two vertices depends strictly on whether $m \geq d(u,v)$ and $m \equiv d(u,v) \pmod 2$. Therefore, $G$ is DWR.
\end{proof}

The strength of the DWR condition lies in its ability to capture structural symmetry without requiring strict degree regularity. For instance, path graphs $P_n$ for $n \geq 3$ are bipartite (and hence DWR) but trivially fail to be regular. This guarantees the existence of arbitrary large distance walk regular structures that lack metric regularity:

\begin{Corollary}
    For any $n, d \in \mathbb{N}$, there exists a graph $G$ with $|V(G)| \geq n$ and $\operatorname{diam}(G) \geq d$ such that $G$ is DWR but not DDR.
\end{Corollary}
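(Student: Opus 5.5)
The corollary follows from the preceding theorem (every bipartite graph is DWR), together with an explicit family of bipartite graphs that are large, have large diameter, and are not DDR. I would use paths. Given $n,d \in \mathbb{N}$, set $m = \max\{n, d+1, 3\}$ and take $G = P_m$, the path on vertices $v_1 \sim v_2 \sim \dots \sim v_m$. Then $|V(G)| = m \geq n$ and $\operatorname{diam}(G) = m-1 \geq d$.

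\textbf{DWR.} $P_m$ is a tree, so it has no cycles, in particular no odd cycles, and is therefore bipartite. The theorem just proved then gives that $P_m$ is DWR.

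\textbf{Not DDR.} Since $m \geq 3$, we have $|\Gamma_1(v_1)| = 1$ while $|\Gamma_1(v_2)| = 2$. So the distance-one shells already have different sizes, and $P_m$ fails the DDR condition at $i=1$, which in fact shows that $G$ is not even regular.

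The only point needing care is choosing $m$ so that the order and diameter bounds and the condition $m \geq 3$ hold at once; the choice $m = \max\{n, d+1, 3\}$ settles this. Even cycles $C_{2k}$ would not serve as the family, since they are DR and hence DDR. If a non-tree example were preferred, one could instead use a bipartite graph with two vertices of different degrees, such as a long path with a pendant vertex attached, and run the same argument.
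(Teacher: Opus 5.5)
Your proof is correct and is essentially the paper's argument: the paper also uses paths $P_m$ with $m \geq 3$, which are bipartite and hence DWR by the preceding theorem, and which are not regular and hence not DDR; your choice $m=\max\{n,d+1,3\}$ simply makes the order and diameter bounds explicit. One harmless slip in your closing aside: a path with a pendant vertex attached is still a tree, so a genuinely non-tree alternative would be, for example, an even cycle $C_{2k}$ with a pendant vertex attached.
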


\begin{theorem}\label{teo: condicion dwr}
    Let $G$ and $H$ be graphs, not necessarily connected, without isolated vertices. Then their strong product $G \boxtimes H$ is DWR.
\end{theorem}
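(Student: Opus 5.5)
The plan is to describe completely which walk lengths occur between two vertices of $F=G\boxtimes H$. The aim is to show that, for $a,b\in V(F)$ in the same component, a walk of length $k$ from $a$ to $b$ exists if and only if
\[
k\ge d_F(a,b), \quad\text{excluding the single case } (k,d_F(a,b))=(1,0).
\]
If $a,b$ lie in different components, no walk of any length exists. Translated into matrices, this gives $\operatorname{supp}(A_1(F)^0)=I$, $\operatorname{supp}(A_1(F))=A_1(F)$, and for $k\ge 2$
\[
\operatorname{supp}(A_1(F)^k)=\sum_{i=0}^{\min(k,d)} A_i(F).
\]
Each of these lies in $\operatorname{span}\mathcal{A}$, with distances taken inside components when $F$ is disconnected. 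Hence existence of a walk depends only on the distance, which is the DWR condition.

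\textbf{Step 1: every edge of $F$ lies in a triangle.} This is where the hypothesis of no isolated vertices enters. There are three edge types to check.
\begin{itemize}
\item For an edge $(u,x)(u',x)$ with $u\sim u'$, pick a neighbour $x'$ of $x$ in $H$; it exists because $H$ has no isolated vertices. Then $(u',x')$ is adjacent to both endpoints.
\item For an edge $(u,x)(u,x')$, the argument is symmetric: pick a neighbour $u'$ of $u$ in $G$.
\item For a diagonal edge $(u,x)(u',x')$, the vertex $(u',x)$ is a common neighbour.
\end{itemize}
Since every vertex of $F$ has at least one neighbour, every vertex lies on a triangle. Consequently every vertex has closed walks of length $2$ (go back and forth along an edge) and of length $3$ (around a triangle). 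Combining these, every vertex has closed walks of every length $\ge 2$.

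\textbf{Step 2: existence of walks.} Take $a,b$ with $d=d_F(a,b)$ and a target length $k$.
\begin{itemize}
\item If $d=0$, the case $k=0$ is trivial, $k\ge2$ follows from Step 1, and $k=1$ is impossible since $F$ is simple.
\item If $d\ge1$ and $k=d$, use a shortest path. If $k=d+1$, take a shortest path and replace its first edge $ac$ by the length-$2$ walk $a\to w\to c$ through a common neighbour $w$, which exists by Step 1. If $k\ge d+2$, append a closed walk of length $k-d\ge2$ at $b$.
\end{itemize}
Non-existence is immediate in the remaining cases. No walk is shorter than the distance, no walk of length $1$ is closed, and no walk crosses components.

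\textbf{Main obstacle.} The only real difficulty is the case $k=d+1$. Here neither parity nor padding by closed walks helps, and the triangle property of each edge from Step 1 is exactly what is needed. I expect no other obstacle, apart from carefully interpreting the DWR definition componentwise when $F$ is disconnected.
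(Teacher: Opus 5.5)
Your proof is correct and follows essentially the same route as the paper. Both arguments rest on the fact that every edge of $G \boxtimes H$ lies in a triangle: the paper replaces the last edge of a shortest path by a length-$2$ detour, you replace the first. Both then pad the walk with back-and-forth steps. Your version is slightly more careful than the paper's, because you explicitly treat the distance-$0$ case, where no closed walk of length $1$ exists and there is no final edge to oscillate on. You also give a componentwise reading for the case where $G \boxtimes H$ is disconnected, which the paper glosses over.
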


\begin{proof}
    Let $(u,x), (v,y) \in V(G \boxtimes H)$ such that $d_{G \boxtimes H}((u,x), (v,y)) = i$. We must show that for any $j \geq i$, there exists a walk of length $j$ between these vertices. As in the bipartite case, the existence of a walk of length $i+2k$ for $k \geq 0$ is trivially guaranteed by traversing a shortest path $P$ and oscillating along its final edge. To cover walks of odd length relative to $i$ (i.e., $j = i+2k+1$), it suffices to construct a walk of length $i+1$.

    Let $P = ((u,x) = w_0, w_1, \dots, w_{i-1}, w_i = (v,y))$ be a shortest path of length $i$. The penultimate vertex $w_{i-1} = (w,z)$ is adjacent to $(v,y)$ in the strong product. By definition, they differ in at least one coordinate. We construct a length-2 sub-walk between them as follows:
    If $w \sim v$ and $z \sim y$, we can simply insert the mixed coordinate vertex to form the sub-walk $(w,z), (w,y), (v,y)$. If they differ in only one coordinate, say $w=v$ and $z \sim y$, the absence of isolated vertices in $G$ guarantees a neighbor $v_0 \sim v$, allowing the sub-walk $(w,z), (v_0, y), (v,y)$. The remaining case ($w \sim v$ and $z=y$) is symmetric. 
    
    Replacing the final edge of $P$ with the corresponding length-2 sub-walk yields a valid walk of length $i+1$. By oscillating the final edge of this new walk, we guarantee the existence of walks of length $i+2k+1$. Thus, the existence of a walk depends solely on $j \geq i$, proving that $G \boxtimes H$ is DWR.
\end{proof}

\begin{theorem}\label{teo: condicion no DWR}
    Let $G$ be a graph with odd girth $g(G) = 2s+1$. Then the Cartesian product $G \square K_2$ is not DWR.
\end{theorem}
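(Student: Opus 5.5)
To prove that $G \square K_2$ is not DWR, I will exhibit two pairs of vertices at the same distance in $G \square K_2$ such that a walk of some fixed length joins one pair but not the other. I identify $V(K_2)=\{0,1\}$ and use the Cartesian distance formula $d((u,a),(v,b)) = d_G(u,v) + d_{K_2}(a,b)$, which underlies Lemma~\ref{lem:formula_cartesiana}.

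The first step is to understand walks. A walk in $G \square K_2$ of length $m$ from $(u,a)$ to $(v,b)$ consists of $m_1$ steps in $G$ and $m_2$ steps in $K_2$, with $m_1+m_2=m$. The $G$-steps project to a walk of length $m_1$ from $u$ to $v$. The number $m_2$ has parity $d_{K_2}(a,b)$. Conversely, any such pair of walks can be interleaved freely. So a walk of length $m$ exists if and only if there are a $u$--$v$ walk of length $m_1$ in $G$ and some $m_2 \ge 0$ with $m_2 \equiv d_{K_2}(a,b) \pmod 2$ and $m_1+m_2=m$.

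Next I choose the two pairs. Take a shortest odd cycle $C$ of length $2s+1$ in $G$. Because $C$ is a shortest odd cycle it is isometric in $G$, so I can pick $u,v$ on $C$ with $d_G(u,v)=s$. Compare the pair $P=((u,0),(v,1))$, at distance $s+1$, with the pair $Q=((u',0),(v',0))$, where $d_G(u',v')=s+1$. Such vertices exist when $\operatorname{diam}(G)\ge s+1$; if not, I will instead take a neighbour $w$ of $v$ on the cycle side, so that $d_G(u,w)$ is $s$ or $s+1$. Cleaner still, compare $P$ with $Q'=((u,1),(v,0))$, but that is symmetric. The more robust choice is the test length $m=s+2$. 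For $P$, a walk of length $s+2$ needs $m_2$ odd, hence $m_1 \in \{s+1,\, s-1,\dots\}$ with $m_1 \equiv s+1 \pmod 2$. A $u$--$v$ walk of length $m_1$ with $m_1\equiv s+1$ and $m_1 < 2s+1-s = s+1$ cannot exist, since it would close with the shortest path into an odd closed walk of length less than $2s+1$. Also $m_1=s+1$ is impossible by the same parity argument, because $s+1+s=2s+1$ would be allowed only if the walk equals the other arc of $C$, which has length $s+1$. That case does exist, and this is exactly the delicate point.

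So I will recompute carefully with the other arc. Take $u,v$ with $d_G(u,v)=s$ on $C$, where the complementary arc has length $s+1$. Then the $G$-walks $u\to v$ exist exactly for lengths $s+2k$ and $s+1+2k$, for $k\ge 0$. In $G\square K_2$, the pair $((u,0),(v,0))$ at distance $s$ has walks of all lengths $\ge s$ except that length $s+1$ needs $m_2$ even and $m_1=s+1$, which does exist. I therefore expect the obstruction to come from comparing pairs with a different mix of $G$-distance and $K_2$-distance but the same total distance. Pair $A=((u,0),(u,1))$ at distance $1$ has walks exactly of odd length up to $2s$, while pair $B=((u,0),(w,0))$ with $w\sim u$ also has distance $1$. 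For $B$, the $G$-walk of length $2s$ along the cycle, together with $m_2=0$, gives a walk of length $2s$. For $A$, length $2s$ needs $m_2$ odd and $m_1$ odd with $m_1\le 2s-1$, i.e.\ a closed walk at $u$ of odd length less than $2s+1$, which contradicts the odd girth. Hence $A$ and $B$ lie at the same distance but differ in walk existence at length $2s$, so $G\square K_2$ is not DWR.

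The main obstacle is the parity bookkeeping in the walk decomposition, specifically checking that odd closed walks in $G$ have length at least $2s+1$. This follows from the standard fact that every odd closed walk contains an odd cycle.
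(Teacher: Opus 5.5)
Your final argument is correct and is essentially the paper's proof. You compare $((u,0),(u,1))$ with $((u,0),(w,0))$, where $u,w$ are consecutive on a shortest odd cycle, and show that a walk of length $2s$ exists for the second pair but not the first, since its $G$-steps would form an odd closed walk at $u$ of length at most $2s-1$; the paper instead closes the walk with the cross edge and projects, which is the same parity count. The exploratory detour with $d_G(u,v)=s$ and the pairs $P,Q$ is muddled and unnecessary. Drop it, and state explicitly that $w$ is the neighbour of $u$ along the cycle $C$.
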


\begin{proof}
    Let $V(K_2)=\{0,1\}$. Select two adjacent vertices $u,v \in V(G)$ that belong to a minimum odd cycle of length $2s+1$ in $G$. In the Cartesian product, $(u,0) \sim (v,0)$ and $(u,0) \sim (u,1)$, meaning $d((u,0),(v,0)) = 1 = d((u,0),(u,1))$. 
    
    Following the remaining vertices of the minimum cycle entirely within the copy $G \square \{0\}$, there exists a walk of length $2s$ between $(u,0)$ and $(v,0)$. Suppose, for the sake of contradiction, that $G \square K_2$ is DWR. Since the distances are identical, there must also exist a walk $W$ of length $2s$ between $(u,0)$ and $(u,1)$. 
    
    Adding the edge $(u,1)(u,0)$ to $W$, we obtain a closed walk $S$ of odd length $2s+1$ that starts and ends at $(u,0)$. Because $S$ visits $(u,1)$ and must return to $(u,0)$, it must traverse the cross edges between the copies $G \square \{0\}$ and $G \square \{1\}$ a strictly positive, even number of times (say, $2k$ times where $2k \geq 2$). 
    
    Let $\pi_G : G \square K_2 \to G$ be the natural projection map defined by $\pi_G(x,i) = x$. The projection $\pi_G(S)$ drops the $2k$ cross edges, yielding an odd closed walk entirely within $G$ of length exactly $(2s+1) - 2k$. Since $2k \geq 2$, this walk has an odd length of at most $2s-1$. It is a standard result that any odd closed walk must contain an odd cycle, which implies $G$ contains an odd cycle of length $m \leq 2s-1$. However, this directly contradicts the hypothesis that the odd girth of $G$ is $2s+1$. Therefore, $G \square K_2$ is not DWR.
\end{proof}

To illustrate the independence of these regularity classes, we provide a structured catalog of infinite families that selectively satisfy these properties:

First, we consider graphs that are \textbf{DP, DMR, and DWR, but not DR}. By Theorems \ref{teo: condicion suficiente para DP} and \ref{teo: condicion dwr}, the strong product $C_4 \boxtimes K_n$ guarantees the DP and DWR properties for every $n \geq 2$. However, evaluating the vertex intersections for $V(C_4)=\{c_1,c_2,c_3,c_4\}$ and $V(K_n)=\{0,1,\dots,n-1\}$ yields $|\Gamma_1(c_1,0) \cap \Gamma_1(c_1,1)| = 3n-2$ and $|\Gamma_1(c_1,0) \cap \Gamma_1(c_2,0)| = 2n-2$. Since these intersection numbers differ for pairs at distance 1, the graph explicitly fails to be DR.

Second, we isolate graphs that are \textbf{DP and DMR, but not DWR}. The Cartesian product $K_n \square K_2$ (for $n \geq 3$) retains the DP and DMR properties from its factors. However, since $K_n$ contains an odd cycle of length 3 (for $n \geq 3$), Theorem \ref{teo: condicion no DWR} implies that this Cartesian product is strictly not DWR.

Third, we construct graphs that are \textbf{DP, but neither DMR nor DWR}. The graph $C$ (Figure \ref{fig: CDDR y no DMR}) inherently fails to be DWR due to a structural asymmetry: there exist adjacent pairs with a common neighbor (forming a length-2 walk) and adjacent pairs without one. This asymmetry projects cleanly into the Cartesian product $C \square K_n$. As previously established, for $n \geq 8$, $C \square K_n$ is a DP graph but fails to be DMR, while simultaneously remaining non-DWR.

Fourth, we generate graphs that are \textbf{DP and DWR, but not DMR}. Taking the non-DMR graph from the previous case, we observe that $-1 \notin \operatorname{Spec}(C \square K_8)$. Applying Corollary \ref{coro: Kn DP} and Theorem \ref{teo: condicion dwr}, the strong product $(C \square K_8) \boxtimes K_n$ preserves the DP property and strictly forces the DWR condition for every $n \geq 2$. However, it fails to inherit the DMR property.

\begin{Corollary}
    The classes of DWR, DP, CDDR, DMR, and DDR graphs are distinct and exhibit non-empty intersections. Specifically, there exist infinite families of graphs in each possible intersection and symmetric difference of these classes.
\end{Corollary}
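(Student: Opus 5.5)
The plan is to assemble the Corollary from the four-item catalog just given, together with the earlier constructions and the closure and inheritance results. The first step is to fix the meaning of ``each possible intersection and symmetric difference.'' We take the classes as subsets of DDR, except DWR, which is not contained in DDR. The relevant regions of the Venn diagram are then those consistent with the known inclusions $\{\text{DR}\}\subsetneq\{\text{DP}\}\subsetneq\{\text{CDDR}\}\subsetneq\{\text{DDR}\}$ and $\{\text{DMR}\}\subseteq\{\text{DDR}\}$. We enumerate these regions by the truth values of (DP, CDDR, DMR, DWR) inside DDR, and separately record the region ``DWR but not DDR.''

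Most regions can be filled by a generic template. Take a seed graph $X$ in the region. Then take $X\boxtimes K_n$, or $X\boxtimes G$ with $G$ DR, which by Theorem~\ref{teo: condicion dwr} forces DWR. Alternatively, take $X\square K_m$; by Theorem~\ref{teo: condicion no DWR}, when $X\square K_m$ has $K_2$-like structure and an odd cycle, DWR is destroyed. Membership in DDR, CDDR and DMR transfers along these products by Theorem~\ref{teo: clausura fuerte} and its Cartesian analogue. Non-membership transfers by Theorems~\ref{teo: reciproco ddr}, \ref{teo: reciproco cddr}, the DMR converse, and Theorem~\ref{teo: reciprocos cartesiano}. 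DP is kept by Proposition~\ref{teo: condicion suficiente para DP} or Corollary~\ref{coro: Kn DP} when $-1\notin\operatorname{Spec}$ or $m$ is large. DP is killed by Theorem~\ref{teo: si alguno no es dp el producto no es dp}, or by adjoining $-1$ through $K_n$ as in the $C_6\boxtimes K_n$ example. Taking $n$ or $m$ large gives arbitrarily many vertices; iterating the product with a long cycle increases the diameter.

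The seeds are assigned region by region. The catalog covers DP with DMR and DWR, DP with DMR but not DWR, DP with neither, and DP with DWR but not DMR. The examples built on $M$ cover CDDR and DMR but not DP. Strong products with $M$ place these in DWR, while $M\square K_2$ should, via Theorem~\ref{teo: condicion no DWR}, remove DWR provided $M$ has an odd cycle; otherwise we take $(M\boxtimes K_3)\square K_2$. Cases without DMR use $C$ combined with a non-DP factor. The graph $X$ of Fig.~\ref{fig: DMR y no CDDR} gives DMR but not CDDR, and $X\boxtimes K_n$ is DWR. The region DDR but not CDDR and not DMR uses the DDR-not-CDDR example of \cite{conde2026} in the same way, and paths give DWR but not DDR.

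The main obstacle is the non-DWR side. Theorem~\ref{teo: condicion no DWR} applies only to $G\square K_2$, and the products above must still retain the other properties. We would therefore apply it with $G=X\square K_{m'}$ (or $G=X\boxtimes K_3$), whose odd girth is $3$, and then use $K_2$. Since $K_2$ is DR and $(X\square K_{m'})\square K_2=X\square(K_{m'}\square K_2)$, the property bookkeeping reduces to the Cartesian closure and inheritance results. DP must be rechecked through the injectivity of $\lambda+\mu$ using the spectrum of $K_{m'}\square K_2$, which holds for $m'$ large.
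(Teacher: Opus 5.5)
Your plan is essentially the paper's proof. The four DP regions come from the catalog. Every other region is obtained from a seed $G$ in one of two ways: via $G\boxtimes K_n$, where DWR is forced by Theorem~\ref{teo: condicion dwr}, or via a Cartesian product with $K_2$ over an odd-girth-$3$ graph such as $(G\boxtimes K_n)\square K_2$, where DWR is destroyed by Theorem~\ref{teo: condicion no DWR}. Memberships are carried by the closure results and non-memberships by the converse theorems. You additionally name the seeds explicitly ($M$, $C$ combined with $M$, $X$, the DDR-non-CDDR example, paths), which the paper leaves implicit.

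One caution: your closing claim that injectivity of $\lambda+\mu$ against $\operatorname{Spec}(K_{m'}\square K_2)=\{m',m'-2,0,-2\}$ holds for large $m'$ is false in general. The gap $0-(-2)=2$ does not depend on $m'$, so injectivity fails, for example, for $X=C_4$. The step is never needed, though: the DP regions are supplied by the catalog, and for non-DP seeds only non-membership in DP has to be inherited.
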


\begin{proof}
    From the previously established results, we have already shown that the class of DP graphs contains infinite families of graphs that are simultaneously DWR and DMR, DWR but not DMR, DMR but not DWR, and neither DWR nor DMR.
    
    To demonstrate the existence of infinite families for any other arbitrary intersection or symmetric difference of these classes, let $G$ be a graph belonging to the desired region. By Theorem \ref{teo: condicion dwr} and our strong product preservation theorems, the graph $G \boxtimes K_n$ inherits the exact same class memberships for any $n \geq 2$, with the added guarantee that it is a DWR graph. Since $n \geq 2$ is arbitrary, this construction immediately generates an infinite family of such graphs.
    
    Conversely, note that the strong product $G \boxtimes K_n$ has an odd girth of $3$. By Theorem \ref{teo: condicion no DWR} and our Cartesian product preservation theorems, the graph $(G \boxtimes K_n) \square K_2$ preserves the same class memberships regarding the DP, CDDR, DMR, and DDR properties, but it is explicitly not a DWR graph. Varying $n \geq 2$ again provides an infinite family for these strictly non-DWR conditions. This completes the proof.
\end{proof}

\subsection{Direct Product and DWR Graphs}

To formalize the distance structure of $G$, we introduce the following recursive characterization of its distance matrices. Let $\operatorname{supp}(M)$ denote the support of a matrix $M$.

\begin{lemma}\label{lema: expresion matriz de distancia}
Let $G$ be a graph. For each $i \in \{1, \dots, \operatorname{diam}(G)\}$, the $i$-th distance matrix $A_i(G)$ can be expressed as:
\begin{equation*}
    A_i(G) = \operatorname{supp}(A_1(G)^i) \circ \left(J - \sum_{k=0}^{i-1} A_k(G)\right).
\end{equation*} 
\end{lemma}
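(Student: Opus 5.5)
The plan is to check the identity entrywise. Both sides are $\{0,1\}$-matrices indexed by $V(G)\times V(G)$, so it suffices to fix $u,v\in V(G)$ and show that the $(u,v)$ entry of the right-hand side equals $1$ exactly when $d(u,v)=i$. First I will interpret each factor of the Hadamard product combinatorially.

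\begin{enumerate}
\item The matrix $A_1(G)^i$ counts walks of length $i$. Hence $\operatorname{supp}(A_1(G)^i)_{uv}=1$ if and only if there is a walk of length exactly $i$ from $u$ to $v$.
\item The matrices $A_0(G),\dots,A_{\operatorname{diam}(G)}(G)$ have pairwise disjoint supports, and their sum is $J$. So $\sum_{k=0}^{i-1}A_k(G)$ is the indicator matrix of the relation $d(u,v)\le i-1$. Consequently $J-\sum_{k=0}^{i-1}A_k(G)$ is the indicator matrix of $d(u,v)\ge i$.
\item The Hadamard product of two $\{0,1\}$-matrices is the indicator of the intersection of their supports. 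So the right-hand side has a $1$ at $(u,v)$ exactly when there is a walk of length $i$ from $u$ to $v$ and $d(u,v)\ge i$.
\end{enumerate}

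It remains to show that this conjunction is equivalent to $d(u,v)=i$.

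Suppose a walk of length $i$ joins $u$ and $v$. Every walk contains a path between its endpoints, so $d(u,v)\le i$. Together with $d(u,v)\ge i$ this forces $d(u,v)=i$.

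Conversely, suppose $d(u,v)=i$. A shortest $u$–$v$ path is itself a walk of length $i$, so the support entry is $1$, and $d(u,v)\ge i$ holds trivially.

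Since $G$ is connected, every pair of vertices has a finite distance, and the partition $\sum_k A_k(G)=J$ covers every pair. This justifies step~2. The restriction $i\le\operatorname{diam}(G)$ is needed only so that $A_i(G)$ is one of the listed matrices; the identity would also hold beyond the diameter, with both sides equal to zero.

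No step is a genuine obstacle. The only point that needs care is the first direction of the equivalence: the inequality $d(u,v)\le i$ uses the fact that a walk of length $i$ yields a path of length at most $i$, which follows by deleting closed subwalks.
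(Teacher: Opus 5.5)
Your proposal is correct and follows essentially the same route as the paper: both verify the identity entrywise. The paper splits into the three cases $d(u,v)=i$, $d(u,v)<i$, $d(u,v)>i$, while you phrase the same reasoning as the equivalence between ``a walk of length $i$ exists and $d(u,v)\ge i$'' and $d(u,v)=i$; your explicit remark that a walk of length $i$ yields a path, hence $d(u,v)\le i$, spells out the step the paper asserts without justification in its case $d(u,v)>i$.
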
 

\begin{proof}
    Let $u,v \in V(G)$. We evaluate the $uv$-entry of the right-hand side based on the actual distance $d(u,v)$. 
    
    If $d(u,v) = i$, there is at least one walk of length $i$ between $u$ and $v$, meaning $\operatorname{supp}(A_1(G)^i)_{uv} = 1$. Moreover, since $d(u,v) \not< i$, the pair has not been indexed in any lower-distance matrix, so $\sum_{k=0}^{i-1} (A_k(G))_{uv} = 0$. The Hadamard product yields $1 \cdot (1 - 0) = 1$.
    
    If $d(u,v) < i$, forces $\sum_{k=0}^{i-1} (A_k(G))_{uv} = 1$. The second factor becomes $(1 - 1) = 0$, vanishing the product.
    
    If $d(u,v) > i$, no walk of length $i$ exists between the vertices, forcing the first factor $\operatorname{supp}(A_1(G)^i)_{uv} = 0$, which again vanishes the product. 
    
    In all cases, the equation correctly evaluates to exactly $1$ when $d(u,v)=i$ and $0$ otherwise, matching the definition of $A_i(G)$.
\end{proof}

\begin{Remark}\label{obs: forma de Ai}
    It is straightforward to verify that $\operatorname{supp}(A \otimes B) = \operatorname{supp}(A) \otimes \operatorname{supp}(B)$ for any non-negative real matrices $A$ and $B$. Since $A_1(G \times H) = A_1(G) \otimes A_1(H)$, applying standard Kronecker product properties to Lemma \ref{lema: expresion matriz de distancia} yields:
    \begin{equation*}
        A_i(G \times H) = \left( \operatorname{supp}(A_1(G)^i) \otimes \operatorname{supp}(A_1(H)^i) \right) \circ \left( J_G \otimes J_H - \sum_{k=0}^{i-1} A_k(G \times H) \right).
    \end{equation*}
\end{Remark}

\begin{lemma}\label{lema: Ai span tensor}
    Let $G$ and $H$ be DWR graphs such that at least one is non-bipartite. Then, $A_i(G \times H) \in \operatorname{span}(\mathcal{A}_G \otimes \mathcal{A}_H)$ for every $i \in \{1, \dots, \operatorname{diam}(G \times H)\}$.
\end{lemma}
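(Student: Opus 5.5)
We argue by strong induction on $i$, using the recursive formula of Remark~\ref{obs: forma de Ai}. The only facts we need about $\operatorname{span}(\mathcal{A}_G\otimes\mathcal{A}_H)$ are the following.
\begin{enumerate}
\item[(i)] It is closed under the Hadamard product. This follows from $(A\otimes B)\circ(C\otimes D)=(A\circ C)\otimes(B\circ D)$ together with Lemma~\ref{lema: AoB esta en el span}, applied in each factor.
\item[(ii)] It contains $J_G\otimes J_H$, because $J=\sum_k A_k$ in each factor.
\item[(iii)] It contains $\operatorname{supp}(A_1(G)^i)\otimes\operatorname{supp}(A_1(H)^i)$ for every $i$. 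Indeed, $G$ and $H$ are DWR, so $\operatorname{supp}(A_1(G)^i)\in\operatorname{span}\mathcal{A}_G$ and $\operatorname{supp}(A_1(H)^i)\in\operatorname{span}\mathcal{A}_H$.
\end{enumerate}

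The base case is $A_0(G\times H)=I_G\otimes I_H$. Now assume that $A_0(G\times H),\dots,A_{i-1}(G\times H)$ all lie in the span. Then the second factor $J_G\otimes J_H-\sum_{k<i}A_k(G\times H)$ lies in the span by (ii). The first factor lies in the span by (iii). By (i), their Hadamard product $A_i(G\times H)$ also lies in the span. This closes the induction.

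The real issue is that Remark~\ref{obs: forma de Ai} relies on Lemma~\ref{lema: expresion matriz de distancia} for the graph $G\times H$. That lemma was stated for connected graphs, and $\operatorname{diam}(G\times H)$ is finite only if $G\times H$ is connected. This is exactly where the hypothesis that one factor is non-bipartite is used, via Weichsel's theorem: the direct product of connected graphs is connected if and only if at least one factor is non-bipartite. We also note that the case $d(u,v)>i$ in Lemma~\ref{lema: expresion matriz de distancia} only needs the absence of walks of length $i$, and this holds in any graph. Moreover, the identity $\operatorname{supp}(A_1(G\times H)^i)=\operatorname{supp}(A_1(G)^i)\otimes\operatorname{supp}(A_1(H)^i)$ follows from $(A\otimes B)^i=A^i\otimes B^i$ and nonnegativity. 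Combinatorially, a walk of length $i$ in $G\times H$ is the same thing as a pair of walks of length exactly $i$, one in each factor.

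I expect the main obstacle to be justifying connectivity, that is, the finiteness of $\operatorname{diam}(G\times H)$. Here I would cite Weichsel's theorem, or sketch its argument. Given any two vertices $(u,x),(v,y)$, we need walks of a common length $m$ from $u$ to $v$ in $G$ and from $x$ to $y$ in $H$. If $G$ is non-bipartite, we can adjust the parity of the $G$-walk by going around an odd cycle. Both walk lengths can then be increased by $2$ by oscillating on an edge, so they can be made equal. Once connectivity is settled, the induction above is purely formal.
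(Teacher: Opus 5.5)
Your proposal is correct and follows the paper's proof essentially verbatim. Both argue by strong induction on $i$ via the recursive Hadamard formula of Remark~\ref{obs: forma de Ai}. The DWR hypothesis places $\operatorname{supp}(A_1(G)^i)\otimes\operatorname{supp}(A_1(H)^i)$ in the span, and the mixed-product rule together with Lemma~\ref{lema: AoB esta en el span} gives closure under the Hadamard product.

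Your connectivity discussion is a bonus rather than a necessity, and "main obstacle" overstates it. The paper never invokes non-bipartiteness in this proof. As you note yourself, the recursive formula holds in any graph, so Weichsel's theorem only makes $\operatorname{diam}(G\times H)$ finite and keeps $G\times H$ within the paper's standing connectedness assumption; the induction does not depend on it.
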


\begin{proof}
    We proceed by strong induction on $i$. The base cases $i \in \{0, 1\}$ trivially belong to the span. Assume the result holds for all $j < i$. By Remark \ref{obs: forma de Ai}, $A_i(G \times H)$ is obtained via matrix addition and the Hadamard product of $\operatorname{supp}(A_1(G)^i) \otimes \operatorname{supp}(A_1(H)^i)$ with lower-index distance matrices. Since $G$ and $H$ are DWR, their respective support matrices lie in $\operatorname{span}(\mathcal{A}_G)$ and $\operatorname{span}(\mathcal{A}_H)$. Invoking the inductive hypothesis, the mixed-product property for Hadamard and Kronecker products, and Lemma \ref{lema: AoB esta en el span}, every term in the expansion is algebraically closed under the tensor span. Thus, $A_i(G \times H) \in \operatorname{span}(\mathcal{A}_G \otimes \mathcal{A}_H)$.
\end{proof}

\begin{theorem}
    Let $G$ and $H$ be graphs belonging to one of the regularity classes \emph{DDR, CDDR, or DMR}. If both $G$ and $H$ are DWR and at least one is non-bipartite, then the direct product $G \times H$ belongs to the same respective class.
\end{theorem}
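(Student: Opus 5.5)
The plan is to mirror the proof of Theorem \ref{teo: clausura fuerte}. The key input is Lemma \ref{lema: Ai span tensor}: since $G$ and $H$ are DWR and at least one of them is non-bipartite, every distance matrix $A_i(G\times H)$ lies in $\operatorname{span}(\mathcal{A}_G\otimes\mathcal{A}_H)$. Non-bipartiteness also makes $G\times H$ connected, so its distance matrices are well defined. I would first make the span membership explicit. Iterating Lemma \ref{lema: expresion matriz de distancia} with the DWR expansions $\operatorname{supp}(A_1(G)^i)=\sum_a \epsilon_a A_a(G)$ and $\operatorname{supp}(A_1(H)^i)=\sum_b \eta_b A_b(H)$, and using $A_a\circ A_{a'}=\delta_{aa'}A_a$ (as in Lemma \ref{lema: AoB esta en el span}), one gets
\begin{equation*}
A_i(G\times H)=\sum_{(a,b)\in S_i} A_a(G)\otimes A_b(H)
\end{equation*}
for some index set $S_i\subseteq\{0,\dots,d_G\}\times\{0,\dots,d_H\}$. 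The sets $S_0,S_1,\dots$ partition the full index set, because the matrices $A_a(G)\otimes A_b(H)$ are $0/1$ matrices with disjoint supports summing to $J$. In words, the distance in $G\times H$ between $(u,x)$ and $(v,y)$ depends only on the pair $(d_G(u,v),d_H(x,y))$.

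\textbf{DDR.} Each $A_a(G)$ has constant row sum $k_a(G)$, and likewise for $H$. Then $\mathbf{1}_{|V(G)|}\otimes\mathbf{1}_{|V(H)|}$ is an eigenvector of every $A_a(G)\otimes A_b(H)$, with eigenvalue $k_a(G)k_b(H)$. Hence it is an eigenvector of each $A_i(G\times H)$, and $G\times H$ is DDR.

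\textbf{CDDR.} $\operatorname{span}(\mathcal{A}_G)$ and $\operatorname{span}(\mathcal{A}_H)$ are commutative algebras. The Kronecker products $A\otimes B$ with $A\in\operatorname{span}(\mathcal{A}_G)$, $B\in\operatorname{span}(\mathcal{A}_H)$ commute pairwise by the mixed-product property. Therefore all $A_i(G\times H)$ commute, and the characterization theorem of Conde et al.\ (CDDR if and only if the distance matrices commute) gives the claim.

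\textbf{DMR.} I would use Lemma \ref{lem: caracterización de DMR con matrices}. Substituting the expansion above into $A_iA_j\circ A_h$ for $G\times H$ and applying both mixed-product rules turns it into a sum of terms
\begin{equation*}
(A_a(G)A_{a'}(G)\circ A_{a''}(G))\otimes(A_b(H)A_{b'}(H)\circ A_{b''}(H)).
\end{equation*}
Since $G$ and $H$ are DMR, each factor has $\mathbf{1}$ as an eigenvector. So each term maps $\mathbf{1}\otimes\mathbf{1}$ to a multiple of itself, and so does the whole sum.

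\textbf{Main obstacle.} The hard part is the rigor of Lemma \ref{lema: Ai span tensor} itself, specifically the Hadamard step with the ``$J-\sum A_k$'' factor. Here $J_G\otimes J_H=\sum_{a,b}A_a(G)\otimes A_b(H)$ lies in the span, and Hadamard products of the basis elements $A_a(G)\otimes A_b(H)$ behave like those of a partition. I would state this explicitly, reducing the Hadamard closure of the tensor span to Lemma \ref{lema: AoB esta en el span} applied coordinatewise.
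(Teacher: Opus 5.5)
Your proposal is correct and follows the same route as the paper. You invoke Lemma~\ref{lema: Ai span tensor} and then run the strong-product arguments: the common eigenvector $\mathbf{1}\otimes\mathbf{1}$ for DDR, commutation via the mixed-product rule for CDDR, and the Hadamard/Kronecker expansion of $A_iA_j\circ A_h$ for DMR. You work these out in more detail than the paper's one-line appeal, including the useful explicit description of $A_i(G\times H)$ as a $0/1$ sum of the disjoint blocks $A_a(G)\otimes A_b(H)$.

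One small correction: for a CDDR graph that is not DR, $\operatorname{span}(\mathcal{A}_G)$ is a commuting subspace but not an algebra, since it is not closed under multiplication. Your CDDR step only uses pairwise commutativity of the products $A\otimes B$, so the argument is unaffected.
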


\begin{proof}
    The result follows immediately from Lemma \ref{lema: Ai span tensor}. Since every distance matrix $A_i(G \times H)$ resides strictly within $\operatorname{span}(\mathcal{A}_G \otimes \mathcal{A}_H)$, the structural constants (such as row sums, entrywise products, and diagonal traces) of the product graph are entirely determined by the corresponding vertex-independent parameters of the factor matrices. Consequently, $G \times H$ directly inherits the regularity properties of its factors.
\end{proof}

\begin{Corollary}
    Let $G$ and $H$ be CDDR and DWR graphs, where at least one is non-bipartite. Let $S_G$ and $S_H$ be the matrices that simultaneously diagonalize the distance matrices of $G$ and $H$, respectively. Then, $S_G \otimes S_H$ simultaneously diagonalizes $A_k(G \times H)$ for all $1 \leq k \leq \operatorname{diam}(G \times H)$.
\end{Corollary}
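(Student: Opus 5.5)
The plan is to mirror the proof of Corollary \ref{coro: autovalores de producto fuerte}, with Lemma \ref{lema: Ai span tensor} playing the role that Lemma \ref{lema: formula producto fuerte} played for the strong product. Since $G$ and $H$ are DWR and at least one of them is non-bipartite, Lemma \ref{lema: Ai span tensor} applies. So for each $1 \le k \le \operatorname{diam}(G \times H)$ we can write
\begin{equation*}
A_k(G\times H)=\sum_{r=0}^{\operatorname{diam}(G)}\sum_{s=0}^{\operatorname{diam}(H)} c^{(k)}_{rs}\, A_r(G)\otimes A_s(H)
\end{equation*}
for suitable scalars $c^{(k)}_{rs}$. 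This step uses that $\operatorname{span}(\mathcal{A}_G\otimes\mathcal{A}_H)$ is spanned by the elementary tensors $A_r(G)\otimes A_s(H)$, which is immediate from bilinearity of $\otimes$.

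Next we use the CDDR hypothesis. By Theorem \ref{teo: caracterización de CDDR 2}, $S_G^{-1}A_r(G)S_G=D_r(G)$ and $S_H^{-1}A_s(H)S_H=D_s(H)$ are diagonal for every $r$ and $s$; this includes $r=0$ and $s=0$, where $A_0=I$. The matrix $S_G\otimes S_H$ is invertible with inverse $S_G^{-1}\otimes S_H^{-1}$. By the mixed-product property,
\begin{equation*}
(S_G\otimes S_H)^{-1}\bigl(A_r(G)\otimes A_s(H)\bigr)(S_G\otimes S_H)=D_r(G)\otimes D_s(H),
\end{equation*}
and a Kronecker product of diagonal matrices is diagonal. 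By linearity, $(S_G\otimes S_H)^{-1}A_k(G\times H)(S_G\otimes S_H)=\sum_{r,s}c^{(k)}_{rs}D_r(G)\otimes D_s(H)$ is diagonal for every $k$. As a by-product, the eigenvalue of $A_k(G\times H)$ on $\mathbf{x}_j\otimes\mathbf{y}_m$ is $\sum_{r,s}c^{(k)}_{rs}\lambda_j^{(r)}\mu_m^{(s)}$.

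There is no serious obstacle once Lemma \ref{lema: Ai span tensor} is granted; the weight of the argument rests there. The one point to state carefully is that membership in the span gives a \emph{finite linear} combination of elementary tensors $A_r(G)\otimes A_s(H)$. Being built from products and Hadamard products is not by itself enough for the diagonalization step. Linearity is what lets the common eigenbasis pass through. I would also note that Hadamard products, which appear in the recursion of Remark \ref{obs: forma de Ai}, do \emph{not} in general preserve eigenvectors. They are harmless here only because Lemma \ref{lema: AoB esta en el span} has already collapsed them back into the linear span of the distance matrices.
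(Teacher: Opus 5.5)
Your proposal is correct and is essentially the paper's argument. Lemma \ref{lema: Ai span tensor} places each $A_k(G\times H)$ in $\operatorname{span}(\mathcal{A}_G\otimes\mathcal{A}_H)$, and $S_G\otimes S_H$ diagonalizes every elementary tensor $A_r(G)\otimes A_s(H)$ by the mixed-product property, hence every element of that span. The paper also first cites the preceding theorem to note that $G\times H$ is CDDR; your version correctly omits this, since that fact is not needed and in fact follows from the simultaneous diagonalization itself.
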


\begin{proof}
    By the preceding theorem, $G \times H$ is CDDR, ensuring its distance matrices commute and are simultaneously diagonalizable. Furthermore, by Lemma \ref{lema: Ai span tensor}, every distance matrix $A_k(G \times H)$ belongs to $\operatorname{span}(\mathcal{A}_G \otimes \mathcal{A}_H)$. Since $S_G$ and $S_H$ diagonalize the basis matrices of $\mathcal{A}_G$ and $\mathcal{A}_H$, standard properties of the Kronecker product dictate that $S_G \otimes S_H$ diagonalizes any matrix within this tensor span, completing the proof.
\end{proof}

    In the study of DWR graphs, we have shown that they constitute a remarkably broad family with significant structural variability. A natural question arises: is it worth defining and isolating a class of graphs where the number of walks of any length $k$ between two vertices $u$ and $v$ depends solely and exclusively on the distance between $u$ and $v$?
    
    In matrix terms, this walk-regularity property is equivalent to requiring that every power of the adjacency matrix, $A_1^k$, belongs to the vector space spanned by the distance matrices of $G$. That is, $A_1^k \in \operatorname{span}\{A_0, A_1, \dots, A_d\}$. This algebraic imposition, far from being restrictive, yields the following fundamental characterization.
    
    \begin{theorem}
    Let $G$ be a graph. Then $G$ is distance regular if and only if $A_1^k \in \operatorname{span}\{A_0, A_1, \dots, A_d\}$ for every integer $k \geq 1$. More precisely, $G$ is DR if and only if this condition holds for $1 \leq k \leq 2d$.
    \end{theorem}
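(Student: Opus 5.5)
The forward direction uses the same Bose–Mesner argument as in the proof that every DR graph is DWR. If $G$ is DR, then $\mathcal{A}=\{A_0,\dots,A_d\}$ spans an algebra closed under multiplication. In particular $A_1A_i = b_{i-1}A_{i-1}+a_iA_i+c_{i+1}A_{i+1}$, and induction on $k$ gives $A_1^k\in\operatorname{span}(\mathcal{A})$ for every $k\ge 1$. This direction needs nothing new.

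For the converse I will only assume $A_1^k\in\operatorname{span}(\mathcal{A})$ for $1\le k\le 2d$. The plan is to extract the intersection numbers $b_i,a_i,c_i$ and show they are well defined. I would use the standard characterization (Brouwer–Cohen–Neumaier) that $G$ is DR exactly when, for every pair $u,v$ with $d(u,v)=i$, the numbers $c_i(u,v)=|\Gamma_{i-1}(u)\cap\Gamma_1(v)|$, $a_i(u,v)=|\Gamma_i(u)\cap\Gamma_1(v)|$ and $b_i(u,v)=|\Gamma_{i+1}(u)\cap\Gamma_1(v)|$ depend only on $i$.

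\textbf{Step 1 (regularity and the $c_i$).} From $A_1^2\in\operatorname{span}(\mathcal{A})$, the diagonal of $A_1^2$ is constant, so $G$ is $k$-regular. For $d(u,v)=i\le d$, write $(A_1^i)_{uv}=\alpha_i$, the number of shortest $u$–$v$ paths. Counting geodesics through the last step gives $\alpha_i = \sum_{w\in\Gamma_{i-1}(u)\cap\Gamma_1(v)}(A_1^{i-1})_{uw}=c_i(u,v)\,\alpha_{i-1}$, because every such $w$ is at distance $i-1$ from $u$ and so contributes the constant $\alpha_{i-1}$. Hence $c_i(u,v)=\alpha_i/\alpha_{i-1}$ is constant by induction on $i$.

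\textbf{Step 2 (the $a_i$, then the $b_i$).} Since $b_i=k-a_i-c_i$, it suffices to get $a_i$ constant. Look at $(A_1^{i+1})_{uv}$ for $d(u,v)=i$: walks of length $i+1$ decompose by the last edge $wv$, with $d(u,w)\in\{i-1,i,i+1\}$. The pieces with $d(u,w)=i+1$ contribute nothing, because a walk of length $i$ cannot reach distance $i+1$. So
\[
(A_1^{i+1})_{uv}=\sum_{w\in\Gamma_{i-1}(u)\cap\Gamma_1(v)}(A_1^{i})_{uw}+a_i(u,v)\,\alpha_i .
\]
The first sum runs over $c_i$ (constant) vertices, and each term $(A_1^i)_{uw}$ with $d(u,w)=i-1$ is constant by hypothesis, since $i\le d\le 2d$. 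The left side is constant by hypothesis, and $\alpha_i\neq0$, so $a_i(u,v)$ is constant. Induction on $i$ from $1$ to $d$ uses powers up to $A_1^{d+1}$, which lies within the allowed range $k\le 2d$ when $d\ge1$.

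\textbf{Main obstacle and the ``more precisely'' clause.} The step I expect to need care is the bookkeeping in Step 2: I must make sure that every quantity used, such as $(A_1^{i})_{uw}$ for $d(u,w)=i-1$, is invoked only for exponents within $1\le k\le 2d$. I must also verify that entries at the boundary distance $d$ do not require $A_1^{d+1}$ evaluated on pairs where $\Gamma_{i+1}$ is empty, which is harmless. In fact the argument above seems to need only $k\le d+1$, which is weaker than $2d$, so the stated range certainly suffices.
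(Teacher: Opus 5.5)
Your proof is correct, but it handles the converse by a different route from the paper. The paper argues algebraically in three steps:
\begin{itemize}
\item For $k\le d$ it takes Hadamard products of $A_1^k=\sum_i c_iA_i$ with the $A_i$. This gives $c_i=0$ for $i>k$ and $c_k>0$.
\item It then solves recursively for $A_k$ as a polynomial of degree $k$ in $A_1$, so $G$ is DP.
\item Finally, it writes every product $A_iA_j$ as a polynomial in $A_1$ of degree $i+j\le 2d$. The hypothesis places each such product in $\operatorname{span}\{A_0,\dots,A_d\}$, so this span is a Bose--Mesner algebra and $G$ is DR.
\end{itemize}

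You instead read the intersection numbers $c_i,a_i,b_i$ directly off walk counts. You get $c_i$ from the geodesic counts $\alpha_i$, and $a_i$ from the last-edge decomposition of $(A_1^{i+1})_{uv}$. You then invoke the standard Brouwer--Cohen--Neumaier characterization of distance-regularity by intersection arrays.

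The paper's route passes explicitly through the DP property and exhibits the algebra structure, which fits the hierarchy used throughout the paper. As written, however, it needs all products $A_iA_j$, and hence exponents up to $2d$.

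Your route is more elementary and gives a sharper statement. As you observe, only $k\le d+1$ is used. Even $A_1^{d+1}$ can be dropped: at $i=d$ one has $b_d=0$, so $a_d$ equals the valency minus $c_d$ with no further walk count. Hence for $d\ge 2$ the hypothesis for $1\le k\le d$ already forces distance-regularity, since regularity only needs $A_1^2$ and $2\le d$. The case $d=1$ is just the complete graph.
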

    \begin{proof}
    Suppose that $A_1^k \in \operatorname{span}\{A_0, A_1, \dots, A_d\}$. Then, for $k \leq d$, we can write:$$A_1^k = c_0 I + c_1 A_1 + \dots + c_k A_k + \dots + c_d A_d$$Taking the Hadamard product of both sides with $A_i$ for $i > k$, we obtain:$$A_1^k \circ A_i = c_i A_i$$-
    
    However, the left side is the zero matrix, $\mathbf{0}$, because there is no pair of vertices $u$ and $v$ at distance $i$ connected by a walk of length $k$ (since $k < i$). Consequently, $c_i = 0$ for every $i > k$.
    
    Moreover, $A_1^k \circ A_k = c_k A_k$. Since $k \leq d$, there is at least one pair of vertices $u, v$ such that $d(u,v) = k$, meaning there is a walk of length $k$ between them. Thus, $c_k > 0$.Then, for $k \leq d$, we can isolate $A_k$:$$A_k = \frac{1}{c_k}A_1^k - \left(\frac{c_0}{c_k}I + \frac{c_1}{c_k}A_1 + \frac{c_2}{c_k}A_2 + \cdots + \frac{c_{k-1}}{c_k}A_{k-1}\right)$$
    
    By strong induction, it is straightforward to see that $A_k = P_k(A_1)$, where $P_k$ is a polynomial of degree exactly $k$. This implies that $G$ is a DP graph.Now, let $0 \leq i, j \leq d$. Since $A_i$ and $A_j$ are polynomials in $A_1$, their product is also a polynomial in $A_1$:$$A_i A_j = P_i(A_1) P_j(A_1) = R(A_1)$$where $R$ is a polynomial of degree $i+j \leq 2d$. Thus, $A_i A_j$ can be expressed as a linear combination of powers of $A_1$: $$A_i A_j = a_0 I + a_1 A_1 + \cdots + a_{2d} A_1^{2d}$$
    Since we assumed $A_1^k \in \operatorname{span}\{A_0, A_1, \dots, A_d\}$ for all $1 \leq k \leq 2d$, it inherently follows that $A_i A_j \in \operatorname{span}\{A_0, A_1, \dots, A_d\}$.Therefore, the distance matrices span an algebra (the Bose-Mesner algebra), proving that $G$ is DR. The converse is trivial from the definition of distance-regularity, completing the proof.
    \end{proof}
    
    \begin{Remark}
    The upper bound of $2d$ in the theorem provides a significant computational advantage. Verifying the DR property through the standard definition requires computing and checking the span of $A_i A_j$ for all pairs, which involves $\mathcal{O}(d^2)$ matrix multiplications. In contrast, this characterization reduces the computational cost to evaluating exactly $2d$ powers of the adjacency matrix $A_1$.
    \end{Remark}


\section{Lexicographic Product}

In this section, we study the behavior of distance commutativity under graph operations. Specifically, we focus on the lexicographic product of graphs (also known as graph composition). Our primary objective is to extend and generalize a recent result established by Conde et al \cite{conde2026}, which states the following:
\begin{Proposition}[\cite{conde2026}, Proposition~4]
Let $G$ be a regular graph. Then $C_n[G]$ is a CDDR graph for any $n \geq 3$.
\end{Proposition}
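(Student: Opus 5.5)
The statement is that $C_n[G]$ is CDDR whenever $G$ is regular and $n\ge 3$. I would prove it by writing down every distance matrix of $C_n[G]$ explicitly as a Kronecker combination and then invoking the characterization \cite[Theorem 3]{conde2026}: a graph is CDDR if and only if its distance matrices pairwise commute.

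Let $m=|V(G)|$ and let $G$ be $r$-regular. In the lexicographic product $C_n[G]$, distances behave as follows.

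\begin{itemize}
\item For vertices $(a,x),(b,y)$ with $a\neq b$, the distance is $d_{C_n}(a,b)$. Any path in $C_n$ lifts, and the second coordinate is irrelevant across different fibres.
\item For $a=b$, the distance is $1$ if $x\sim y$ in $G$.
\item For $a=b$ and $x\neq y$ non-adjacent, the distance is $2$. Since $n\ge 3$, the vertex $a$ has a neighbour $a'$ in $C_n$, giving the path $(a,x),(a',z),(a,y)$ for any $z$, and no shorter path exists.
\end{itemize}

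This gives
\[
A_1(C_n[G]) = A_1(C_n)\otimes J_m + I_n\otimes A_1(G), \qquad
A_2(C_n[G]) = A_2(C_n)\otimes J_m + I_n\otimes \bigl(J_m - I_m - A_1(G)\bigr),
\]
and $A_k(C_n[G]) = A_k(C_n)\otimes J_m$ for $3\le k\le \operatorname{diam}(C_n)$. The diameter of $C_n[G]$ is $\max\{2,\lfloor n/2\rfloor\}$. For $n=3$ the cycle has diameter $1$, so $A_2(C_n)=0$, and the same formulas still hold.

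Every distance matrix of $C_n[G]$ therefore lies in the span of $\{X\otimes Y\}$, where $X$ ranges over the distance matrices of $C_n$ and $Y$ over $\{I_m, J_m, A_1(G)\}$. It suffices to show these generators commute pairwise, by the mixed product rule $(X\otimes Y)(X'\otimes Y')=XX'\otimes YY'$.

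\begin{itemize}
\item The first factors commute because $C_n$ is distance-regular, so its distance matrices lie in the commutative Bose--Mesner algebra.
\item For the second factors, the only nontrivial check is $A_1(G)J_m = J_mA_1(G)$. Both sides equal $rJ_m$ precisely because $G$ is $r$-regular.
\end{itemize}

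Hence all $A_k(C_n[G])$ commute, and \cite[Theorem 3]{conde2026} gives that $C_n[G]$ is CDDR. Connectedness of $C_n[G]$ holds even if $G$ is disconnected, because fibres are joined through the cycle.

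The main obstacle is justifying the distance description carefully. The key point is that two non-adjacent vertices in the same fibre are at distance exactly $2$, which uses $n\ge 3$, or more generally that the outer graph has no isolated vertex. Once this is settled the algebra is routine. This also suggests a generalization: for any distance-regular, or more broadly CDDR, connected graph $H$ on at least $2$ vertices and any regular $G$, the same formulas with $H$ in place of $C_n$ show that $H[G]$ is CDDR.
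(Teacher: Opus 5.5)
Your proof is correct and follows the paper's route. The paper proves the more general theorem that a lexicographic product with a CDDR outer graph (on at least two vertices) and a regular inner graph is CDDR, using the same Kronecker formulas for the distance matrices (its lemma on the lexicographic product), the mixed-product rule, and the commutation of the inner adjacency matrix with $J_m$; this is the generalization you sketch at the end. One harmless slip: the diameter is not always $\max\{2,\lfloor n/2\rfloor\}$, since $C_3[K_m]=K_{3m}$ has diameter $1$, but your formula then simply gives $A_2(C_n[G])=0$ and the argument is unaffected.
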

By analyzing the block structure of the distance matrices of the lexicographic product $G[H]$, we aim to relax the condition on the base graph from a cycle $C_n$ to any arbitrary CDDR graph, thereby providing a broader framework for constructing infinite families of distance-commutative graphs. 

In addition to the CDDR property, we also explore how this graph composition behaves with respect to other spectral and degree regularities. In particular, we determine whether the combination of a base graph and a regular factor graph remains stable under the Distance-Degree Regular (DDR), Distance-Matrix Regular (DMR), and Distance-Polynomial (DP) classifications.

It is well known that the adjacency matrix of the lexicographic product of two graphs, $G$ and $H$, can be expressed in terms of the Kronecker product as
$$A_1(G [H] ) = A_1(G) \otimes J_{|V(H)|} + I_{|V(G)|} \otimes A_1(H),$$

Then, a matrix reformulation of Theorem 2.5 in \cite{Bresar2019} gives us the following result.

\begin{lemma} \label{lem:dist_matrices_lexicographic}
Let $G$ and $H$ graphs, where $|V(G)| \geq 2$. Let $n = |V(G)|$ and $m = |V(H)|$. Then the $i$-th distance matrix of the lexicographic product $G[H]$ satisfies:
$$A_1(G[H]) = A_1(G) \otimes J_m + I_n \otimes A_1(H),$$
$$A_2(G[H]) = A_2(G) \otimes J_m + I_n \otimes (J_m - I_m - A_1(H)),$$
and for any distance layer $i \geq 3$,
$$A_i(G[H]) = A_i(G) \otimes J_m.$$
\end{lemma}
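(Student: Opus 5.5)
The plan is to first determine the distance function of $G[H]$ explicitly, and then to read off each distance matrix entrywise from the Kronecker expressions on the right-hand side. As throughout the paper, $G$ is taken to be connected. $H$ need not be connected; this is exactly what the $J_m - I_m - A_1(H)$ block is designed to handle. The claim I will establish is, for $(g_1,h_1),(g_2,h_2)\in V(G[H])$:
\begin{equation*}
d_{G[H]}((g_1,h_1),(g_2,h_2)) =
\begin{cases}
d_G(g_1,g_2) & \text{if } g_1\neq g_2,\\
1 & \text{if } g_1=g_2 \text{ and } h_1\sim h_2,\\
2 & \text{if } g_1=g_2,\ h_1\neq h_2,\ h_1\not\sim h_2,\\
0 & \text{if } (g_1,h_1)=(g_2,h_2).
\end{cases}
\end{equation*}

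For the case $g_1\neq g_2$ I argue in two directions.

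\emph{Lower bound.} Take any walk in $G[H]$ and project it to $G$ by $(x,y)\mapsto x$. Each step either joins $G$-adjacent first coordinates, or keeps the first coordinate fixed; the latter are exactly the $I_n\otimes A_1(H)$ edges. Deleting the steps of the second kind leaves a walk in $G$ from $g_1$ to $g_2$ that is no longer than the original. Hence the distance is at least $d_G(g_1,g_2)$.

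\emph{Upper bound.} Take a shortest path $g_1=x_0,x_1,\dots,x_k=g_2$ in $G$. The sequence $(x_0,h_1),(x_1,h_2),\dots,(x_k,h_2)$ is a path in $G[H]$, because the $A_1(G)\otimes J_m$ part joins $(x_t,y)$ to $(x_{t+1},y')$ for \emph{every} choice of $y,y'$. Hence the distance is exactly $d_G(g_1,g_2)$.

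For the case $g_1=g_2$ with $h_1\neq h_2$:
\begin{itemize}
\item If $h_1\sim h_2$, the two vertices are adjacent, so the distance is $1$.
\item If $h_1\not\sim h_2$, the vertices are not adjacent, so the distance is at least $2$. Because $G$ is connected and $|V(G)|\ge 2$, the vertex $g_1$ has a neighbour $g'$. For any $y\in V(H)$, the walk $(g_1,h_1),(g',y),(g_1,h_2)$ has length $2$, so the distance is exactly $2$.
\end{itemize}
The only delicate point in the whole argument is this use of $|V(G)|\ge 2$ and connectivity; it is the reason for the hypothesis. The projection argument in the lower bound is the other step that needs to be written carefully.

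It remains to compare entries. The $((g_1,h_1),(g_2,h_2))$ entry of $A_k(G)\otimes J_m$ equals $[d_G(g_1,g_2)=k]$. The corresponding entry of $I_n\otimes M$ equals $[g_1=g_2]\,M_{h_1h_2}$. Matching these against the distance formula gives the three identities:
\begin{itemize}
\item Distance $1$ arises either with $g_1\neq g_2$ and $d_G=1$, or with $g_1=g_2$ and $h_1\sim h_2$. This gives $A_1(G)\otimes J_m+I_n\otimes A_1(H)$.
\item Distance $2$ arises either with $g_1\neq g_2$ and $d_G=2$, or within a single fibre for distinct non-adjacent $h_1,h_2$. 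The $(h_1,h_2)$ entry of $J_m-I_m-A_1(H)$ is the indicator of the latter, giving the second formula.
\item For $i\ge 3$, only pairs with $g_1\neq g_2$ and $d_G(g_1,g_2)=i$ occur, giving $A_i(G)\otimes J_m$.
\end{itemize}
Since $A_i(G)=0$ whenever $i>\operatorname{diam}(G)$, the formulas remain valid (with the empty case) beyond the diameter.
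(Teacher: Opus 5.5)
Your proof is correct, and it takes a different route from the paper only in that the paper gives no argument at all. The paper presents the lemma as a Kronecker-product reformulation of Theorem 2.5 in \cite{Bresar2019}, the combinatorial description of exact distance graphs of the lexicographic product, and leaves the translation to the reader.

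You instead derive the distance function of $G[H]$ from first principles and then compare indicator entries of $A_k(G)\otimes J_m$ and $I_n\otimes M$. The derivation has three steps: the projection argument for the lower bound, lifting a shortest $G$-path with free second coordinates for the upper bound, and the two-step detour through a neighbour $g'$ of $g_1$ inside a fibre.

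What your route buys is a self-contained proof that shows exactly where each hypothesis enters. Connectivity of $G$ together with $|V(G)|\ge 2$ is used only to guarantee that every vertex of $G$ has a neighbour, which caps intra-fibre distances at $2$. Connectivity of $H$ is never used. Your entrywise comparison also makes visible that the two summands in each formula have disjoint supports, so the right-hand sides really are $0/1$ matrices.

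The citation route buys brevity, but it leaves unchecked that the cited graph-theoretic statement translates into these Kronecker identities. Your argument supplies exactly that check.
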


\begin{theorem}
    Let $G$ be a graph with $|V(G)| \geq 2$, and let $H$ be a regular graph. If $G$ is a DDR (respectively, CDDR) graph, then the lexicographic product $G[H]$ is also DDR (respectively, CDDR).
\end{theorem}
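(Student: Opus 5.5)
We will work entirely from the block formulas of Lemma \ref{lem:dist_matrices_lexicographic}, treating DDR and CDDR separately. Write $n=|V(G)|$ and $m=|V(H)|$, and let $H$ be $r$-regular. Since $|V(G)|\ge 2$ and $G$ is connected, the diameter of $G[H]$ is $\max\{\operatorname{diam}(G),2\}$. Note that if $\operatorname{diam}(G)=1$, the formula for $A_2(G[H])$ still holds with $A_2(G)=0$.

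\textbf{DDR.} It suffices to show that $\mathbf{1}_{nm}=\mathbf{1}_n\otimes\mathbf{1}_m$ is an eigenvector of every $A_i(G[H])$. Since $G$ is DDR, each $A_i(G)$ has constant row sum $k_i$, so $A_i(G)\mathbf{1}_n=k_i\mathbf{1}_n$. Regularity of $H$ gives $A_1(H)\mathbf{1}_m=r\mathbf{1}_m$, and of course $J_m\mathbf{1}_m=m\mathbf{1}_m$ and $I_m\mathbf{1}_m=\mathbf{1}_m$. Applying the mixed-product rule $(A\otimes B)(\mathbf{x}\otimes\mathbf{y})=A\mathbf{x}\otimes B\mathbf{y}$ to each formula in Lemma \ref{lem:dist_matrices_lexicographic}, we get
\begin{itemize}
\item eigenvalue $mk_1+r$ for $A_1(G[H])$,
\item eigenvalue $mk_2+(m-1-r)$ for $A_2(G[H])$,
\item eigenvalue $mk_i$ for $A_i(G[H])$ with $i\ge 3$.
\end{itemize}
Hence $G[H]$ is DDR.

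\textbf{CDDR.} By \cite[Theorem 3]{conde2026}, it suffices to show that the matrices $A_i(G[H])$ pairwise commute. Each of them lies in the set
\[
\mathcal{M}=\{X\otimes J_m+I_n\otimes Y:\ X\in\operatorname{span}(\mathcal{A}_G),\ Y\in\operatorname{span}\{I_m,J_m,A_1(H)\}\}.
\]
The plan is to multiply two general elements of $\mathcal{M}$ and check that the product is symmetric in the two factors. Expanding $(X\otimes J_m+I_n\otimes Y)(X'\otimes J_m+I_n\otimes Y')$ gives
\[
XX'\otimes J_m^2+X\otimes J_mY'+X'\otimes YJ_m+I_n\otimes YY'.
\]
We then check each term.
\begin{itemize}
\item $XX'=X'X$, because $G$ is CDDR.
\item $J_m^2=mJ_m$ is symmetric in the roles of the two factors.
\item $YY'=Y'Y$, because $I_m$, $J_m$ and $A_1(H)$ pairwise commute. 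This is the point where regularity of $H$ enters: $A_1(H)J_m=J_mA_1(H)=rJ_m$.
\item The cross terms $X\otimes J_mY'$ and $X'\otimes YJ_m$ need the most care, and we expect this to be the main obstacle. Here we will use that every $Y\in\operatorname{span}\{I_m,J_m,A_1(H)\}$ satisfies $J_mY=YJ_m=\sigma(Y)J_m$ for a scalar $\sigma(Y)$, again by regularity of $H$. The cross terms then become $\sigma(Y')X\otimes J_m+\sigma(Y)X'\otimes J_m$. Swapping the roles of $(X,Y)$ and $(X',Y')$ produces exactly the same sum.
\end{itemize}
So the product is symmetric in the two factors, and therefore $A_iA_j=A_jA_i$ for the distance matrices of $G[H]$. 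This proves that $G[H]$ is CDDR.

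The only remaining subtlety is the diameter bookkeeping when $\operatorname{diam}(G)=1$, that is, when $G=K_n$. In that case $A_2(G)=0$ and $A_i(G[H])=0$ for $i\ge3$, which is consistent with all the formulas used above, so no separate case analysis is needed.
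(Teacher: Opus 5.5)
Your proof is correct and follows the paper's argument. For DDR, $\mathbf{1}_n\otimes\mathbf{1}_m$ is a common eigenvector of the block formulas of Lemma~\ref{lem:dist_matrices_lexicographic}. For CDDR, commutativity follows from the mixed-product property together with the single fact that $A_1(H)$ commutes with $J_m$ when $H$ is regular; your explicit cross-term expansion simply writes out the paper's one-line claim that all tensor components commute.

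One harmless slip: $\operatorname{diam}(G[H])=\max\{\operatorname{diam}(G),2\}$ fails when both $G$ and $H$ are complete, because then $G[H]=K_{nm}$ has diameter $1$. Your argument handles the vanishing matrices uniformly, so nothing else changes.
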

\begin{proof}
    Suppose $G$ is DDR. Since $H$ is regular, $\mathbf{1}_m$ is a common eigenvector of both $A_1(H)$ and $J_m$. By Lemma \ref{lem:dist_matrices_lexicographic} and the mixed-product property of the Kronecker product, it follows directly that the global all-ones vector $\mathbf{1}_n \otimes \mathbf{1}_m$ is an eigenvector of $A_i(G[H])$ for all $i \geq 1$, proving $G[H]$ is DDR.

    Similarly, assume $G$ is CDDR. The regularity of $H$ ensures that $A_1(H)$ commutes with $J_m$ (and trivially with $I_m$). Since the distance matrices of $G$ mutually commute by hypothesis, all tensor components in Lemma \ref{lem:dist_matrices_lexicographic} mutually commute. Applying the mixed-product property yields that $A_i(G[H]) A_j(G[H]) = A_j(G[H]) A_i(G[H])$ for all valid distance layers $i, j$. Thus, $G[H]$ is CDDR.
\end{proof}

\begin{Corollary} \label{cor:eigenvalues_lexicographic_final}
    Let $G$ be a CDDR graph and $H$ be a regular graph on $m$ vertices with degree $k_H$, such that $S_G$ is the matrix that simultaneously diagonalizes $A_i(G)$ for all $1 \leq i \leq \operatorname{diam}(G)$, and $S_H$ is a matrix that diagonalizes $A_1(H)$. Then, $S_G \otimes S_H$ simultaneously diagonalizes $A_k(G[H])$ for all $1 \leq k \leq \operatorname{diam}(G[H])$.

    Moreover, let $\lambda_j^{(k)}$ be the eigenvalue of $A_k(G)$ associated with the eigenvector $\mathbf{x}_j$, and let $\mu_r$ be the eigenvalue of $A_1(H)$ associated with the eigenvector $\mathbf{y}_r$. Assume $\mathbf{y}_1 = \mathbf{1}_m$ (thus $\mu_1 = k_H$) and the remaining eigenvectors $\mathbf{y}_r$ (for $r \geq 2$) are orthogonal to $\mathbf{1}_m$. Then, the eigenvalue $\theta_{j,r}^{(k)}$ of the distance matrix $A_k(G[H])$ associated with the eigenvector $\mathbf{x}_j \otimes \mathbf{y}_r$ is given by:

    \begin{enumerate}
        \item For the adjacency matrix ($k = 1$):
        \begin{align*}
            \theta_{j,1}^{(1)} &= m \lambda_j^{(1)} + k_H \\
            \theta_{j,r}^{(1)} &= \mu_r \quad \text{for } r \geq 2
        \end{align*}

        \item For the second distance matrix ($k = 2$):
        \begin{align*}
            \theta_{j,1}^{(2)} &= m \lambda_j^{(2)} + m - 1 - k_H \\
            \theta_{j,r}^{(2)} &= -1 - \mu_r \quad \text{for } r \geq 2
        \end{align*}

        \item For long distances ($k \geq 3$):
        \begin{align*}
            \theta_{j,1}^{(k)} &= m \lambda_j^{(k)} \\
            \theta_{j,r}^{(k)} &= 0 \quad \text{for } r \geq 2
        \end{align*}
    \end{enumerate}
\end{Corollary}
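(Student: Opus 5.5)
The plan is to compute the action of each distance matrix from Lemma \ref{lem:dist_matrices_lexicographic} directly on the product vectors $\mathbf{x}_j \otimes \mathbf{y}_r$, using the mixed-product rule $(A\otimes B)(\mathbf{x}\otimes\mathbf{y}) = (A\mathbf{x})\otimes(B\mathbf{y})$. First I would fix the eigenbasis of $H$. Since $H$ is $k_H$-regular, $A_1(H)$ is real symmetric and commutes with $J_m$. So I can pick an orthogonal eigenbasis of $A_1(H)$ with $\mathbf{y}_1=\mathbf{1}_m$ and $\mathbf{y}_r\perp\mathbf{1}_m$ for $r\ge 2$, and take $S_H$ to have these columns. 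The key identities are then
\begin{equation*}
J_m\mathbf{y}_1 = m\mathbf{y}_1, \qquad J_m\mathbf{y}_r=0 \ (r\ge 2), \qquad I_m\mathbf{y}_r=\mathbf{y}_r, \qquad A_1(H)\mathbf{y}_r=\mu_r\mathbf{y}_r .
\end{equation*}
On the $G$ side, the columns $\mathbf{x}_j$ of $S_G$ satisfy $A_k(G)\mathbf{x}_j=\lambda_j^{(k)}\mathbf{x}_j$ for every $k$, with $A_0(G)=I_n$.

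Next I would evaluate each case. For $k=1$, we get $A_1(G[H])(\mathbf{x}_j\otimes\mathbf{y}_r) = \lambda_j^{(1)}\mathbf{x}_j\otimes J_m\mathbf{y}_r + \mathbf{x}_j\otimes \mu_r\mathbf{y}_r$. This gives $m\lambda_j^{(1)}+k_H$ when $r=1$ and $\mu_r$ when $r\ge2$. For $k=2$, the term $I_n\otimes(J_m-I_m-A_1(H))$ contributes $m-1-k_H$ when $r=1$ and $-1-\mu_r$ when $r\ge2$. The term $A_2(G)\otimes J_m$ contributes $m\lambda_j^{(2)}$ or $0$ respectively. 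For $k\ge3$, only $A_k(G)\otimes J_m$ appears, giving $m\lambda_j^{(k)}$ or $0$.

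Finally, the vectors $\mathbf{x}_j\otimes\mathbf{y}_r$ are exactly the columns of $S_G\otimes S_H$. This matrix is invertible, since $(S_G\otimes S_H)^{-1}=S_G^{-1}\otimes S_H^{-1}$. Because every column is a common eigenvector of all $A_k(G[H])$, the matrix $S_G\otimes S_H$ simultaneously diagonalizes them.

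The only delicate point is the choice of $S_H$. An arbitrary diagonalizing matrix of $A_1(H)$ need not diagonalize $J_m$. This happens if $k_H$ is a repeated eigenvalue, for instance when $H$ is disconnected. The fix is to take the eigenbasis adapted to $\mathbf{1}_m$, which is possible because $A_1(H)$ and $J_m$ commute and are both symmetric. This adapted choice is exactly what the statement's normalization assumes. A smaller point is that when $\operatorname{diam}(G)=1$ the $k=2$ formula still holds with $\lambda_j^{(2)}=0$, since $A_2(G)=0$.
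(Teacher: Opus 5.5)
Your proposal is correct and is the argument the paper leaves implicit: it applies the Kronecker formulas of Lemma~\ref{lem:dist_matrices_lexicographic} to $\mathbf{x}_j\otimes\mathbf{y}_r$ via the mixed-product rule, as the paper does for the strong product in Corollary~\ref{coro: autovalores de producto fuerte}. Your caveat about choosing $S_H$ adapted to $\mathbf{1}_m$ is sound. Under the paper's standing convention that all graphs are connected, $k_H$ is a simple eigenvalue and every eigenvector for $\mu\neq k_H$ is automatically orthogonal to $\mathbf{1}_m$, so any diagonalizing $S_H$ already has this form; your adapted choice additionally covers disconnected regular $H$.
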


\begin{Corollary}

    Let $G$ be a DP graph and $H$ be a regular graph on $m$ vertices with degree $k_H$, such that $m \lambda + k_H \notin \operatorname{Spec}(H)$
    for all $\lambda \in \operatorname{Spec}(G)$, then $G[H]$ is a DP graph.
\end{Corollary}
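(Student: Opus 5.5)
The plan is to apply Proposition~\ref{prop: caracterizacion de DP} to $G[H]$. The previous theorem already shows that $G[H]$ is CDDR, since a DP graph is CDDR and $H$ is regular. Corollary~\ref{cor:eigenvalues_lexicographic_final} then gives the simultaneous eigenbasis $\{\mathbf{x}_j\otimes\mathbf{y}_r\}$ and the explicit eigenvalues $\theta^{(k)}_{j,r}$. So we must check one thing: whenever two basis vectors give the same eigenvalue of $A_1(G[H])$, they also give the same eigenvalue of $A_k(G[H])$ for every $k$.

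We split pairs of basis vectors by their type: $(j,1)$ with $\mathbf{y}_1=\mathbf{1}_m$, or $(j,r)$ with $r\ge 2$.

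\emph{Case 1: both of type $(j,1)$ and $(j',1)$.} Equality $m\lambda_j^{(1)}+k_H=m\lambda_{j'}^{(1)}+k_H$ forces $\lambda_j^{(1)}=\lambda_{j'}^{(1)}$. Since $G$ is DP, Proposition~\ref{prop: caracterizacion de DP} gives $\lambda_j^{(k)}=\lambda_{j'}^{(k)}$ for all $k$. Hence $\theta^{(2)}$ and $\theta^{(k)}$ for $k\ge3$ agree, because each is an affine function of $\lambda^{(k)}$ with the same constants.

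\emph{Case 2: both of type $(j,r)$ and $(j',r')$ with $r,r'\ge2$.} Equality of $\theta^{(1)}$ means $\mu_r=\mu_{r'}$. Then $\theta^{(2)}=-1-\mu_r$ agrees, and $\theta^{(k)}=0$ for $k\ge3$ on both. No condition on $G$ is needed here.

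\emph{Case 3: the mixed case, $(j,1)$ against $(j',r)$ with $r\ge2$.} A collision would require $m\lambda_j^{(1)}+k_H=\mu_r\in\operatorname{Spec}(H)$ with $\lambda_j^{(1)}\in\operatorname{Spec}(G)$. The hypothesis $m\lambda+k_H\notin\operatorname{Spec}(H)$ excludes exactly this, so no mixed collision occurs.

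Combining the three cases, condition~2 of Proposition~\ref{prop: caracterizacion de DP} holds for $G[H]$, and therefore $G[H]$ is DP.

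The main point of care is Case 3, and specifically making sure the hypothesis covers all $\mu_r$ with $r\ge 2$. The hypothesis is stated over the whole of $\operatorname{Spec}(H)$, which includes $k_H$. It is therefore slightly stronger than needed: it also excludes $m\lambda+k_H=k_H$, i.e.\ $\lambda=0$, but this does no harm. One should also note that Corollary~\ref{cor:eigenvalues_lexicographic_final} requires $S_H$ to be an orthogonal-type basis with $\mathbf{y}_1=\mathbf{1}_m$. This is available because $H$ is regular and $A_1(H)$ is symmetric.

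Finally, the indexing in Proposition~\ref{prop: caracterizacion de DP} should be read as running over the diagonal positions of the eigenbasis, not over distances. With that reading, the case analysis above is exhaustive.
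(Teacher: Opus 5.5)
Your proof is correct and follows the same route as the paper. The hypothesis $m\lambda+k_H\notin\operatorname{Spec}(H)$ rules out collisions between the two families of eigenvalues of $A_1(G[H])$, and within-family collisions are handled by the explicit eigenvalues from the lexicographic-product corollary together with the DP characterization for CDDR graphs. Your Cases 1 and 2 simply make explicit the check for $A_2(G[H])$ and $A_k(G[H])$, $k\ge 3$, which the paper leaves as ``easy.''
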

\begin{proof}
    By Corollary \ref{cor:eigenvalues_lexicographic_final}, the eigenvalues of $A_1(G[H])$ are either of the form $m\lambda + k_H$ with $\lambda \in \operatorname{Spec}(G)$, or $\mu \in \operatorname{Spec}(H)$ (specifically those associated with eigenvectors orthogonal to the all-ones vector). 
    
    Suppose there is an equality of two eigenvalues in the adjacency matrix $A_1(G[H])$. Due to the non-resonance condition $m\lambda + k_H \notin \operatorname{Spec}(H)$, a cross-collision between these two distinct forms is impossible. Therefore, the equality must arise strictly from within the same family: either both eigenvalues come from $m\lambda + k_H = m\lambda' + k_H$ (which implies $\lambda = \lambda'$), or both come from $\mu = \mu'$.
    Then using Proposition \ref{prop: caracterizacion de DP} and that $G[H]$ is CDDR is easy to prove that $G[H]$ is DP.
 
\end{proof}

We now turn our attention to the DMR property. However, unlike the previous cases, preserving this regularity requires imposing stronger algebraic conditions on the factor graph $H$.

\begin{theorem}
    Let $G$ be a DMR graph and let $H$ be a regular graph such that every vertex is contained in the same number of triangles. Then, the lexicographic product $G[H]$ is also a DMR graph.
\end{theorem}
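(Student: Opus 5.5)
Since $G$ is DMR it is DDR, and $H$ is regular, so by the preceding theorem $G[H]$ is DDR. It therefore suffices to use the matrix criterion of Lemma~\ref{lem: caracterización de DMR con matrices}: we will show that $\mathbf{1}_n\otimes\mathbf{1}_m$ is an eigenvector of $A_i(G[H])A_j(G[H])\circ A_h(G[H])$ for all $0\le h,i,j\le \operatorname{diam}(G[H])$.

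\textbf{Step 1: expand into Kronecker pieces.} By Lemma~\ref{lem:dist_matrices_lexicographic}, each $A_i(G[H])$ is a sum of terms $X\otimes Y$. Here $X\in\{A_i(G),I_n\}$ and $Y\in\{J_m,\;I_m,\;A_1(H),\;J_m-I_m-A_1(H)\}$. Using the two mixed-product identities, $A_iA_j\circ A_h$ becomes a finite sum of terms
\[
(X_iX_j\circ X_h)\otimes(Y_iY_j\circ Y_h).
\]
Applied to $\mathbf{1}_n\otimes\mathbf{1}_m$, each term factors as $\big((X_iX_j\circ X_h)\mathbf{1}_n\big)\otimes\big((Y_iY_j\circ Y_h)\mathbf{1}_m\big)$. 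So it suffices that every factor sends the all-ones vector to a constant multiple of itself.

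\textbf{Step 2: the $G$-side factors.} If all three $X$'s are distance matrices of $G$, the factor is $A_{i}(G)A_{j}(G)\circ A_{h}(G)$, and $G$ being DMR gives the claim via Lemma~\ref{lem: caracterización de DMR con matrices}. If some of the $X$'s equal $I_n = A_0(G)$, the factor is still of the form $A_{i'}A_{j'}\circ A_{h'}$ with some indices zero, so the same lemma applies.

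\textbf{Step 3: the $H$-side factors.} Each $Y$ lies in the algebra generated by $I_m$, $J_m$ and $A:=A_1(H)$. Since $H$ is $k_H$-regular, $J_mA=AJ_m=k_HJ_m$, so every product $Y_iY_j$ is a linear combination of $I_m$, $J_m$, $A$ and $A^2$. Taking the Hadamard product with $Y_h$ then gives a linear combination of the following:
\begin{itemize}
\item $I_m\circ\cdot$, $J_m\circ\cdot$, $A\circ\cdot$ applied to $I_m,J_m,A$: these are $0$, $I_m$, $A$, $J_m$ or $A$ (using $A\circ I_m=0$ and $A\circ A=A$), all of which have constant row sums;
\item $A^2\circ I_m$, which is $k_HI_m$;
\item $A^2\circ J_m = A^2$, which has row sums $k_H^2$;
\item $A^2\circ A$, whose $(x,x)$ row sum is twice the number of triangles through $x$.
\end{itemize}
The last item is the only one not controlled by regularity alone, and the triangle hypothesis makes its row sums constant. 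Hence every $H$-side factor has $\mathbf{1}_m$ as an eigenvector.

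\textbf{Main obstacle.} The delicate point is Step 3: making sure the expansion of $Y_iY_j\circ Y_h$ produces no term beyond those listed. For instance, $(J_m-I_m-A)^2$ still reduces to the span of $I_m,J_m,A,A^2$, so the case $Y=J_m-I_m-A$ causes no new problem. I would handle this by noting that $\operatorname{span}\{I_m,J_m,A,A^2\}$ is closed under the products that occur. I would then check the Hadamard products with each of $I_m$, $J_m$, $A$ and $J_m-I_m-A$ against the list above. Once this is verified, combining Steps 2 and 3 gives the eigenvector condition for every $h,i,j$, and $G[H]$ is DMR.
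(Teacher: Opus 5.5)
Your proposal is correct and takes the same route as the paper: the Diego--Fiol eigenvector criterion applied to the Kronecker block form of the distance matrices of $G[H]$. The DMR property of $G$ handles the $G$-side factors, and regularity of $H$ together with the triangle condition (i.e.\ $\mathbf{1}_m$ being an eigenvector of $A_1(H)^2\circ A_1(H)$) handles the $H$-side factors. Your explicit case check in Step~3 is exactly the verification the paper compresses into ``it is easy to see''; the preliminary DDR step is harmless but unnecessary, since the criterion with $h=0$ already forces DDR.
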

\begin{proof}
    By hypothesis, every vertex of $H$ is contained in the same number of triangles, which is algebraically equivalent to stating that the all-ones vector $\mathbf{1}_{|V(H)|}$ is an eigenvector of the matrix $A_1(H)^2 \circ A_1(H)$. 

    Given this property, the regularity of $H$, and the block structure of the distance matrices for the lexicographic product from Lemma~\ref{lem:dist_matrices_lexicographic}, it is easy to see that since $G$ is a DMR graph, the global all-ones vector $\mathbf{1}_{V(G)} \otimes \mathbf{1}_{V(H)}$ is an eigenvector of $A_i(G[H])A_j(G[H]) \circ A_h(G[H])$ for all $h, i, j$. By Lemma~\ref{lem: caracterización de DMR con matrices}, this confirms that $G[H]$ is a DMR graph.
\end{proof}

\begin{theorem}
    If the lexicographic product $G[H]$ belongs to one of the classes DDR, CDDR or DP, then $G$ must also belong to that same class, and $H$ must be a regular graph.
\end{theorem}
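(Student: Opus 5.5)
Since DP implies CDDR implies DDR, I will first show that $G[H]$ DDR forces $G$ DDR and $H$ regular, then treat CDDR and DP. Throughout I use the block formulas of Lemma~\ref{lem:dist_matrices_lexicographic}. One case must be handled separately: if $G$ is complete, $G[H]$ has diameter at most $2$, and if moreover $H$ is complete, $G[H]\cong K_{nm}$ has diameter $1$.

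\textbf{Regularity of $H$ and DDR of $G$.} The degree of a vertex $(g,h)$ is $m\,|\Gamma_1(g)| + |\Gamma_1(h)|$, read off from the row sum of $A_1(G[H])$. Fix $g$ and vary $h$: DDR of $G[H]$ gives $|\Gamma_1(h)|$ constant, so $H$ is regular of some degree $k_H$. Fixing $h$ and varying $g$ then gives $|\Gamma_1(g)|$ constant. For the larger shells I compare row sums again.
\begin{itemize}
\item Layer $2$: $|\Gamma_2(g,h)| = m|\Gamma_2(g)| + (m-1-k_H)$, so $|\Gamma_2(g)|$ is constant.
\item Layers $i\ge 3$: $|\Gamma_i(g,h)| = m|\Gamma_i(g)|$, so $|\Gamma_i(g)|$ is constant.
\end{itemize}
Hence $G$ is DDR. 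For $|V(G)|\ge 2$ these formulas are valid, since $G$ is connected, and the diameter of $G[H]$ is $\max(\operatorname{diam}G,2)$ unless both factors are complete.

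\textbf{CDDR.} By the first step $H$ is regular, so $A_1(H)$ commutes with $J_m$ and $I_m$. I expand $A_i(G[H])A_j(G[H])$ by the mixed-product property and look at terms of the form $X\otimes J_m$. Terms $A_i(G)A_j(G)\otimes J_m^2 = m\,A_i(G)A_j(G)\otimes J_m$ appear. The cross terms have the form $A_i(G)\otimes J_m B$ with $B\in\{I, A_1(H), J-I-A_1(H)\}$, which equals a scalar times $A_i(G)\otimes J_m$; these are symmetric in $i,j$ as linear terms. The $I_n\otimes(\cdot)$ terms commute automatically. Subtracting $A_jA_i$ therefore leaves $m\,[A_i(G),A_j(G)]\otimes J_m = 0$, so $G$ is CDDR by \cite[Theorem 3]{conde2026}. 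The main bookkeeping is checking that the cross terms are symmetric. They are: for instance $A_i(G)\otimes J_mA_1(H) = k_H A_i(G)\otimes J_m$, and the term with the indices swapped contributes $k_H A_i(G)\otimes J_m$ as well, so these cancel in the commutator.

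\textbf{DP.} Since $G[H]$ is DP, it is CDDR, so $G$ is CDDR and $H$ is regular by the previous steps. Using Corollary~\ref{cor:eigenvalues_lexicographic_final}, the vectors $\mathbf{x}_j\otimes\mathbf{1}_m$ have eigenvalues $m\lambda_j^{(k)}+c_k$, where the constants $c_k$ depend only on $k$. Suppose $G$ is not DP. Proposition~\ref{prop: caracterizacion de DP} gives $j,l$ with $\lambda_j^{(1)}=\lambda_l^{(1)}$ but $\lambda_j^{(i)}\neq\lambda_l^{(i)}$ for some $i$. Then $\mathbf{x}_j\otimes\mathbf{1}_m$ and $\mathbf{x}_l\otimes\mathbf{1}_m$ share the same $A_1(G[H])$-eigenvalue but have distinct $A_i(G[H])$-eigenvalues. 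This contradicts Proposition~\ref{prop: caracterizacion de DP} applied to $G[H]$.

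The step I expect to be most delicate is the degenerate diameter case. If $G=K_n$, $G$ is trivially DR and hence DP, so only the regularity of $H$ matters, and the first-layer degree computation already proves it. The identification of eigenvalues via Corollary~\ref{cor:eigenvalues_lexicographic_final} needs regularity of $H$, which is why that is proved first.
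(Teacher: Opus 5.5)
Your proof is correct and follows essentially the paper's route. You get regularity of $H$ and then of $G$ from the degree formula $m|\Gamma_1(g)|+|\Gamma_1(h)|$, compare shell sizes layer by layer for DDR, and for DP use the eigenvectors $\mathbf{x}_j\otimes\mathbf{1}_m$, which is exactly the analogue of the strong-product converse the paper invokes.

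The one difference is the CDDR step. You use the identity $A_i(G[H])A_j(G[H])-A_j(G[H])A_i(G[H])=m\,(A_i(G)A_j(G)-A_j(G)A_i(G))\otimes J_m$ instead of counting $|\Gamma_i((u,x))\cap\Gamma_j((v,x))|$ for $i,j\ge 3$. This is the same computation in matrix form, but it handles the layers $i,j\le 2$ uniformly, which the paper only asserts are analogous.
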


\begin{proof}
    Since any DDR or CDDR graph is inherently regular, the regularity of $G[H]$ immediately forces both $G$ and $H$ to be regular graphs. Let $m = |V(H)|$.

    For DDR, the shell size for distance layers $i \geq 3$ follows the direct expansion:
    $$|\Gamma_i((u, x))| = m |\Gamma_i(u)|.$$
    Because $G[H]$ is DDR, the term $|\Gamma_i((u, x))|$ is independent of the chosen vertex $(u, x)$. Canceling the constant factor $m$ immediately isolates $|\Gamma_i(u)| = |\Gamma_i(v)|$ for any $u, v \in V(G)$. For the local cases $i \in \{1, 2\}$, the vertex independence follows analogously.

    For CDDR, the intersection of distance layers for $i, j \geq 3$ simplifies to the relation:
    $$|\Gamma_i((u, x)) \cap \Gamma_j((v, x))| = m |\Gamma_i(u) \cap \Gamma_j(v)|.$$
    Given that $G[H]$ is CDDR, this intersection size is symmetric with respect to the distance indices $i$ and $j$. Dividing both sides by the constant factor $m$ directly isolates the symmetry property $|\Gamma_i(u) \cap \Gamma_j(v)| = |\Gamma_j(u) \cap \Gamma_i(v)|$, confirming that $G$ is CDDR. The boundary intersections involving $i, j \leq 2$ follow analogously.

    Finally, for the DP class, the proof of the converse follows by total analogy to the strong product case. 
\end{proof}


\section{$i$-CDDR Graphs}

As motivated by the open problems discussed in the introduction, we now explore the structural consequences of relaxing the global commutativity conditions of CDDR graphs. This direction leads us to introduce the family of $i$-Commutative Distance Degree-Regular ($i$-CDDR) graphs.

While a CDDR graph requires the entire set of distance matrices $\mathcal{A}_G = \{A_0, A_1, \dots, A_d\}$ to be pairwise commutative, an $i$-CDDR graph demands only that a specific $i$-th distance matrix $A_i$ commutes with every other matrix in the set. This localization allows for a more granular analysis of the algebraic structure of the graph.

\begin{Definition}
A graph $G$ is \textbf{$i$-CDDR} if for any $u, v \in V(G)$ and any integer $1 \leq j \leq \operatorname{diam}(G)$, the following holds:$$|\Gamma_i(u) \cap \Gamma_j(v)| = |\Gamma_i(v) \cap \Gamma_j(u)|.$$

Equivalently, $A_i(G) A_j(G) = A_j(G) A_i(G)$ for all $0 \leq j \leq \operatorname{diam}(G)$.
\end{Definition}

It is immediate from the definition that a graph is CDDR if and only if it is $i$-CDDR for every $1 \leq i \leq \operatorname{diam}(G)$.

\begin{theorem}
    Let $G$ be an $i$-CDDR graph. If the $i$-th distance graph $G_i$ is connected, then $G$ is DDR.
\end{theorem}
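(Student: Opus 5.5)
The plan is to argue entirely with matrices. By the definition of $i$-CDDR, $A_i=A_i(G)$ commutes with every $A_j=A_j(G)$ for $0\le j\le d=\operatorname{diam}(G)$. We will show in two steps that the all-ones vector $\mathbf{1}$ is an eigenvector of every $A_j$. This is exactly the DDR condition, since $(A_j\mathbf{1})_u=|\Gamma_j(u)|$.

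\textbf{Step 1: $G_i$ is regular.} Since $\sum_{j=0}^{d}A_j=J$ and $A_i$ commutes with each summand, $A_iJ=JA_i$. Compare the $(u,v)$ entries of the two sides. The entry of $A_iJ$ is $|\Gamma_i(u)|$, the row sum of $A_i$ at $u$. The entry of $JA_i$ is $|\Gamma_i(v)|$, the column sum of $A_i$ at $v$. Hence $|\Gamma_i(u)|=|\Gamma_i(v)|$ for all $u,v$, so $G_i$ is $k_i$-regular for some $k_i\ge 1$, and $A_i\mathbf{1}=k_i\mathbf{1}$. This step uses no connectivity.

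\textbf{Step 2: each $A_j\mathbf{1}$ is constant.} Fix $j$ with $1\le j\le d$ and set $\mathbf{d}_j=A_j\mathbf{1}$. By commutativity,
\[
A_i\mathbf{d}_j=A_iA_j\mathbf{1}=A_jA_i\mathbf{1}=k_i\,\mathbf{d}_j,
\]
so $\mathbf{d}_j$ lies in the $k_i$-eigenspace of $A_i=A_1(G_i)$. Now use the hypothesis that $G_i$ is connected. For a connected $k_i$-regular graph, $k_i$ is the largest eigenvalue and it is simple, with eigenspace spanned by $\mathbf{1}$ (Perron--Frobenius; alternatively, an eigenvector for $k_i$ attains its maximum entry at some vertex, every neighbour in $G_i$ then attains it too, and connectivity propagates this to all vertices). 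Therefore $\mathbf{d}_j=c_j\mathbf{1}$ for some constant $c_j$, i.e.\ $|\Gamma_j(u)|$ does not depend on $u$. Since $j$ was arbitrary, $G$ is DDR.

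The main obstacle is Step 2: pinning $\mathbf{d}_j$ down to a multiple of $\mathbf{1}$ requires the eigenvalue $k_i$ of $A_i$ to have a one-dimensional eigenspace. This is precisely where connectivity of $G_i$ enters; if $G_i$ were disconnected, $A_j\mathbf{1}$ could take different constant values on different components of $G_i$. A secondary point to make explicit in the write-up is that Step 1 needs the commutation $A_iA_0=A_0A_i$ together with all $1\le j\le d$, so that $A_i$ commutes with $J$; the paper's equivalent formulation of $i$-CDDR, which ranges over $0\le j\le\operatorname{diam}(G)$, covers this.
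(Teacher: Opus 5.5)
Your proof is correct and follows the paper's argument essentially step for step. Commutation with $J=\sum_j A_j$ forces $G_i$ to be $k_i$-regular, and then $A_j\mathbf{1}$ lies in the $k_i$-eigenspace of $A_i$, which is spanned by $\mathbf{1}$ by Perron--Frobenius because $G_i$ is connected. Your entrywise comparison of $A_iJ$ and $JA_i$ only makes explicit a step the paper states without detail, and the commutation with $A_0=I$ that you flag is automatic.
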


\begin{proof}
    By definition, the distance matrix $A_i$ commutes with every distance matrix $A_j$ for $0 \leq j \leq d$. Consequently, $A_i$ commutes with the all-ones matrix $J = \sum_{k=0}^{d} A_k$. This commutativity ($A_i J = J A_i$) implies that $A_i$ has constant row sums, say $k_i$. Thus, $G_i$ is $k_i$-regular, and the all-ones vector $\mathbf{1}$ is an eigenvector of $A_i$ corresponding to the eigenvalue $k_i$.
    
    Now, for any distance matrix $A_j$, consider the vector $\mathbf{v} = A_j \mathbf{1}$. Utilizing the commutativity between $A_i$ and $A_j$, we obtain:
    $$A_i \mathbf{v} = A_i (A_j \mathbf{1}) = A_j (A_i \mathbf{1}) = A_j (k_i \mathbf{1}) = k_i \mathbf{v}$$
    Thus, $\mathbf{v}$ is an eigenvector of $A_i$ for the eigenvalue $k_i$. Since $G_i$ is connected by hypothesis, the matrix $A_i$ is non-negative and irreducible. By the Perron-Frobenius Theorem, the eigenspace associated with the spectral radius $k_i$ has multiplicity one and is spanned by $\mathbf{1}$.
    
      Therefore, the vector $\mathbf{v}$ must be a scalar multiple of $\mathbf{1}$, meaning $A_j \mathbf{1} = \lambda_j \mathbf{1}$ for some scalar $\lambda_j$. This demonstrates that every distance matrix $A_j$ has constant row sums, proving that $G$ is a DDR graph.
\end{proof}

It is worth noting that in the context of simple and connected graphs, the $1$-CDDR property is a sufficiently strong condition to imply DDR. However, the converse is strictly false.

\begin{figure}

\begin{center}
\begin{tikzpicture}[scale=1]
    
    \coordinate (v1) at (-2, 2);
    \coordinate (v2) at (2, 2);
    \coordinate (v3) at (4, 0);
    \coordinate (v4) at (2, -2);
    \coordinate (v5) at (-2, -2);
    \coordinate (v6) at (-4, 0);
    
    \coordinate (w1) at (-1, 1);
    \coordinate (w2) at (1, 1);
    \coordinate (w3) at (2, 0);
    \coordinate (w4) at (1, -1);
    \coordinate (w5) at (-1, -1);
    \coordinate (w6) at (-2, 0);
    
    \foreach \punto in {1,...,6}
        \fill (v\punto) circle(3pt);
	
	\foreach \punto in {1,...,6}
        \fill (w\punto) circle(3pt);
	
%

    \draw[thick] (v1) -- (v2);
    \draw[thick] (v2) -- (v3);
    \draw[thick] (v3) -- (v4);
    \draw[thick] (v4) -- (v5);
    \draw[thick] (v5) -- (v6);
    \draw[thick] (v6) -- (v1);
    
    \draw[thick] (w1) -- (w2);
    \draw[thick] (w2) -- (w3);
    \draw[thick] (w3) -- (w4);
    \draw[thick] (w4) -- (w5);
    \draw[thick] (w5) -- (w6);
    \draw[thick] (w6) -- (w1);
    
    \draw[thick] (w1) -- (v1);
    \draw[thick] (w2) -- (v2);
    
    \draw[thick] (w3) -- (v4);
    \draw[thick] (w4) -- (v3);
    
    \draw[thick] (w5) -- (v6);
    \draw[thick] (w6) -- (v5);
    
\end{tikzpicture}
\end{center}

\caption{A DDR graph which is not $i$-CDDR}\label{fig: DDR y no CDDR} 
\end{figure}

The graph presented in Figure \ref{fig: DDR y no CDDR}, which has diameter $d=3$, serves as a counterexample. An exhaustive calculation verifies that while this graph satisfies the DDR property, its distance matrices fail to commute globally. Specifically, none of the distance matrices commute with the entire set $\{A_0, \dots, A_d\}$ (e.g., $A_i A_j \neq A_j A_i$ for multiple pairs $i \neq j$). Consequently, this graph is DDR but fails to be $i$-CDDR for any $i \in \{1, 2, 3\}$. This explicitly demonstrates the strict inclusion $\{1\text{-CDDR}\} \subsetneq \{\text{DDR}\}$.

\begin{theorem}
    Let $G$ be a graph with $\operatorname{diam}(G) \leq 3$. Then $G$ is $1$-CDDR if and only if $G$ is CDDR.
\end{theorem}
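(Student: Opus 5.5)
One direction is immediate: a CDDR graph is $i$-CDDR for every $i$, in particular $1$-CDDR. For the converse I will use Theorem 3 of \cite{conde2026}, which says CDDR is equivalent to pairwise commutativity of $A_1,\dots,A_d$. So it suffices to show that $1$-CDDR with $d\le 3$ forces all distance matrices to commute. The cases $d=1,2$ are trivial. If $d\le 2$, the only nontrivial pair is $(A_1,A_2)$, and $1$-CDDR already gives $A_1A_2=A_2A_1$.

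For $d=3$ the only pair not covered by the hypothesis is $(A_2,A_3)$. I will exploit $\sum_{k=0}^{3}A_k=J$, so that $A_3=J-I-A_1-A_2$. Then
\[
A_2A_3-A_3A_2 = (A_2J-JA_2) - (A_2A_1-A_1A_2),
\]
since $I$ and $A_2$ commute with $A_2$. The second bracket vanishes by the $1$-CDDR hypothesis. It therefore remains to prove that $A_2$ commutes with $J$, that is, that $A_2$ has constant row sums (it is symmetric, so column sums then agree).

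This is the key step. I will use that $A_1$ commutes with $J$ (by summing $A_1A_j=A_jA_1$ over $j$). Hence $G$ is regular, and since $G$ is connected, $A_1$ is irreducible. As in the preceding theorem, for any $j$ set $\mathbf v=A_j\mathbf 1$. Then $A_1\mathbf v=A_jA_1\mathbf 1=k\mathbf v$, so by Perron--Frobenius $\mathbf v$ is a multiple of $\mathbf 1$. In particular $A_2\mathbf 1=k_2\mathbf 1$, so $A_2J=k_2J=JA_2$. In fact this is exactly the theorem just proved, applied with $i=1$ (since $G_1=G$ is connected), which gives directly that $G$ is DDR. Substituting back, $A_2A_3=A_3A_2$, and all pairs commute; by Theorem 3 of \cite{conde2026}, $G$ is CDDR.

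The main obstacle is only the $J$-commutation of $A_2$. It is handled by the DDR conclusion of the previous theorem, so I expect the argument to be short. I will also remark why the bound $d\le 3$ is needed. For $d=4$, the relation $A_4=J-I-A_1-A_2-A_3$ only reduces the question to the commutation of $A_2$ and $A_3$, which is not forced by the $1$-CDDR hypothesis.
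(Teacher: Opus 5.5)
Your proof is correct and follows the paper's argument: the $1$-CDDR hypothesis gives DDR via the preceding Perron--Frobenius theorem (since $G_1=G$ is connected), so $A_2$ commutes with $J$. Expanding $A_3=J-I-A_1-A_2$ then forces $A_2A_3=A_3A_2$. Your explicit treatment of $d\le 2$ and the commutator identity only spells out steps the paper leaves implicit.
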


\begin{proof}
    The reverse implication is trivial by definition. Suppose $G$ is a $1$-CDDR graph. To establish that $G$ is CDDR, it suffices to show that $A_2$ and $A_3$ commute. Because $G$ is $1$-CDDR, it is necessarily DDR, implying that every distance matrix commutes with the all-ones matrix $J = I + A_1 + A_2 + A_3$. Since $A_2$ trivially commutes with $J$, $I$, and $A_2$, and commutes with $A_1$ by the $1$-CDDR hypothesis, it must strictly commute with the only remaining term in the decomposition, $A_3$. Therefore, all distance matrices commute pairwise, and $G$ is CDDR.
\end{proof}

Following an exhaustive computational search, we identified a specific Cayley graph that separates the classes of CDDR and $1$-CDDR graphs.

\begin{Example}
    Consider the Cayley graph $G = \text{Cay}(S_5, S)$ generated over the symmetric group $S_5$ (acting on the elements $\{0, 1, 2, 3, 4\}$), with the following generating set:
    \begin{equation*}
    S = \{ (1324), (024)(13), (031)(24), (1243), (1342), (013)(24), (042)(13), (1423) \}
    \end{equation*}
    The resulting graph $G$ has diameter $d=5$. Computational analysis confirms that the adjacency matrix $A_1$ commutes with every distance matrix $A_i$ of the graph. However, the distance algebra is not commutative. Specifically, $A_2$ and $A_3$ do not commute. Furthermore, non-commutation also occurs between the following pairs: $A_2$ and $A_5$, $A_3$ and $A_4$, and $A_4$ and $A_5$. Moreover, $G$ is vertex-transitive so is DMR.

    It is worth noting that $S_5$ is non-abelian and, crucially, non-solvable: its composition series passes through $A_5$, the smallest non-abelian simple group. This structural property appears to be intimately connected with the failure of the implication 1-CDDR $\Rightarrow$ CDDR, suggesting that the solvability of the underlying group may play a decisive role in governing distance commutativity, a point we return to in Section 8.Furthermore, we observe that the generating set $S$ consists entirely of odd permutations. Consequently, the resulting graph is bipartite. Interestingly, this graph is also DWR..
    
    This result demonstrates that the class of CDDR graphs is strictly included in the class of $1$-CDDR graphs and so CDDR graphs is strictly included in the class of $i$-CDDR.
\end{Example}

\begin{theorem}
    Let $G$ and $H$  graphs. The following closure properties hold:
    \begin{enumerate}
        \item $G$ and $H$ are $1$-CDDR if and only if their strong product $G \boxtimes H$ is $1$-CDDR.
        \item $G$ and $H$ are $1$-CDDR if and only if their Cartesian product $G \square H$ is $1$-CDDR.
        \item If $G$ and $H$ are both $1$-CDDR and DWR, and at least one is non-bipartite, then their direct product $G \times H$ is $1$-CDDR.
        \item $G$ is $i$-CDDR and $H$ is regular if and only if $G[H]$ is $i$-CDDR. Where $|V(G)|\geq 2$.
    \end{enumerate}
\end{theorem}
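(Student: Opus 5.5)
For each product I will use the explicit distance-matrix formulas already available: Lemma \ref{lema: formula producto fuerte} for $\boxtimes$, Lemma \ref{lem:formula_cartesiana} for $\square$, Lemma \ref{lema: Ai span tensor} for $\times$, and Lemma \ref{lem:dist_matrices_lexicographic} for the lexicographic product. The common mechanism is the mixed-product rule $(A\otimes B)(C\otimes D)=AC\otimes BD$. If $A_1(G)$ commutes with every $A_k(G)$, and likewise for $H$, then every Kronecker monomial involving only $I$, $A_1$ and the distance matrices commutes with every element of $\operatorname{span}(\mathcal{A}_G\otimes\mathcal{A}_H)$ of the relevant form.

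\textbf{Cartesian product.} For the forward direction of (2), note that $A_1(G\square H)=A_1(G)\otimes I+I\otimes A_1(H)$. Each $A_i(G\square H)$ is a sum of terms $A_k(G)\otimes A_{i-k}(H)$. Each such term commutes with $A_1(G)\otimes I$ and with $I\otimes A_1(H)$ by the 1-CDDR hypothesis on the factors.

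\textbf{Strong product.} For (1), $A_1(G\boxtimes H)=A_1\otimes I+I\otimes A_1+A_1\otimes A_1$. The same termwise argument applies, using the formula $A_i(G\boxtimes H)=A_i(G)\otimes\sum_{k\le i}A_k(H)+\sum_{j<i}A_j(G)\otimes A_i(H)$.

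\textbf{Direct product.} For (3), Lemma \ref{lema: Ai span tensor} puts every $A_i(G\times H)$ in $\operatorname{span}(\mathcal{A}_G\otimes\mathcal{A}_H)$. Since $A_1(G\times H)=A_1(G)\otimes A_1(H)$, and $A_1(G)$ commutes with all of $\mathcal{A}_G$ (similarly for $H$), commutation follows termwise.

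\textbf{Lexicographic product, forward direction of (4).} Assume $G$ is $i$-CDDR and $H$ is regular. Then $A_1(H)$ commutes with $J_m$ and $I_m$. Also, $A_i(G)$ commutes with $I_n$ and with every $A_j(G)$. I will check the three shapes of $A_i(G[H])$ given by Lemma \ref{lem:dist_matrices_lexicographic}: $i=1$, $i=2$, and $i\ge3$. Each is built from these commuting blocks, so commutation follows against every $A_j(G[H])$. One caveat: for $i\ge3$ the product has $A_i(G[H])=A_i(G)\otimes J_m$, while $A_1(G[H])$ contains $I_n\otimes A_1(H)$. Commutation there needs $J_mA_1(H)=A_1(H)J_m$, which is exactly the regularity of $H$.

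\textbf{Converses.} For the converses in (1) and (2), I will mimic Theorems \ref{teo: reciproco cddr} and \ref{teo: reciprocos cartesiano} with the index fixed at $1$. Fix $x\in V(H)$ and compare the counts $|\Gamma_1(u,x)\cap\Gamma_j(v,x)|$ and $|\Gamma_1(v,x)\cap\Gamma_j(u,x)|$.

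For the Cartesian product, the convolution expansion with $k=0$ isolates $|\Gamma_1(u)\cap\Gamma_j(v)|$ plus a term $|\Gamma_1(x)|\,|\Gamma_0(u)\cap\Gamma_{j-1}(v)|$. That extra term is symmetric in $u,v$ because $\delta_{d(u,v),j-1}$ is symmetric.

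For the strong product, the expansion gives $|\Gamma_1(x)|\,|B_1(u)\cap\Gamma_j(v)|+(1+|\Gamma_1(x)|)|\Gamma_1(u)\cap\Gamma_j(v)|$ in the range where $j>1$ applies. Here $|B_1(u)\cap\Gamma_j(v)|=\delta_{d(u,v),j}+|\Gamma_1(u)\cap\Gamma_j(v)|$. The $\delta$ term is symmetric, so after cancellation $|\Gamma_1(u)\cap\Gamma_j(v)|=|\Gamma_1(v)\cap\Gamma_j(u)|$. I will also treat the $j=1$ case; it is automatically symmetric.

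\textbf{Converse of (4).} Regularity of $H$ follows because $1$-CDDR, and hence $i$-CDDR in the relevant case, forces DDR; so $G[H]$ is regular and $H$ is regular. For $i\ge3$, divide $|\Gamma_i((u,x))\cap\Gamma_j((v,x))|=m|\Gamma_i(u)\cap\Gamma_j(v)|$ by $m$. For $i\le2$ and small $j$, correction terms from $I_n\otimes(\cdot)$ appear; these are symmetric in $u,v$ when $u\neq v$.

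\textbf{Main obstacle.} I expect the hardest part to be the converse of (4) when $i\in\{1,2\}$, together with justifying that $G[H]$ being $i$-CDDR forces $H$ regular when $G_i$ is disconnected. The preceding DDR theorem needs $G_i$ connected. I would first try to avoid this by extracting regularity of $H$ directly from $A_iJ=JA_i$ restricted to a diagonal block $I_n\otimes(\cdot)$. That block gives $A_1(H)J_m=J_mA_1(H)$ whenever the $i$-th matrix has a nonzero $I_n$-component, i.e.\ for $i\le2$. For $i\ge3$ I would instead use commutation with $A_1(G[H])$ to get $A_i(G)\otimes J_mA_1(H)=A_i(G)\otimes A_1(H)J_m$, which yields regularity of $H$ since $A_i(G)\ne0$.
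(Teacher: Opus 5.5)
Your proposal is correct and follows essentially the paper's route. Like the paper, you get termwise commutation from the mixed-product rule applied to the explicit distance-matrix formulas for each product, and you read the converses off the index-$1$ shell-count expansions as in the CDDR converse theorems. You also supply details the paper leaves implicit, notably extracting regularity of $H$ in (4) without assuming that the $i$-th distance graph of $G[H]$ is connected.

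Two small points to tighten:
\begin{enumerate}
\item In the strong-product converse, the paper's convention gives $B_1(u)=\{u\}$, so $|B_1(u)\cap\Gamma_j(v)|=\delta_{d(u,v),j}$. Your identity double-counts, but this only changes the positive coefficient in front of $|\Gamma_1(u)\cap\Gamma_j(v)|$, so the cancellation still works.
\item For $i\ge 3$ in (4), before concluding $J_mA_1(H)=A_1(H)J_m$ you must first split off the term $m\bigl(A_i(G)A_1(G)-A_1(G)A_i(G)\bigr)\otimes J_m$. You can do this by comparing diagonal entries inside each block, since $J_mA_1(H)-A_1(H)J_m$ has zero diagonal.
\end{enumerate}
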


\begin{proof}
    The proofs follow by a completely analogous algebraic argument to those established for the CDDR property. Utilizing the structural formulas for the distance matrices of the respective products, the commutativity of the adjacency matrix $A_1$ distributes over the Kronecker and Hadamard operators. 
    Specifically for the lexicographic product, the regularity of $H$ ensures that its adjacency block commutes with the all-ones matrix, naturally reducing the commutativity of $A_i(G[H])$ directly to the $i$-CDDR property of the base graph $G$.
\end{proof}

The preceding theorem allows for the construction of infinite families of graphs that strictly separate the hierarchies of distance-regularity. By employing the strong product $\boxtimes$ and the Cartesian product $\square$ as lifting operators, we can propagate specific non-commutative behaviors while simultaneously controlling structural parameters such as order and diameter.

\begin{Corollary}
    For any $n, d \in \mathbb{N}$, there exist graphs $G$ with $|V(G)| \geq n$ and $\operatorname{diam}(G) \geq d$ such that:
    \begin{enumerate}
        \item $G$ is DDR but not $1$-CDDR,
        \item $G$ is $1$-CDDR but not CDDR.
    \end{enumerate}
    Furthermore, in both cases, the graph $G$ can be constructed to independently satisfy or fail the DMR and DWR properties.
\end{Corollary}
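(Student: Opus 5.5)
The plan is to start from the two seed graphs already exhibited, the DDR graph $Y$ of Figure~\ref{fig: DDR y no CDDR} (DDR, not $1$-CDDR) and the Cayley graph $X=\text{Cay}(S_5,S)$ ($1$-CDDR, not CDDR, vertex-transitive hence DMR, bipartite and DWR). We then lift them with $\boxtimes K_m$ and $\square K_2$ and track each property through the product theorems already proved. Order and diameter will be inflated by a large auxiliary factor: take $P=K_m$ (for order) strong-producted or Cartesian-producted with a long cycle $C_{2d}$ (for diameter), since $C_{2d}$ is DR and hence CDDR, DMR, DWR and $1$-CDDR.

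For item 1, set $G=Y\boxtimes C_{2d}\boxtimes K_m$. By Theorem~\ref{teo: clausura fuerte} it is DDR. It is not $1$-CDDR because, by part 1 of the last closure theorem, a strong product that is $1$-CDDR forces each factor to be $1$-CDDR, and $Y$ is not. For the diameter, $\operatorname{diam}(G)=\max\{\operatorname{diam}\text{ of factors}\}\ge d$, and $|V(G)|\ge m$. For item 2, set $G=X\boxtimes C_{2d}\boxtimes K_m$. It is $1$-CDDR by part 1 of that theorem. It is not CDDR, since otherwise Theorem~\ref{teo: reciproco cddr} would make $X$ CDDR.

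For the \emph{furthermore} clause, DWR is controlled by the final operation. Any such strong product with $K_m$, $m\ge2$, is DWR by Theorem~\ref{teo: condicion dwr}. Applying $\square K_2$ to a graph of odd girth $3$ (which $\boxtimes K_m$ guarantees) destroys DWR by Theorem~\ref{teo: condicion no DWR}. Part 2 of the closure theorem and the Cartesian results (Proposition~\ref{Prop: cddr square cddr = cddr}, Theorem~\ref{teo: reciprocos cartesiano}, together with closure of DDR/DMR under $\square$) keep DDR, $1$-CDDR, non-$1$-CDDR, non-CDDR and DMR status unchanged. DMR is controlled by the seed. To get DMR we need a DMR seed: $X$ is DMR, and for $Y$ we would check DMR via Proposition~\ref{lem: caracterización de DMR}. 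To get non-DMR, we strong-multiply by the graph $C$ of Figure~\ref{fig: CDDR y no DMR}, which is DP (so $1$-CDDR) but not DMR. By the DMR-converse theorem for $\boxtimes$, the product is not DMR, while by part 1 and Theorem~\ref{teo: reciproco cddr} the $1$-CDDR/non-CDDR statuses are unaffected.

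The main obstacle is producing all four DMR/DWR combinations in item 1, which needs a DDR, non-$1$-CDDR seed that \emph{is} DMR. I would first try to verify directly that $Y$ (which looks cubic and highly symmetric) is DMR by computing $\omega_{ii}(u)$ for $i=1,2,3$. If it fails, I would search for a vertex-transitive non-$1$-CDDR graph, for instance a non-abelian Cayley graph in the style of $X$, since vertex-transitivity gives DMR automatically. A secondary check is that $\square K_2$ does not accidentally make things $1$-CDDR. That follows from the ``only if'' direction of part 2.
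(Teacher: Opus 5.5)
You follow the lifting strategy the paper has in mind. You start from seed graphs, apply $\boxtimes C_{2d}\boxtimes K_m$ for order, diameter and DWR, apply $\square K_2$ to destroy DWR, and add a non-DMR factor to destroy DMR. The product bookkeeping is sound.

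The gap is the one you flag yourself, and your first attempt to close it fails: the graph $Y$ of Figure~\ref{fig: DDR y no CDDR} is \emph{not} DMR.
\begin{itemize}
\item $Y$ is DDR, with shells of sizes $1,3,5,3$ at every vertex.
\item $\Gamma_2(v_1)=\{v_3,w_2,v_5,w_5,w_6\}$ spans the two edges $v_5w_6$ and $w_5w_6$.
\item $\Gamma_2(v_3)=\{v_1,w_2,v_5,w_3,w_5\}$ spans only the edge $w_2w_3$.
\end{itemize}
Thus $\omega_{22}(v_1)=2\neq 1=\omega_{22}(v_3)$. The DMR average with $h=i=2$, $j=1$ equals $2\omega_{22}(u)/5$, so it depends on $u$. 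By the DMR converses for $\boxtimes$ and $\square$, every graph you build on $Y$ is non-DMR. As written, your proposal therefore produces no graph for item 1 that is DDR, DMR and not $1$-CDDR, and the fallback search is left undone. On the positive side, $Y$ already handles the non-DMR branch of item 1 on its own, so you do not need the factor $C$ there.

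The missing seed is already in the paper: the graph of Figure~\ref{fig: DMR y no CDDR}, which is the truncated tetrahedron $T$.
\begin{itemize}
\item $T$ is vertex-transitive, hence DDR and DMR.
\item $T$ has diameter $3$ and is not CDDR.
\item By the theorem that $1$-CDDR and CDDR coincide for diameter at most $3$, $T$ is not $1$-CDDR.
\end{itemize}
You can also check the last point directly. If $v$ is reached from $u$ by a triangle edge followed by an edge between triangles, then $d(u,v)=2$, while $|\Gamma_1(u)\cap\Gamma_2(v)|=1$ and $|\Gamma_2(u)\cap\Gamma_1(v)|=0$.

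Replace $Y$ by $T$ and use $T\boxtimes C_{2d}\boxtimes K_m$ and $(T\boxtimes C_{2d}\boxtimes K_m)\square K_2$. These give the two missing combinations (DMR with and without DWR), by exactly the closure and converse results you already invoke.
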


\begin{theorem}
    Let $G$ be a graph with $\operatorname{diam}(G)=d$ such that $|\Gamma_d(u)|=1$ for every $u \in V(G)$. Let $\sigma$ be the antipodal map. Then $\sigma \in \operatorname{Aut}(G)$ if and only if $G$ is $d$-CDDR. 
\end{theorem}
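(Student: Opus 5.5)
The plan is to reduce everything to the matrix identity $A_d = P_\sigma$, where $P_\sigma$ is the permutation matrix of the antipodal map. Then $d$-CDDR becomes the statement that $P_\sigma$ commutes with the distance matrices, and being an automorphism becomes the statement that $P_\sigma$ commutes with $A_1$.

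\textbf{Step 1: $\sigma$ is an involution and $A_d=P_\sigma$.} Since $|\Gamma_d(u)|=1$ for every $u$, the map $\sigma(u)$, defined as the unique vertex at distance $d$ from $u$, is well defined. Distance is symmetric, so $d(\sigma(u),u)=d$, and $u$ is the unique vertex at distance $d$ from $\sigma(u)$. Hence $\sigma(\sigma(u))=u$, and $\sigma$ is a bijective involution. The $u$-th row of $A_d$ has a single $1$, in column $\sigma(u)$. So $A_d = P_\sigma$ with $(P_\sigma)_{u,\sigma(u)}=1$, and $P_\sigma$ is symmetric and orthogonal with $P_\sigma^{-1}=P_\sigma^T=P_\sigma$. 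For any matrix $M$ indexed by $V(G)$ we have
\begin{equation*}
(P_\sigma M P_\sigma^{T})_{uv} = M_{\sigma(u)\sigma(v)}.
\end{equation*}

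\textbf{Step 2 ($\Rightarrow$).} Suppose $\sigma\in\operatorname{Aut}(G)$. Automorphisms preserve distances, so $d(\sigma(u),\sigma(v))=d(u,v)$ for all $u,v$. By the identity in Step 1, this gives $P_\sigma A_j P_\sigma^T = A_j$ for every $0\le j\le d$. Equivalently $A_dA_j = P_\sigma A_j = A_jP_\sigma = A_jA_d$, which is exactly the matrix form of the $d$-CDDR condition in the Definition.

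\textbf{Step 3 ($\Leftarrow$).} Suppose $G$ is $d$-CDDR. Taking $j=1$ gives $A_dA_1=A_1A_d$, that is, $P_\sigma A_1 P_\sigma^T=A_1$. Entrywise this says $(A_1)_{\sigma(u)\sigma(v)}=(A_1)_{uv}$, so $u\sim v$ if and only if $\sigma(u)\sim\sigma(v)$. Together with the bijectivity of $\sigma$ from Step 1, this shows $\sigma\in\operatorname{Aut}(G)$. Only the commutation with $A_1$ is needed here; the full $d$-CDDR condition then holds automatically by Step 2.

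I do not expect a real obstacle. The only point needing care is Step 1: the fact that $\sigma$ is a well-defined bijection with $A_d$ symmetric. This is what lets us use $P_\sigma^{-1}=P_\sigma^T$ and rewrite commutation as conjugation invariance. The degenerate case $d=1$ (which forces $G=K_2$) is covered by the same argument.
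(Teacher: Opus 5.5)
Your proposal is correct and follows essentially the same route as the paper: identify $A_d$ with the permutation matrix $P_\sigma$ of the involution $\sigma$, and translate ``$\sigma$ preserves distances'' into commutation of $P_\sigma$ with the distance matrices. You are slightly more careful than the paper in proving that $\sigma$ is an involution (the paper only asserts it) and in observing that the converse needs only commutation with $A_1$.
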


\begin{proof}
    Since $|\Gamma_d(u)|=1$ for all vertices, the $d$-th distance matrix $A_d$ corresponds exactly to the permutation matrix $P_\sigma$ of the antipodal map $\sigma$. The map $\sigma$ is an automorphism if and only if it preserves all graph distances; expressed algebraically, $P_\sigma A_i P_\sigma^T = A_i$ for all $1 \leq i \leq d$. Because the antipodal map is an involution ($\sigma^2 = \operatorname{id}$), we have $P_\sigma = P_\sigma^T = P_\sigma^{-1}$. Thus, the automorphism condition simplifies directly to $A_d A_i = A_i A_d$. This confirms that $\sigma \in \operatorname{Aut}(G)$ if and only if $A_d$ commutes with the entire distance algebra $\mathcal{A}(G)$, which is precisely the definition of a $d$-CDDR graph.
\end{proof}

\begin{figure}[h]
\begin{center}
\begin{tikzpicture}[scale=0.8]
    \foreach \i in {1,...,12} {
        \coordinate (v\i) at ({90-(\i-1)*30}:4);
    }
    
    \foreach \i in {1,...,12} {
        \coordinate (w\i) at ({90-(\i-1)*30}:2);
    }
    
    \foreach \punto in {1,...,12}
        \fill (v\punto) circle(3pt);
    
    \foreach \punto in {1,...,12}
        \fill (w\punto) circle(3pt);
        
    \draw[thick] (v1) -- (v2);
    \draw[thick] (v2) -- (v3);
    \draw[thick] (v3) -- (v4);
    \draw[thick] (v4) -- (v5);
    \draw[thick] (v5) -- (v6);
    \draw[thick] (v6) -- (v7);
    \draw[thick] (v7) -- (v8);
    \draw[thick] (v8) -- (v9);
    \draw[thick] (v9) -- (v10);
    \draw[thick] (v10) -- (v11);
    \draw[thick] (v11) -- (v12);
    \draw[thick] (v12) -- (v1);
    
    \foreach \i in {1,...,12} {
        \draw[thick] (v\i) -- (w\i);
    }
    
    \draw[thick] (w1) -- (w5) -- (w9) -- (w1);
    \draw[thick] (w2) -- (w6) -- (w10) -- (w2);
    \draw[thick] (w3) -- (w7) -- (w11) -- (w3);
    \draw[thick] (w4) -- (w8) -- (w12) -- (w4);

\end{tikzpicture}
\end{center}
\caption{The Generalized Petersen Graph $GP(12,4)$, a uniquely antipodal 5-CDDR graph.}\label{fig: GP124}
\end{figure}
The Generalized Petersen graph $GP(12, 4)$ (Figure \ref{fig: GP124}) serves as a definitive counterexample demonstrating that the $d$-CDDR property does not imply Distance-Degree Regularity (DDR). In this graph, the diameter distance matrix $A_5$ strictly commutes with all distance matrices $A_i$, satisfying the $5$-CDDR condition. However, the graph fails to satisfy global metric regularity: the all-ones vector $\mathbf{1}$ is explicitly not an eigenvector of $A_2$. The non-constant row sums of $A_2$ reflect the structural asymmetry between the outer cycle vertices and the internal triangular components. Thus, $GP(12,4)$ confirms that a graph can exhibit localized algebraic centralism at its diameter without being distance-degree regular.


\section{Cayley Graphs}

Cayley graphs offer a rich algebraic framework for studying distance-regularity, providing a lens through which we can explore group theory. It is a well-established result that circulant graphs—specifically, Cayley graphs over cyclic groups $\mathbb{Z}_n$—inherently belong to the class of CDDR graphs. In this section, we extend this structural analysis to broader families of groups, exploring the exact boundaries where distance commutativity begins to break.

To ensure the necessary structural baseline for our metric analysis, we recall the standard construction. Let $H$ be a finite group and $S \subseteq H$ be a generating set such that the identity element $e \notin S$ and $S$ is symmetric ($S = S^{-1}$). This guarantees that the resulting Cayley graph $\operatorname{Cay}(H,S)$ is connected, undirected, and loopless. Because every Cayley graph is vertex-transitive, it inherently satisfies the Distance Mean-Regular (DMR) property, providing an ideal foundation for testing higher-order metric and commutative regularities.

In the study of Cayley graphs, the inherent vertex-transitivity allows us to completely capture the metric structure of the graph by examining only the distance shells around the identity element $e$. Let $\Gamma_i(e) = \{h \in H \mid d(e,h) = i\}$ denote the set of elements at exactly distance $i$ from the identity.

By definition, the chosen generating set $S$ corresponds exactly to the first distance shell, meaning $S = \Gamma_1(e)$. A fundamental consequence of this group-theoretic structure is that for any distance $i \in \{1, \dots, \operatorname{diam}(G)\}$, the $i$-th distance graph $G_i$ is itself a Cayley graph over the same group $H$, generated precisely by the elements of $\Gamma_i(e)$. That is:$$G_i = \operatorname{Cay}(H, \Gamma_i(e))$$
This observation is analytically powerful. It establishes that the $i$-th distance matrix $A_i(G)$ is exactly the adjacency matrix of $\operatorname{Cay}(H, \Gamma_i(e))$. Consequently, verifying the commutative properties of distance matrices in $G$ is entirely equivalent to analyzing the algebraic interactions between the distance-generating sets $\Gamma_i(e)$ within the underlying group. With this structural framework established, we present the following theorem.

\begin{theorem}
Let $G = \operatorname{Cay}(H,S)$ and let $\mathcal{A} = \{A_0, A_1, \dots, A_d\}$ be the distance algebra. Then $A_i A_j = A_j A_i$ if and only if $\Gamma_i(e) \Gamma_j(e) = \Gamma_j(e) \Gamma_i(e)$, where $e$ is the identity element of $H$.
\end{theorem}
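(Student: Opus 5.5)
The plan is to translate both sides of the equivalence into statements about the group algebra $\mathbb{Z}[H]$. Throughout, I read the product $\Gamma_i(e)\Gamma_j(e)$ as a \emph{multiset}, that is, as the group-algebra element $\underline{\Gamma_i}\,\underline{\Gamma_j}$, where $\underline{X}=\sum_{x\in X}x$. This reading is forced: equality of the underlying sets is too weak to imply $A_iA_j=A_jA_i$, because commutation of $0/1$ matrices compares walk \emph{counts}, not just supports.

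\textbf{Step 1 (entries of $A_i$).} By the observation preceding the theorem, $A_i$ is the adjacency matrix of $G_i=\operatorname{Cay}(H,\Gamma_i(e))$. Vertex-transitivity via right multiplication gives $d(x,y)=d(e,yx^{-1})$, so $(A_i)_{x,y}=1$ if and only if $yx^{-1}\in\Gamma_i(e)$.

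\textbf{Step 2 (entries of $A_iA_j$).} Compute
\[
(A_iA_j)_{x,y}=\#\{z\in H: zx^{-1}\in\Gamma_i(e),\ yz^{-1}\in\Gamma_j(e)\}.
\]
Substitute $a=zx^{-1}$ and $b=yz^{-1}$. The map $z\mapsto(a,b)$ is a bijection onto the pairs $(a,b)\in\Gamma_i(e)\times\Gamma_j(e)$ with $ba=yx^{-1}$. Hence $(A_iA_j)_{x,y}=N_{j,i}(yx^{-1})$, where
\[
N_{j,i}(h)=\#\{(b,a)\in\Gamma_j(e)\times\Gamma_i(e): ba=h\}.
\]
This is the coefficient of $h$ in $\underline{\Gamma_j}\,\underline{\Gamma_i}$. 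Symmetrically, $(A_jA_i)_{x,y}=N_{i,j}(yx^{-1})$.

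\textbf{Step 3 (conclusion).} Since $(x,y)\mapsto yx^{-1}$ is onto $H$, we get $A_iA_j=A_jA_i$ if and only if $N_{i,j}(h)=N_{j,i}(h)$ for every $h\in H$. This is exactly $\underline{\Gamma_i}\,\underline{\Gamma_j}=\underline{\Gamma_j}\,\underline{\Gamma_i}$ in $\mathbb{Z}[H]$, i.e.\ $\Gamma_i(e)\Gamma_j(e)=\Gamma_j(e)\Gamma_i(e)$ as multisets.

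An alternative route is to use the left regular representation $\rho$. One checks that $A_i=\rho(\underline{\Gamma_i})$ up to the inversion $S=S^{-1}$, noting that each $\Gamma_i(e)$ is inverse-closed because distance is symmetric. Then, since $\rho$ is a faithful algebra homomorphism, commutation of the matrices is equivalent to commutation in $\mathbb{Z}[H]$.

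\textbf{Main obstacle.} The step needing care is the interpretation of the product, not the calculation itself. Two points must be checked. First, the order reversal ($ba$ rather than $ab$) produced by the convention $yx^{-1}\in S$ must be tracked correctly. It is harmless here, because the condition is symmetric in $i$ and $j$. Second, the statement must be made precise as a multiset or group-algebra identity. If the authors intend plain set equality, only the direction ``matrices commute $\Rightarrow$ sets are equal'' is valid: taking supports of equal multisets gives equal sets. For the converse, I would first check whether it can be recovered in special cases, for instance when every product $ab$ has a unique factorization. If it cannot, the statement should be stated with multiplicities.
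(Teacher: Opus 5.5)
Your proof is correct, and it proves the statement in the only form in which it is true. It departs from the paper at exactly the point that matters.

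\textbf{Comparison with the paper.} The paper reads $\Gamma_i(e)\Gamma_j(e)$ as a product of \emph{sets}. Its ``only if'' direction is your support argument. Its ``if'' direction goes as follows:
\begin{itemize}
\item take $x\in\Gamma_i(u)\cap\Gamma_j(v)$;
\item refactor $vu^{-1}=r_j^{-1}r_i=s_is_j$;
\item set $y=s_i^{-1}v$;
\item conclude $|\Gamma_i(u)\cap\Gamma_j(v)|\le|\Gamma_i(v)\cap\Gamma_j(u)|$.
\end{itemize}
The factorization $s_is_j$ is of the fixed element $vu^{-1}$, so $y$ need not depend on $x$ at all. No injection is produced. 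The argument only shows that one intersection is nonempty exactly when the other is, i.e.\ $\operatorname{supp}(A_iA_j)=\operatorname{supp}(A_jA_i)$.

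Your bijection $z\mapsto(zx^{-1},yz^{-1})$ computes the entries exactly, which is what the counting needs. It turns the statement into a genuine equivalence in $\mathbb{Z}[H]$. The paper's later consequences (the $i$-CDDR and CDDR criteria, and abelian $\Rightarrow$ CDDR) remain valid once products are read in $\mathbb{Z}[H]$.

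\textbf{The open question in your last paragraph.} With plain set equality the ``if'' direction is false, so there is nothing to recover in general. Take the paper's own graph $X$ of Figure~\ref{fig: DMR y no CDDR}, the truncated tetrahedron. It is $\operatorname{Cay}(A_4,S)$ with $S=\{(234),(243),(12)(34)\}$. Composing permutations right to left:
\begin{itemize}
\item $\Gamma_2(e)=\{(123),(132),(124),(142)\}$;
\item $\Gamma_3(e)=\{(134),(143),(13)(24),(14)(23)\}$.
\end{itemize}
Both set products $\Gamma_1(e)\Gamma_3(e)$ and $\Gamma_3(e)\Gamma_1(e)$ equal $\Gamma_2(e)\cup\Gamma_3(e)$. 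Yet $(123)$ has coefficient $2$ in $\underline{\Gamma_1}\,\underline{\Gamma_3}$, coming from $(234)(143)$ and $(243)(13)(24)$. It has coefficient $1$ in $\underline{\Gamma_3}\,\underline{\Gamma_1}$, which is the image of $\underline{\Gamma_1}\,\underline{\Gamma_3}$ under inversion. So by your Step~3, $A_1A_3\neq A_3A_1$. In vertex terms, the corresponding entries of $A_1A_3$ and $A_3A_1$ are $1$ and $2$.

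Your multiplicity formulation is therefore a necessary correction of the statement, not just a convenient reading. It also confirms your opening claim that set equality is too weak.
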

\begin{proof}
Suppose that $\Gamma_i(e) \Gamma_j(e) = \Gamma_j(e) \Gamma_i(e)$. We know that $(A_i A_j)_{uv} = |\Gamma_i(u) \cap \Gamma_j(v)|$. For each $x \in \Gamma_i(u) \cap \Gamma_j(v)$, we can write:$$x = r_i u = r_j v \quad \text{where } r_i \in \Gamma_i(e) \text{ and } r_j \in \Gamma_j(e).$$

From this, $v u^{-1} = r_j^{-1} r_i$. Since the graph is undirected, $r_j^{-1} \in \Gamma_j(e)$. Thus, $r_j^{-1} r_i \in \Gamma_j(e) \Gamma_i(e)$. By our hypothesis, we can rewrite this product as $r_j^{-1} r_i = s_i s_j$ for some $s_i \in \Gamma_i(e)$ and $s_j \in \Gamma_j(e)$. This yields:

$$v u^{-1} = s_i s_j \implies s_i^{-1} v = s_j u = y.$$

Since $s_i^{-1} \in \Gamma_i(e)$ and $s_j \in \Gamma_j(e)$, it follows that $y \in \Gamma_i(v) \cap \Gamma_j(u)$. This construction guarantees that for every vertex in the first intersection, there is a corresponding vertex in the second. Thus, $|\Gamma_i(u) \cap \Gamma_j(v)| \leq |\Gamma_i(v) \cap \Gamma_j(u)|$. By a symmetric argument, the reverse inequality holds, proving that $A_i A_j = A_j A_i$.

Conversely, suppose $A_i A_j = A_j A_i$. Let $r_i \in \Gamma_i(e)$ and $r_j \in \Gamma_j(e)$; we must show that $r_i r_j \in \Gamma_j(e) \Gamma_i(e)$. Consider the element $u = r_j \in \Gamma_j(e)$. Note that:

$$r_i^{-1}(r_i r_j) = r_j = u.$$

This implies $u \in \Gamma_i(r_i r_j) \cap \Gamma_j(e)$, meaning $|\Gamma_i(r_i r_j) \cap \Gamma_j(e)| \geq 1$. By the commutative hypothesis, we must have $|\Gamma_j(r_i r_j) \cap \Gamma_i(e)| \geq 1$. Thus, there exists some vertex $v \in \Gamma_j(r_i r_j) \cap \Gamma_i(e)$. 

Because $v \in \Gamma_i(e)$, we can write $v = s_i$ for some $s_i \in \Gamma_i(e)$. Because $v \in \Gamma_j(r_i r_j)$, there exists $s_j \in \Gamma_j(e)$ such that $s_j v = r_i r_j$. Substituting $v$, we obtain:

$$r_i r_j = s_j s_i \in \Gamma_j(e) \Gamma_i(e).$$

This shows $\Gamma_i(e) \Gamma_j(e) \subseteq \Gamma_j(e) \Gamma_i(e)$. By symmetric reasoning, the sets are equal, completing the proof.
\end{proof}

\begin{Corollary}
    Let $G = \operatorname{Cay}(H,S)$ be a Cayley graph. Then, $\Gamma$ is $i$-CDDR if and only if $\Gamma_i(e)\Gamma_j(e) = \Gamma_j(e)\Gamma_i(e)$ for every $0 \leq j \leq \operatorname{diam}(\Gamma)$. Analogously, $\Gamma$ is CDDR if and only if $\Gamma_i(e)\Gamma_j(e) = \Gamma_j(e)\Gamma_i(e)$ for every pair of distances $0 \leq i, j \leq \operatorname{diam}(G)$.
\end{Corollary}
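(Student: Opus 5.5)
This corollary follows by applying the preceding theorem pairwise and unwinding the definitions of $i$-CDDR and CDDR; the symbol $\Gamma$ in the statement denotes the Cayley graph $G=\operatorname{Cay}(H,S)$.

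First, the $i$-CDDR property. By the Definition of $i$-CDDR graphs, $G$ is $i$-CDDR exactly when $A_iA_j=A_jA_i$ for every $0\le j\le \operatorname{diam}(G)$. For each fixed $j$ in this range, the preceding theorem says that $A_iA_j=A_jA_i$ holds if and only if $\Gamma_i(e)\Gamma_j(e)=\Gamma_j(e)\Gamma_i(e)$. Taking the conjunction over all $j$ gives the first equivalence.

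The case $j=0$ needs a separate check, since the theorem was proved for distance shells with $j\ge 1$. Here $A_0=I$ commutes with everything, and $\Gamma_0(e)=\{e\}$ commutes with every subset of $H$. So the equivalence holds trivially at $j=0$.

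Second, the CDDR property. By \cite[Theorem 3]{conde2026}, $G$ is CDDR if and only if all pairs $A_i,A_j$ with $1\le i,j\le d$ commute. Equivalently, since $A_0=I$, all pairs with $0\le i,j\le d$ commute. Applying the preceding theorem to each pair $(i,j)$, with the trivial index-$0$ cases handled as above, gives the second equivalence. Alternatively, one can use the remark that a graph is CDDR if and only if it is $i$-CDDR for every $i$, and then invoke the first part for each $i$.

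The only point that needs care is that the preceding theorem's proof works for arbitrary indices, including $i=j$ and the zero index. Its argument uses only that each shell $\Gamma_k(e)$ is closed under inversion (because the graph is undirected) and that $\Gamma_k(u)=\Gamma_k(e)u$ by vertex-transitivity. Both facts hold for $k=0$ as well, so nothing in the argument breaks at the boundary indices, and the corollary follows.
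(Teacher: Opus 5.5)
Your reduction is the same as the paper's: the corollary is read off from the preceding theorem one pair $(i,j)$ at a time, and your handling of $j=0$ and of the CDDR clause is fine. The gap is the step you explicitly certify, namely the ``if'' half of that theorem. In its proof, $x\in\Gamma_i(u)\cap\Gamma_j(v)$ gives $vu^{-1}=r_j^{-1}r_i$. The set hypothesis then supplies \emph{some} factorization $vu^{-1}=s_is_j$, and one sets $y=s_ju$. But nothing guarantees as many factorizations $s_is_j$ as there are vertices $x$. So $x\mapsto y$ need not be injective, and the argument only shows that $\Gamma_i(u)\cap\Gamma_j(v)\neq\emptyset$ iff $\Gamma_i(v)\cap\Gamma_j(u)\neq\emptyset$. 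Precisely, $(A_iA_j)_{uv}$ is the number of factorizations $vu^{-1}=ca$ with $c\in\Gamma_j(e)$ and $a\in\Gamma_i(e)$. Hence $\Gamma_i(e)\Gamma_j(e)=\Gamma_j(e)\Gamma_i(e)$ only says that $A_iA_j$ and $A_jA_i$ have the same support. Checking the boundary indices does not touch this.

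The ``if'' directions of the corollary are in fact false. Let $H=\langle r,t\mid r^{13}=t^2=1,\ trt=r^{-1}\rangle$ and $S=\{t,tr,tr^3\}$, so the neighbours of $r^k$ are $tr^k,tr^{k+1},tr^{k+3}$. Breadth-first search from $e$ gives:
$\Gamma_1(e)=S$,
$\Gamma_2(e)=\{r^{\pm1},r^{\pm2},r^{\pm3}\}$,
$\Gamma_3(e)=\{tr^m : m\in\{2,4,5,6,10,11,12\}\}$,
$\Gamma_4(e)=\{r^4,\dots,r^9\}$,
$\Gamma_5(e)=\{tr^7,tr^8,tr^9\}$.

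The even shells are inverse-closed sets of rotations, so they commute setwise with every subset of $H$, since $r^m\,tr^a=tr^{a-m}$ and $tr^a\,r^m=tr^{a+m}$. For the odd shells, $tr^a\,tr^b=r^{b-a}$, and one finds
$\Gamma_1(e)\Gamma_3(e)=\Gamma_3(e)\Gamma_1(e)=\Gamma_3(e)\Gamma_5(e)=\Gamma_5(e)\Gamma_3(e)=\{r,\dots,r^{12}\}$ and $\Gamma_1(e)\Gamma_5(e)=\Gamma_5(e)\Gamma_1(e)=\{r^4,\dots,r^9\}$.
So the set condition holds for all $0\le i,j\le 5$. Yet $\Gamma_1(e)\cap\Gamma_3(r^5)=\{tr^3\}$ while $\Gamma_1(r^5)\cap\Gamma_3(e)=\{tr^5,tr^6\}$. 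Hence $A_1A_3\neq A_3A_1$, and the graph is neither $1$-CDDR nor CDDR.

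Only the ``only if'' directions hold as stated. Your reduction becomes a correct proof of a true statement if you replace the set identity by the group-ring identity $\sum_{a\in\Gamma_i(e),\,b\in\Gamma_j(e)}ab=\sum_{a\in\Gamma_j(e),\,b\in\Gamma_i(e)}ab$ in $\mathbb{Z}[H]$, that is, equal multiplicity of each $g\in H$. This identity is exactly $A_iA_j=A_jA_i$.
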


\begin{Corollary}
    Let $G=\operatorname{Cay}(H,S)$. If $H$ is Abelian then $G$ is CDDR.
\end{Corollary}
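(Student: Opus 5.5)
The plan is to reduce the statement to the set-theoretic criterion of the preceding Corollary: $G$ is CDDR if and only if $\Gamma_i(e)\Gamma_j(e) = \Gamma_j(e)\Gamma_i(e)$ for all $0 \le i,j \le \operatorname{diam}(G)$. So it suffices to prove that these complex products coincide whenever $H$ is Abelian.

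The key step is elementwise. Take any element $x \in \Gamma_i(e)\Gamma_j(e)$ and write it as $x = r_i r_j$ with $r_i \in \Gamma_i(e)$ and $r_j \in \Gamma_j(e)$. Because $H$ is Abelian, $r_i r_j = r_j r_i$, and the right-hand side is visibly an element of $\Gamma_j(e)\Gamma_i(e)$. This gives the inclusion $\Gamma_i(e)\Gamma_j(e) \subseteq \Gamma_j(e)\Gamma_i(e)$. Exchanging the roles of $i$ and $j$ gives the reverse inclusion, so the two sets are equal for every pair $i,j$. The indices $0$ are harmless, since $\Gamma_0(e)=\{e\}$ and multiplication by $\{e\}$ is trivially commutative.

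Invoking the preceding Corollary (equivalently, the theorem characterizing $A_iA_j=A_jA_i$ via $\Gamma_i(e)\Gamma_j(e)=\Gamma_j(e)\Gamma_i(e)$) then yields that all distance matrices commute pairwise. By \cite[Theorem 3]{conde2026}, this means $G$ is CDDR.

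I do not expect any real obstacle. The only point to be careful about is that the criterion must be applied to every pair $(i,j)$, including $i=j$, which is trivial. One could alternatively argue directly on matrices: for Abelian $H$, each $A_i$ is the image of the group-algebra element $\sum_{g\in\Gamma_i(e)} g$ under the right regular representation, and the group algebra of an Abelian group is commutative. However, the set-product route is the one that matches the paper's framework.
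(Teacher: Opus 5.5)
Your conclusion is true, and your main route is the one the paper intends: it states the corollary right after the set-product criterion, as an immediate consequence of it. That route has a real gap, however. It needs the direction $\Gamma_i(e)\Gamma_j(e)=\Gamma_j(e)\Gamma_i(e)\Rightarrow A_iA_j=A_jA_i$, and this direction is false.

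Via $w\mapsto (vw^{-1},wu^{-1})$, the entry $(A_iA_j)_{uv}=|\Gamma_i(u)\cap\Gamma_j(v)|$ equals the \emph{number} of pairs $(b,a)\in\Gamma_j(e)\times\Gamma_i(e)$ with $ba=vu^{-1}$. So commutation means the two products agree as multisets. Equality of the product sets only says that one count is nonzero exactly when the other is. The paper's argument for that direction only transfers nonemptiness: the $y$ it builds depends on a chosen factorization $vu^{-1}=s_is_j$, not on $x$, so no injection is obtained.

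Here is a concrete counterexample. Take $H=S_4$ and $S=\{s_1,s_2,s_3\}=\{(12),(23),(34)\}$. All generators are odd, so the graph is bipartite. Parity and the triangle inequality then give $\Gamma_1(e)\Gamma_2(e)=\Gamma_1(e)\cup\Gamma_3(e)=\Gamma_2(e)\Gamma_1(e)$. Now take $g=s_1s_3s_2\in\Gamma_3(e)$ and write $\ell(x)=d(e,x)$. Then $(A_1A_2)_{e,g}=|\{s\in S:\ell(gs)=2\}|=1$ (only $s_2$), while $(A_2A_1)_{e,g}=|\{s\in S:\ell(sg)=2\}|=2$ ($s_1$ and $s_3$). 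Hence $A_1A_2\neq A_2A_1$, even though the set criterion holds.

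The repair is already in your proposal. In the Abelian case your swap $(r_i,r_j)\mapsto(r_j,r_i)$ is more than an inclusion of sets. It is a bijection from $\{(a,b)\in\Gamma_i(e)\times\Gamma_j(e):ab=g\}$ onto $\{(b,a)\in\Gamma_j(e)\times\Gamma_i(e):ba=g\}$. So the two counts agree for every $g$, which by the formula above is exactly $|\Gamma_i(u)\cap\Gamma_j(v)|=|\Gamma_j(u)\cap\Gamma_i(v)|$ for all $u,v$.

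Equivalently, use your closing remark: each $A_i$ is the image of $\sum_{g\in\Gamma_i(e)}g$ under the regular (anti-)representation, and $\mathbb{C}[H]$ is commutative, so the $A_i$ commute. Make one of these the proof and drop the appeal to the set-product criterion.
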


\begin{theorem}
    Let $G=\operatorname{Cay}(H,S)$. Then $G$ is DWR if and only if for every $1 \leq i \leq \operatorname{diam}(G)$ and $k \geq 0$,
    $$S^k \cap \Gamma_i(e) = \Gamma_i(e) \quad \text{or} \quad S^k \cap \Gamma_i(e) = \emptyset.$$
\end{theorem}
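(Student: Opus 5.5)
The plan is to translate walks in $\operatorname{Cay}(H,S)$ into words over $S$, and then use vertex-transitivity to reduce the DWR condition to a statement about vertices starting at the identity. Write $S^k=\{s_1s_2\cdots s_k : s_t\in S\}$, with $S^0=\{e\}$. I will first check that with the adjacency convention $x\sim y \iff yx^{-1}\in S$, a walk $u=x_0,x_1,\dots,x_k=v$ corresponds exactly to a sequence $s_t=x_tx_{t-1}^{-1}\in S$ with $v=s_k\cdots s_1 u$. Hence a walk of length $k$ from $u$ to $v$ exists if and only if $vu^{-1}\in S^k$. Because $S=S^{-1}$, reversing the word shows that the ordering convention is harmless.

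The second step uses right multiplication. The map $x\mapsto xu^{-1}$ is a graph automorphism, since $(yu^{-1})(xu^{-1})^{-1}=yx^{-1}$. It therefore preserves distances, and $d(u,v)=d(e,vu^{-1})$. Consequently, for every pair $(u,v)$:
\begin{itemize}
\item the existence of a $k$-walk is the condition $vu^{-1}\in S^k$;
\item the distance is $i$ exactly when $vu^{-1}\in\Gamma_i(e)$.
\end{itemize}
As $(u,v)$ ranges over all pairs at distance $i$, the element $g=vu^{-1}$ ranges over all of $\Gamma_i(e)$. For example, one can take $u=e$ and $v=g$.

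With this, the equivalence reduces to set-theoretic bookkeeping. The graph $G$ is DWR exactly when, for each $k$, the support of $A_1^k$ is constant on every distance class. That is, for each $i$, either every $g\in\Gamma_i(e)$ lies in $S^k$ or none does, which is precisely $S^k\cap\Gamma_i(e)\in\{\Gamma_i(e),\emptyset\}$. For the forward direction, suppose $g,g'\in\Gamma_i(e)$ with $g\in S^k$. Apply DWR to the pairs $(e,g)$ and $(e,g')$, which are at the same distance $i$, to conclude $g'\in S^k$. For the reverse direction, given $(u,v)$ and $(u',v')$ at distance $i$, the elements $vu^{-1}$ and $v'u'^{-1}$ both lie in $\Gamma_i(e)$, so the dichotomy gives that both or neither lie in $S^k$. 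The case $i=0$ needs no hypothesis, because the class $\Gamma_0(e)=\{e\}$ is a singleton. This explains why the statement only ranges over $1\le i\le\operatorname{diam}(G)$.

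The only step requiring real care is the first: matching the left/right multiplication conventions, so that the walk condition reads $vu^{-1}\in S^k$ and not $u^{-1}v\in S^k$, and so that the automorphism used is right rather than left multiplication. I would fix the convention $x\sim y\iff yx^{-1}\in S$ from the paper's definition and verify both facts directly from it. Everything else is immediate.
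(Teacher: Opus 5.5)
Your proof is correct and matches the paper's argument. Both use right translations (vertex-transitivity) to reduce a pair at distance $i$ to a pair $(e,g)$ with $g=vu^{-1}\in\Gamma_i(e)$, identify the existence of a $k$-walk with $g\in S^k$, and read DWR off as the all-or-nothing dichotomy on $\Gamma_i(e)$; you simply make explicit the multiplication conventions and the trivial $i=0$ class, which the paper leaves implicit.
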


\begin{proof}
    By the vertex-transitivity of Cayley graphs, analyzing walks between arbitrary vertices $u, v$ at distance $i$ is equivalent to analyzing walks between the identity $e$ and $vu^{-1} \in \Gamma_i(e)$. The condition that a walk of length $k$ exists from $e$ to $x$ is algebraically equivalent to $x \in S^k$.
    
    If $G$ is DWR, the existence of a length-$k$ walk depends exclusively on the distance $i$. Thus, if $S^k$ contains at least one element of $\Gamma_i(e)$, it must contain all of them, yielding $S^k \cap \Gamma_i(e) = \Gamma_i(e)$. Otherwise, it contains none, yielding $\emptyset$. 
    
    Conversely, the provided intersection condition forces a binary outcome: either all elements in $\Gamma_i(e)$ belong to $S^k$, or none do. Translating this back to arbitrary vertex pairs, the existence of a walk of length $k$ depends strictly on their distance $i$, proving $G$ is DWR.
\end{proof}

\begin{center}
   \begin{figure}[h]
\begin{center}
\begin{tikzpicture}[scale=0.8]
    \foreach \i in {1,...,6} {
        \coordinate (v\i) at ({180-\i*60}:3);
    }
    
    \foreach \punto in {1,...,6} {
        \fill (v\punto) circle(3pt);
       
    }
        
    
    \draw[thick] (v1) -- (v3);
    \draw[thick] (v2) -- (v4);
    \draw[thick] (v3) -- (v5);
    \draw[thick] (v4) -- (v6);
    \draw[thick] (v5) -- (v1);
    \draw[thick] (v6) -- (v2);
    
    \draw[thick] (v1) -- (v4);
    \draw[thick] (v2) -- (v5);
    \draw[thick] (v3) -- (v6);

\end{tikzpicture}
\end{center}
\caption{A Cayley Graph on Abelian group $G=(\mathbb{Z}_6,\{2,3,4\})$ which is not DWR }\label{fig: complemento_C6}
\end{figure}

\end{center}

The preceding results characterize commutativity in terms of the layer set $\Gamma_i(e)$. A complementary sufficient condition, based on the symmetry of the generating set under conjugation, is given by the following.
A fundamental property of this construction is that the group $H$ acts on itself via right translations. For each $g \in H$, the mapping $\rho_g(x) = xg$ is a graph automorphism of $G$.Since right translations act as distance-preserving automorphisms, if $h \in H$ is at distance $i$ from the identity $e$, then $d(x, hx) = i$ for all $x \in H$. This ensures that the distance matrix $A_i$ can be decomposed as a sum of permutation matrices $P_h$ corresponding to the left regular representation of the group:$$A_i = \sum_{h \in \Gamma_i(e)} P_h$$

\begin{lemma}
Let $G = \operatorname{Cay}(H, S)$. If $h S h^{-1} = S$ for every $h \in \Gamma_i(e)$, then $G$ is $i$-CDDR.
\end{lemma}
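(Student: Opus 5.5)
The plan is to show directly that each permutation matrix $P_h$ with $h \in \Gamma_i(e)$ commutes with every distance matrix $A_j$. Then I use the decomposition $A_i = \sum_{h \in \Gamma_i(e)} P_h$ recorded just before the statement. This matrix route avoids having to worry about multiplicities in products of sets. The alternative would be to prove the set identity $\Gamma_i(e)\Gamma_j(e) = \Gamma_j(e)\Gamma_i(e)$ and invoke the preceding theorem and its corollary; I keep that as a fallback.

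\textbf{Step 1: conjugation by $h$ is a graph automorphism fixing $e$.} Fix $h \in \Gamma_i(e)$ and let $\varphi_h(x) = h x h^{-1}$, which is a group automorphism of $H$. With the paper's adjacency convention, $x \sim y$ iff $yx^{-1} \in S$. Since $\varphi_h(y)\varphi_h(x)^{-1} = h(yx^{-1})h^{-1}$ and $hSh^{-1} = S$ by hypothesis, $\varphi_h$ preserves adjacency in both directions. Hence $\varphi_h \in \operatorname{Aut}(G)$, and it fixes $e$. Automorphisms preserve distances, so $\varphi_h(\Gamma_j(e)) = \Gamma_j(e)$, i.e.
\begin{equation*}
h\,\Gamma_j(e)\,h^{-1} = \Gamma_j(e) \quad \text{for all } 0 \le j \le \operatorname{diam}(G).
\end{equation*}

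\textbf{Step 2: translate to matrices.} Write $A_j = \sum_{g \in \Gamma_j(e)} P_g$, where $g \mapsto P_g$ is the regular representation used above, so $P_h P_g P_h^{-1} = P_{hgh^{-1}}$. Step 1 says that $g \mapsto hgh^{-1}$ permutes $\Gamma_j(e)$ bijectively. Therefore
\begin{equation*}
P_h A_j P_h^{-1} = \sum_{g \in \Gamma_j(e)} P_{hgh^{-1}} = A_j ,
\end{equation*}
that is, $P_h A_j = A_j P_h$. Summing over $h \in \Gamma_i(e)$ gives $A_i A_j = A_j A_i$ for every $j$, which is exactly the $i$-CDDR condition.

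\textbf{Main obstacle.} The only delicate point is bookkeeping of conventions. I must check that the representation $h \mapsto P_h$ is compatible with the paper's convention $yx^{-1} \in S$ (left versus right translations). In particular, $A_j$ must really be the sum of the $P_g$ over $\Gamma_j(e)$, and $P_hP_gP_h^{-1}$ must equal $P_{hgh^{-1}}$ rather than $P_{h^{-1}gh}$. If the convention turns out reversed, nothing breaks. The hypothesis is applied to every $h \in \Gamma_i(e)$, and $\Gamma_i(e)$ is closed under inverses because the graph is undirected. So conjugation by $h^{-1}$ also preserves $S$ and every $\Gamma_j(e)$, and the same computation goes through.
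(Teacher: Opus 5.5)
Your proposal is correct and follows the same route as the paper: show that $P_h$ commutes with every $A_j$ for each $h\in\Gamma_i(e)$, then sum over the decomposition $A_i=\sum_{h\in\Gamma_i(e)}P_h$. The only difference is in how you get $P_hA_j=A_jP_h$. The paper observes directly that the left translation $x\mapsto hx$ is an automorphism; this map is your conjugation $\varphi_h$ followed by the right translation $x\mapsto xh$. That gives $P_hA_jP_h^{T}=A_j$ at once, without needing the identity $P_hP_gP_h^{-1}=P_{hgh^{-1}}$ or the convention check you flag.
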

\begin{proof}
Let $h \in \Gamma_i(e)$. The hypothesis $h S h^{-1} = S$ implies that the left translation $\sigma_h(x) = hx$ is also a graph automorphism of $G$. Since automorphisms preserve distances, we have $d(x, y) = d(hx, hy)$ for all $x, y \in V(G)$. In terms of permutation matrices, this structural symmetry is expressed as: $$P_h A_j P_h^T = A_j \quad \text{for every } 1 \leq j \leq \operatorname{diam}(G).$$
Since $P_h$ is an orthogonal matrix ($P_h^T = P_h^{-1}$), we obtain the direct commutativity relation $P_h A_j = A_j P_h$. Recalling that the distance matrix $A_i$ is the sum of the permutation matrices associated with the elements in $\Gamma_i(e)$, it linearly follows that: $$A_i A_j = \left( \sum_{h \in \Gamma_i(e)} P_h \right) A_j = \sum_{h \in \Gamma_i(e)} (P_h A_j) = \sum_{h \in \Gamma_i(e)} (A_j P_h) = A_j A_i.$$
This holds for all $1 \leq j \leq \operatorname{diam}(G)$, confirming that $G$ is $i$-CDDR.
\end{proof}

\begin{theorem}
Let $G = \operatorname{Cay}(H, S)$. If $h S h^{-1} = S$ for every $h \in S$, then $G$ is CDDR.
\end{theorem}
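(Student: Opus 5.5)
The plan is to reduce the theorem to the preceding lemma by showing that conjugation invariance of $S$ under its own elements propagates to every distance shell. First I will show that $hSh^{-1}=S$ for every $h\in S$ forces $gSg^{-1}=S$ for every $g\in H$.

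Since $S$ generates $H$, every $g\in H$ can be written as a product $g=s_1s_2\cdots s_r$ with $s_t\in S$. Inverses are not needed, because $S=S^{-1}$. The set $N=\{g\in H: gSg^{-1}=S\}$ is the normalizer of $S$, which is a subgroup of $H$. By hypothesis $S\subseteq N$, so $H=\langle S\rangle\subseteq N$. Concretely, one iterates $s_1\cdots s_r\,S\,s_r^{-1}\cdots s_1^{-1}=S$, peeling off one conjugation at a time.

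In particular $hSh^{-1}=S$ holds for every $h\in\Gamma_i(e)$, for every $1\le i\le \operatorname{diam}(G)$.

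Next I apply the preceding lemma for each $i$. It gives that $G$ is $i$-CDDR for every $i\in\{1,\dots,\operatorname{diam}(G)\}$. By the remark after the definition of $i$-CDDR graphs, a graph is CDDR exactly when it is $i$-CDDR for all $i$, so $G$ is CDDR.

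As an alternative check, one could argue through the Cayley commutation criterion proved above. Since $S$ is a union of conjugacy classes, so is each $S^k$, and hence so is each $\Gamma_i(e)=S^i\setminus\bigcup_{k<i}S^k$. Normal subsets $X,Y$ satisfy $XY=YX$, because $xy=(xyx^{-1})x$. The corollary then gives that $G$ is CDDR. I would mention this only as a remark.

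I expect no serious obstacle. The only point needing care is the passage from "normalized by generators" to "normalized by all of $H$." This relies on $S$ generating $H$, which is part of the standing Cayley graph convention, and on $N$ being closed under products. Both are immediate.
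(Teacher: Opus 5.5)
Your proposal is correct and follows the paper's proof essentially step for step. The paper likewise notes that the normalizer $N_H(S)$ is a subgroup containing the generating set $S$, hence equals $H$, and then applies the preceding lemma to every shell $\Gamma_i(e)$ to conclude that each $A_i$ commutes with every $A_j$. Your alternative check, which writes each $\Gamma_i(e)$ as a union of conjugacy classes and then uses the criterion $\Gamma_i(e)\Gamma_j(e)=\Gamma_j(e)\Gamma_i(e)$, is also valid, though it is not needed.
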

\begin{proof}

Let $N_H(S) = \{ g \in H \mid gSg^{-1} = S \}$ be the normalizer of the generating set $S$ in $H$. By hypothesis, $S \subseteq N_H(S)$. Since $N_H(S)$ is a subgroup of $H$ and $S$ generates $H$, it inherently follows that $H = \langle S \rangle \subseteq N_H(S)$.

This implies that $gSg^{-1} = S$ for every element $g \in H$. In particular, this invariant condition holds for every $h \in \Gamma_i(e)$ across all $1 \leq i \leq \operatorname{diam}(G)$. Applying the preceding lemma to each distance shell $i$, we conclude that every distance matrix $A_i$ commutes with every distance matrix $A_j$. Therefore, $G$ is a CDDR graph.

\end{proof}

\begin{Example}
Consider the symmetric group $S_n$ and let $T$ be the generating set consisting of all transpositions in $S_n$. It is a foundational result in group theory that conjugacy classes in $S_n$ are strictly determined by their cycle structure. Consequently, the set of all transpositions $T$ constitutes a single conjugacy class. This structural invariance implies that $g T g^{-1} = T$ for all $g \in S_n$. By the preceding theorem, it immediately follows that the Cayley graph $\operatorname{Cay}(S_n, T)$ is a CDDR graph for any $n \geq 2$.
\end{Example}

As illustrated in Figure \ref{fig: Mobius ladder}, the Möbius ladder serves as a concrete example of a graph that is CDDR and DMR, but strictly not DP. The following theorem generalizes this behavior to an infinite family of circulant graphs.

\begin{theorem}
    The Cayley Graph $G=\operatorname{Cay}(\mathbb{Z}_{4k},\{1,2k,4k-1\})$ is CDDR and not DP for every odd $k\geq 3$.
\end{theorem}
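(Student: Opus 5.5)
The plan is to obtain the CDDR property from the abelian-group corollary and then break DP with an explicit pair of characters, via Proposition~\ref{prop: caracterizacion de DP}.

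\textbf{CDDR.} Since $\mathbb{Z}_{4k}$ is abelian, the corollary above (abelian group $\Rightarrow$ CDDR) gives this immediately. The generating set $\{1,2k,4k-1\}$ is symmetric, does not contain $0$, and generates $\mathbb{Z}_{4k}$, so the Cayley graph is well defined and connected. It remains to show that $G$ is not DP.

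\textbf{Common eigenbasis.} I will work with the characters $\chi_t(x)=\omega^{tx}$, where $\omega=e^{2\pi \mathrm{i}/4k}$ and $t\in\mathbb{Z}_{4k}$. Every distance graph $G_j=\operatorname{Cay}(\mathbb{Z}_{4k},\Gamma_j(0))$ is a circulant, so these vectors simultaneously diagonalize all $A_j$. The eigenvalue of $A_j$ on $\chi_t$ is $\sum_{x\in\Gamma_j(0)}\omega^{tx}$. In particular the eigenvalue of $A_1$ is
\[
\theta_1(t)=2\cos\!\left(\tfrac{\pi t}{2k}\right)+(-1)^t.
\]

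\textbf{Distance shells.} Next I determine the shells. The graph is the cycle $C_{4k}$ together with its antipodal chords, i.e.\ a Möbius ladder. For $0\le x\le 2k$ one gets $d(0,x)=\min\{x,\,1+2k-x\}$, and the map $x\mapsto -x$ handles the other half. Hence the diameter is $d=k$ and
\[
\Gamma_k(0)=\{k,\;k+1,\;3k-1,\;3k\}.
\]
This step needs $k\ge 3$ so that these four elements are distinct and no other element reaches distance $k$. Checking these distance formulas carefully is the main technical point of the proof.

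\textbf{Spectral collision.} Take $t=k$, which is odd because $k$ is odd, and $t'=2k$, which is even.
\begin{itemize}
\item For $t=k$: $\theta_1(k)=2\cos(\pi/2)-1=-1$.
\item For $t'=2k$: $\theta_1(2k)=2\cos\pi+1=-1$.
\end{itemize}
So $\chi_k$ and $\chi_{2k}$ lie in the same eigenspace of $A_1$.

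Now compute their eigenvalues for $A_k$.
\begin{itemize}
\item On $\chi_{2k}$, where $\chi_{2k}(x)=(-1)^x$, the eigenvalue is $(-1)^k+(-1)^{k+1}+(-1)^{3k-1}+(-1)^{3k}=0$.
\item On $\chi_k$, where $\chi_k(x)=\mathrm{i}^x$, the eigenvalue is $\mathrm{i}^k+\mathrm{i}^{3k}+\mathrm{i}^{k+1}+\mathrm{i}^{3k-1}$. The two odd-exponent terms are complex conjugates of opposite sign and cancel. The two even-exponent terms are equal. The sum is therefore $2$ if $k\equiv 3\pmod 4$ and $-2$ if $k\equiv 1\pmod 4$, which is nonzero in either case.
\end{itemize}

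\textbf{Conclusion.} Two simultaneous eigenvectors share an eigenvalue of $A_1$ but have different eigenvalues for $A_k$. By Proposition~\ref{prop: caracterizacion de DP}, $G$ is not DP.
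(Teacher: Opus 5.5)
Your proof is correct and follows the paper's route. Both get CDDR from the abelian (circulant) corollary. Both then take the same two characters, $\mathrm{i}^x$ and $(-1)^x$ (the paper's $\mathbf{u_2}$ and $\mathbf{u_1}$), which share the $A_1$-eigenvalue $-1$, and conclude with Proposition~\ref{prop: caracterizacion de DP}.

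The only difference is the distance matrix used to separate the two characters. The paper uses $A_2$, whose shell is $\Gamma_2(0)=\{2,2k-1,2k+1,4k-2\}$, giving eigenvalues $0$ versus $-2$; this needs only local distance information. You use the diameter shell $\Gamma_k(0)$ instead, so you need the full distance formula $d(0,x)=\min\{x,2k+1-x\}$.
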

\begin{proof}
    We only have to prove that $G$ is not DP. We know that
    \begin{align*}
    A_1(G) &= \operatorname{circ}(0, \underbrace{1}_{1},0, \dots, 0, \underbrace{1}_{2k}, 0, \dots, 0, \underbrace{1}_{4k-1})=\text{circ}(\mathbf{v_1}) \\
    A_2(G) &= \operatorname{circ}(0, 0,\underbrace{1}_{2}, \dots, \underbrace{1}_{2k-1}, 0, \underbrace{1}_{2k+1}, \dots,  \underbrace{1}_{4k-2},0)=\text{circ}(\mathbf{v_2})
    \end{align*}
    And its well know that 
    \begin{align*}
        \mathbf{u_1}&=(1,-1,1,-1,\dots,1,-1,1,-1) \\
        \mathbf{u_2}&=(1,i,-1,-i,\dots,1,i,-1,-i)
    \end{align*}
    Are eigenvectors of $A_1(G)$ and $A_2(G)$. To know the eigenvalues we only have to make the inner products $\mathbf{v_i}.\mathbf{u}_j$. Using that $\mathbf{u_1}$ have $1$ in even positions and $-1$ in odd positions.
    \begin{equation*}
        \mathbf{v_1}\cdot \mathbf{u_1}= -1+1-1=-1 \quad  \mathbf{v_2}\cdot \mathbf{u_1}=1-1-1+1=0.
    \end{equation*}
    So $A_1(G)\mathbf{u_1}=-\mathbf{u_1}$ and $A_1(G)\mathbf{u_2}=\mathbf{0}$. For the other side, since $k$ is odd $\mathbf{u_2}$ has $-1$ in the position $2k$ then 
    \begin{equation*}
        \mathbf{v_2}\cdot \mathbf{u_1}=i-1-i=-1 \quad \mathbf{v_2}\cdot \mathbf{u_2}=-1+i-i-1=-2. 
    \end{equation*}
    So $A_1(G)\mathbf{u_1}=-\mathbf{u_1}$ and $A_2(G)\mathbf{u_2}=-2\mathbf{u_2}$. It follows for Proposition \ref{prop: caracterizacion de DP} that $G$ is not DP.

\end{proof}


\section{Conclusion}

In this work, we systematically studied how various distance-regularity classes are preserved under fundamental graph operations, specifically the Cartesian, Strong, Direct and Lexicographic products. Driven by the specific structural requirements of the direct product, we established a novel class of graphs—Distance Walk-Regular (DWR) graphs—and demonstrated their theoretical significance within the broader metric hierarchy. Furthermore, we have successfully characterized the class of Distance-Regular (DR) graphs by establishing an explicit condition on the distribution of walks between vertex pairs at fixed distances.

Furthermore, we introduced and characterized the family of $i$-CDDR graphs, offering a granular perspective on distance commutativity by relaxing global requirements. We also extended this structural analysis to the realm of group theory through Cayley graphs, providing explicit criteria for distance commutativity based on the algebraic interactions of distance-generating sets and their normalizers.

By leveraging these theoretical tools and product operations as lifting operators, we successfully constructed infinite examples of graphs that explicitly populate every possible intersection and symmetric difference of the studied regularity classes. To conclude, we present a graphical comparison of the different families of graphs studied in this work (see Fig \ref{fig: comparacion clases}). As a direct consequence of our constructions, no region in this structural hierarchy is empty; indeed, any specific combination of satisfying or strictly failing these regularity properties is instantiated by an infinite family of graphs.

\begin{figure}

\begin{center}
    \begin{tikzpicture}[scale=0.75, transform shape]
        \draw[thick] (-0.5,0) ellipse (1.5cm and 0.75cm);
        \node[] at (-0.5,0) {\textbf{DR}};
        \draw[thick] (1,0) ellipse (4.5cm and 2.25cm);
        \node[] at (4.5,0) {\textbf{CDDR}};
        %
        %
        \draw[thick] (0.5,0) ellipse (3cm and 1.5cm);
        \node[] at (2.5,0) {\textbf{DP}};
        %
        %
        %
        \draw[thick] (-2.5,0) ellipse (4cm and 1.75cm);
        \node[] at (-5.3,0) {\textbf{DMR}};
        %
        %
        %
        \draw[thick] (0,0) ellipse (8cm and 5cm);
        \node[] at (0,3) {\textbf{DDR}};
        %

        \draw[thick] (-0.5,-2.5) ellipse (3cm and 3.5cm);
        \node[] at (-0.5,-5.4) {\textbf{DWR}};

        \draw[thick] (1.5,0) ellipse (6cm and 2.5cm);
        \node[] at (6.4,0) {\textbf{1-CDDR}};
        
    \end{tikzpicture}
\end{center}
\caption{The inclusion relationships and structural hierarchy among the families}\label{fig: comparacion clases}
\end{figure}

\vspace{0.3cm}
\noindent\textbf{Open questions.}
This work leaves open several compelling avenues for future research, most notably regarding the interplay between group structure and commutativity. We conjecture that for Cayley graphs over solvable groups, the $1$-CDDR property might strictly imply CDDR, suggesting a structural rigidity imposed by the group's subnormal series. Furthermore, characterizing DWR graphs through purely spectral conditions remains an elusive challenge, as does the identification of minimal structural obstructions that prevent graph products from inheriting $i$-CDDR properties. We hope these questions provide a fruitful path for further exploring the boundaries of distance-regularity

\bibliographystyle{plain}

\end{document}